\theoremstyle{plain}
\newtheorem{theorem}{Theorem}[section]
\newtheorem{lemma}[theorem]{Lemma}
\newtheorem{proposition}[theorem]{Proposition}
\newtheorem{corollary}[theorem]{Corollary}
\theoremstyle{definition}
\newtheorem{remark}[theorem]{Remark}
\numberwithin{equation}{section}
\newcommand{\R}{\mathbb{R}}
\newcommand{\Z}{\mathbb{Z}}
\newcommand{\C}{\mathbb{C}}
\newcommand{\Q}{\mathbb{Q}}
\newcommand{\N}{\mathbb{N}}
\newcommand{\ol}{\overline}
\renewcommand{\Im}{\mathrm{Im}}
\renewcommand{\Re}{\mathrm{Re}}
\title[Renormalization of Translated Cone Exchange Transformations]{Renormalization of Translated Cone Exchange Transformations}
\author[Noah Cockram, Peter Ashwin and Ana Rodrigues]{Noah Cockram$^1$, Peter Ashwin$^1$ and Ana Rodrigues$^{1,2,3}$}
\address{$^1$ Department of Mathematics and Statistics, University of Exeter, Exeter EX4 4QE, UK} 
\address{$^2$ Departamento de Matem\'{a}tica, Escola de Ci\^{e}ncias  e Tecnologia, Universidade de \'{E}vora, Rua Rom\~{a}o Ramalho, 59, 7000--671 \'{E}vora, Portugal }
\address{$^3$ Centro de Investiga\c{c}\~{a}o em Matem\'{a}tica e Aplica\c{c}\~{o}es, Rua Rom\~{a}o  Ramalho, 59, 7000--671 \'{E}vora, Portugal} 
\begin{document}

\begin{abstract}
In this paper, we investigate a class of non-invertible piecewise isometries on the upper half-plane known as Translated Cone Exchanges.  These maps include a simple interval exchange on a boundary we call the baseline. We provide a geometric construction for the first return map to a neighbourhood of the vertex of the middle cone for a large class of parameters, then we show a recurrence in the first return map tied to Diophantine properties of the parameters, and subsequently prove the infinite renormalizability of the first return map for these parameters.
\end{abstract}

\maketitle

\section{Introduction}

\textit{Piecewise isometries (PWIs)} are a class of maps that can be generally described as a ``cutting-and-shuffling'' action of a metric space, specifically a partitioning of the phase space into at most countably many convex pieces called \textit{atoms}, which are each moved according to an isometry.  The phase space of these maps can be partitioned into two (or three) subsets based on the dynamics -- a polygon or disc packing of periodic islands known as the \textit{regular set}, and its complement, the set of points whose orbit either lands on, or accumulates on, the discontinuity set.  Some authors choose to further distinguish those points in the pre-images of the discontinuity and those points which accumulate on it.  
The most well-known and well-understood examples of such maps are the \textit{interval exchange transformations (IETs)}, which arise as return maps to cross-sections of some measured foliations \cite{M1} and also as generalisations of circle rotations \cite{BC, Z1, LM} and their encoding spaces generalise Sturmian shifts \cite{FZ1}.  Furthermore, interval exchanges which aren't irrational rotations are known to be almost always weakly mixing \cite{AG1} but never strongly mixing \cite{KA}.  Piecewise isometries in general, however, are not as well-known and as a subset of this class, interval exchanges are in many ways exceptional, due in part to being one-dimensional, as well as the invariance of Lebesgue measure.

In the more general setting, although the inherent lack of hyperbolicity restricts the variety of possible behaviours, for example it is known that all piecewise isometries have zero topological entropy \cite{B01}, piecewise isometries are still capable of quite complex behaviour; many examples show the presence of unbounded periodicity and an underlying renormalizability which structures the dynamics near the discontinuities \cite{GA, GP, AKT, LKV, AG05, AG06, P06}; numerical evidence suggests the existence of invariant curves in the exceptional set which seem fractal-like and form barriers to ergodicity \cite{AG05, AG06, AGPR, LOUL}; there are conjectured conditions for piecewise isometries to have sensitive dependence on initial conditions \cite{BK09}.

Renormalization in theoretical physics and nonlinear dynamical systems has a longstanding history, see for example \cite{W83, CT78, F78, F79, M94, R79, V82}, driven by the problem of understanding phenomena that occur simultaneously at many spatial and temporal scales, particularly near phase transitions, periodic points, or in the case of piecewise isometries, the set of discontinuities.

In this paper, we investigate the renormalizability of a class of piecewise isometries called Translated Cone Exchanges on the closure of the upper half-plane $\ol{\mathbb{H}}$.  This family of maps was introduced in \cite{AGPR} and has since been investigated in \cite{P19,PR18}. In particular, we use a geometric construction to describe the action of a first return map to a subset containing the origin, and show that this map displays renormalizable behaviour locally to the origin in accordance with Diophantine approximation of one of its parameters. These results go beyond \cite{P19,PR18} in that they are much less constrained in the continued fraction expansion associated with the baseline translation.

This paper is organized as follows. In Section 2, we introduce the family of maps we will investigate, namely, Translated Cone Exchange transformations. In Section 3 we will develop some tools that will be useful in the next section. Section 4 presents the preliminary results that lead to the main result of this paper, Theorem \ref{Renorm}, which gives an explicit form of renormalization for the first return maps of maps in our class to a neighbourhood of 0. Finally, in Section 5 we present an example for fixed values of the parameters.

\section{Translated Cone Exchange transformations}

Let $\mathbb{H} \subset \C$ denote the upper half plane, and let $\ol{\mathbb{H}}$ be its closure in $\C$, that is
\begin{equation*}
\ol{\mathbb{H}} = \{z \in \C: \Im(z) \geq 0\}.
\end{equation*}
A \textit{Translated Cone Exchange transformation (TCE)} \cite{AGPR} is a PWI $(\mathcal{C}, F_\kappa)$ defined on the closed upper half plane $\ol{\mathbb{H}}$.  For any integer $d > 0$, let $\mathbb{B}^{d+2}$ be the set
\begin{equation*}
\mathbb{B}^{d+2} = \left\{ \alpha = (\alpha_0,...,\alpha_{d+1}) \in (0, \pi)^{d+2} : \sum_{j=0}^{d+1} \alpha_j = \pi \right\},
\end{equation*}
Next, for some $\alpha = (\alpha_0, ..., \alpha_{d+1}) \in \mathbb{B}^{d+2}$, partition the interval $[0,\pi]$ by subintervals
\begin{equation*}
W_j =	\begin{cases}
		[0, \alpha_0),												& \text{ if } j = 0, \\[1.5mm]
		[\alpha_0, \alpha_0+\alpha_1],									& \text{ if } j = 1, \\[1.5mm]
		\left( \sum \limits_{k=0}^{j-1} \alpha_k, \sum_{k=0}^j \alpha_k \right]	,	& \text{ if } j \in \{2,...,d+1\}.
		\end{cases}
\end{equation*}
We then define the partition $\mathcal{C}$ as
\begin{equation*}
\mathcal{C} = \left\{ C_j : j \in \{0, ..., d+1\} \right\},
\end{equation*}
where 
\begin{align*}
C_1 = \{0\} \cup \{ z \in \mathbb{H} : \operatorname{Arg} z \in W_1 \} \\
\intertext{and}
C_j = \{ z \in \ol{\mathbb{H}} : \operatorname{Arg} z \in W_j\} \text{ for $j \neq 1$}. \\
\end{align*}

\begin{figure}
\centering
\includegraphics[width=0.7\linewidth]{{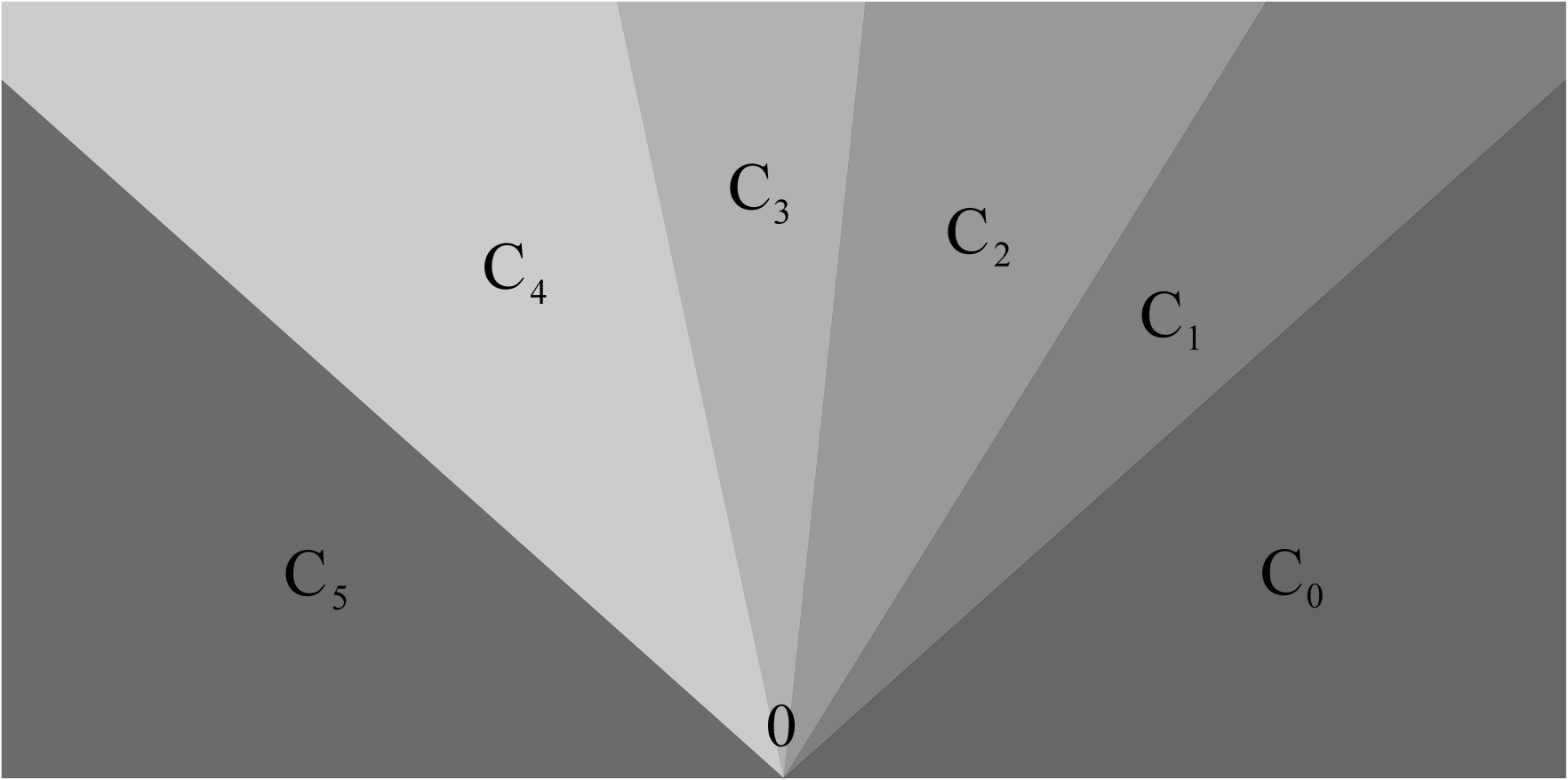}}
\caption{An example of a partition of the closed upper half plane $\overline{\mathbb{H}}$ into 6 cones.}
\label{P}
\end{figure}

The mapping $F_\kappa$ is defined as a composition
\begin{equation}
\label{F}
F_\kappa(z) = G \circ E(z),
\end{equation}
where $E$ is a permutation of the cones $C_1, ..., C_d$, $G$ is a piecewise horizontal translation, and $\kappa$ is a tuple of the parameters.  Formally, let $\tau$ be a permutation of $\{1,...,d\}$, that is a bijection $\tau: \{1,...,d\} \rightarrow \{1,...,d\}$, and let
\begin{equation*}
\theta_j(\alpha,\tau) = \sum_{\tau(k) < \tau(j)} \alpha_k - \sum_{k < j} \alpha_k.
\end{equation*}
When $\alpha$ and $\tau$ are unambiguous, we may refer to $\theta_j(\alpha,\tau)$ simply as $\theta_j$.  The map $E: \ol{\mathbb{H}} \rightarrow \ol{\mathbb{H}}$ is then defined as
\begin{equation*}
E(z) = 	\begin{cases}
		z			& \text{ if } z \in C_0 \cup C_{d+1} \\
		ze^{i\theta_j}	& \text{ if } z \in C_j, j \in \{1, ..., d\}
		\end{cases}.
\end{equation*}

\begin{figure}
\centering
\includegraphics[width=0.7\linewidth]{{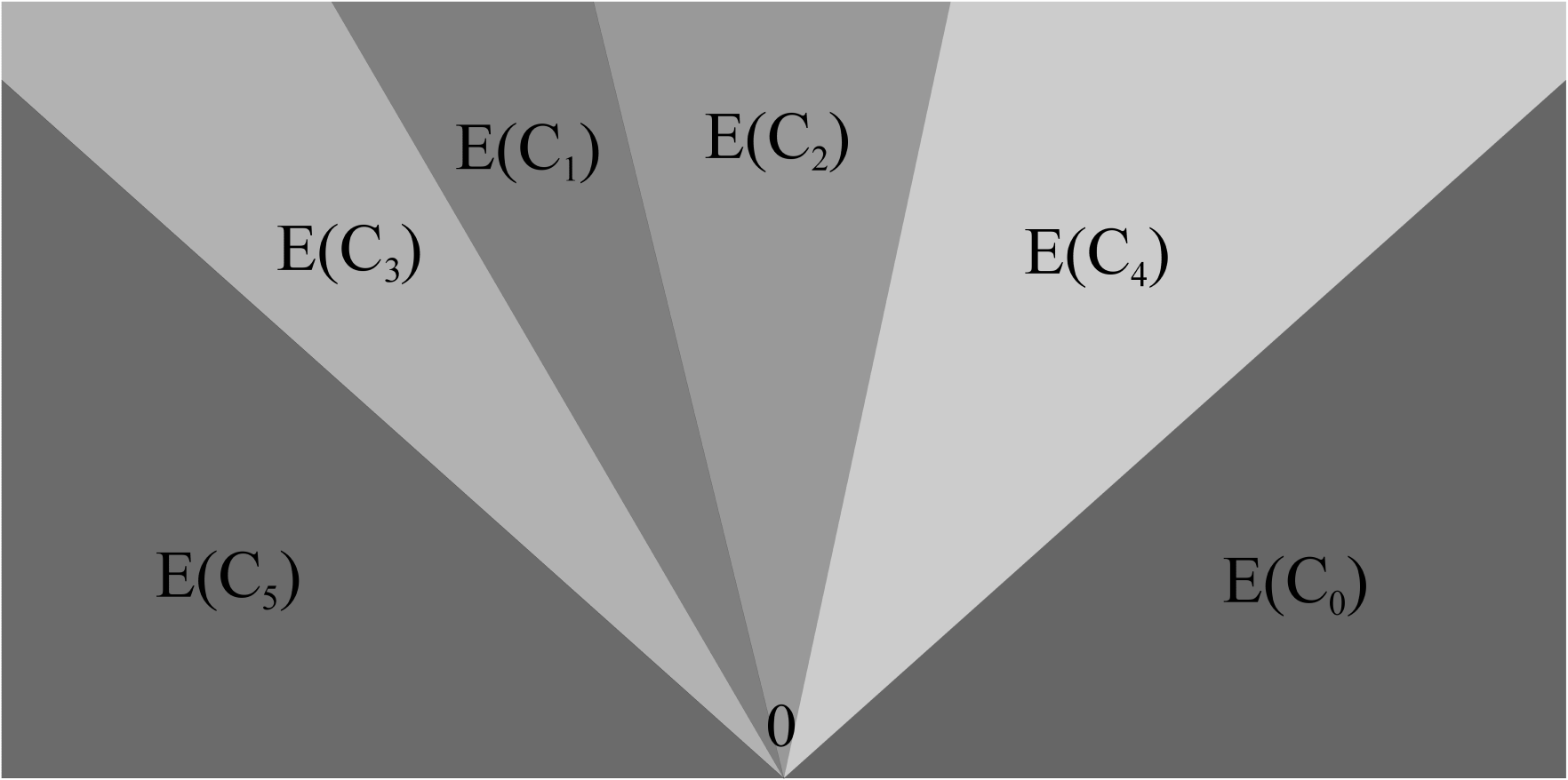}}
\caption{The same partition from figure \ref{P} after applying the cone exchange map $E$.}
\end{figure}

Note that $E$ is invertible Lebesgue-almost everywhere in $\ol{\mathbb{H}}$.  We define the \textit{middle cone} $C_c$ of $F_\kappa$ as
\begin{equation*}
C_c = \bigcup_{j=1}^d C_j = \mathbb{H} \setminus (C_0 \cup C_{d+1}).
\end{equation*}
The map $G: \ol{\mathbb{H}} \rightarrow \ol{\mathbb{H}}$ is defined as
\begin{equation}
\label{G}
G(z) = 	\begin{cases}
		z + \lambda	& \text{ if } z \in C_{d+1}, \\
		z - \eta		& \text{ if } z \in C_c, \\
		z - \rho		& \text{ if } z \in C_0,
		\end{cases}
\end{equation}
where $\rho, \lambda \in (0, \infty)$ are rationally independent, i.e. $\lambda /\rho \in \R \setminus \Q$, and $-\lambda < \eta < \rho$. Finally, we collect the parameters into the tuple $\kappa = (\alpha, \tau, \lambda, \eta, \rho)$.  See figures \ref{AsymmPlot1}, \ref{AsymmPlot2} and \ref{plot} for plots of the orbit structure for $F_\kappa$ for example choices of parameters $\kappa$.

\begin{figure}
\centering
\includegraphics[width=0.7\linewidth]{{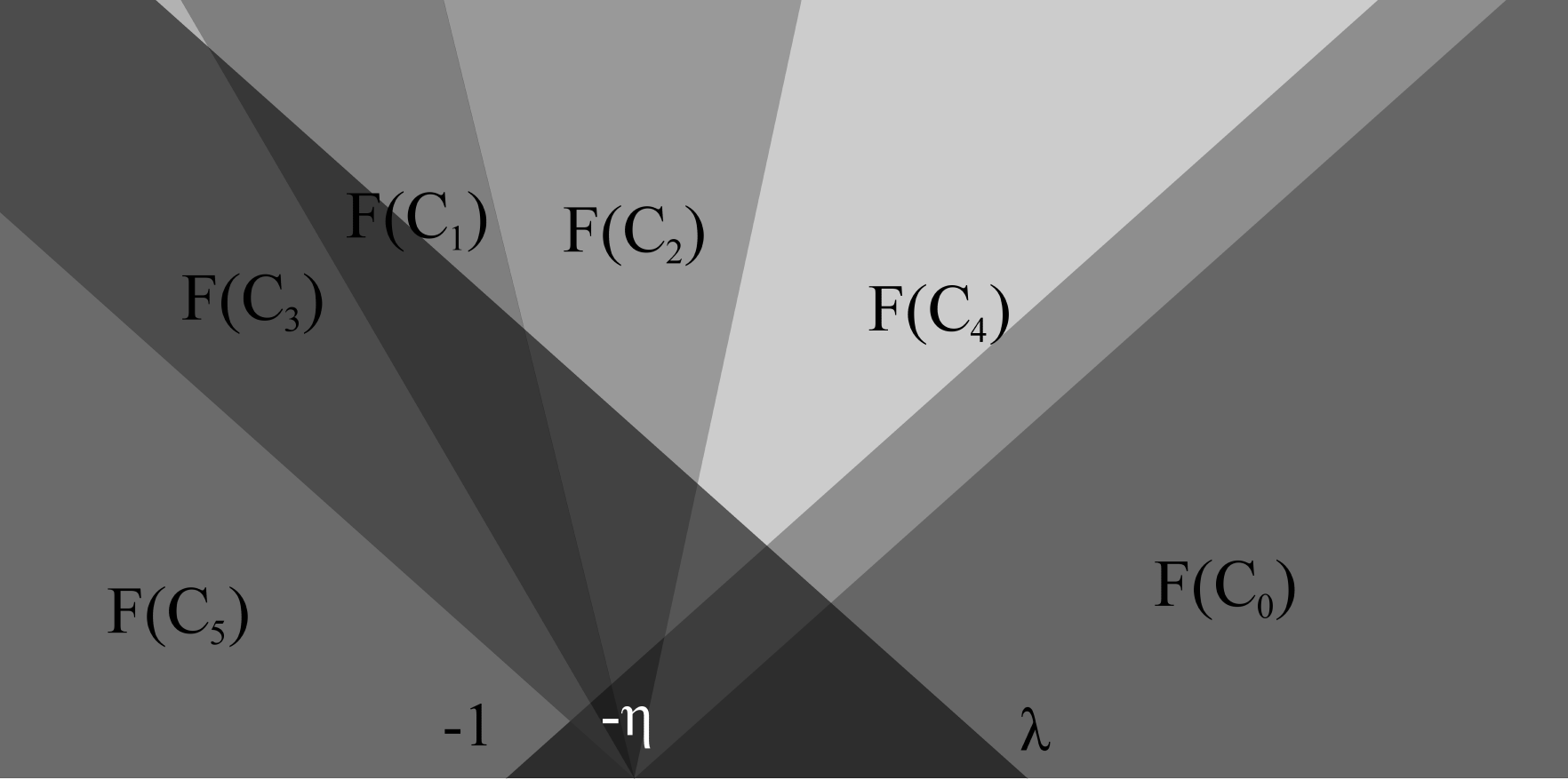}}
\caption{The example partition from figure \ref{P} after the TCE $F = G \circ E$ is performed.  Note the overlapping of the cones in a region containing 0.}
\end{figure}

For $a > 0$, let $F_{\kappa'}$ denote the TCE with parameters $\kappa' = (\alpha, \tau, \lambda/a, \eta/a, \rho/a)$.  Define $s_a : \ol{\mathbb{H}} \rightarrow \ol{\mathbb{H}}$ as uniform scaling by $a$ about the origin
\begin{equation}
\label{Sc}
s_a(z) = az.
\end{equation}

\begin{proposition}
\label{scale}
We have the following conjugacy:
\begin{equation}
\label{scalingconj}
F_{\kappa'}(z) = s_a^{-1} \circ F_\kappa \circ s_a(z).
\end{equation}
\end{proposition}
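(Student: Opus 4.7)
The plan is to verify the conjugacy by checking the two components $E$ and $G$ of $F_\kappa = G \circ E$ separately and then composing. The key observation is that scaling about the origin preserves arguments, so the cones $C_j$ are invariant under $s_a$ in the sense that $z \in C_j$ if and only if $s_a(z) = az \in C_j$. This will let me push $s_a$ through the case distinctions defining $E$ and $G$.

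First I would show that $E$ commutes with $s_a$, i.e.\ $E \circ s_a = s_a \circ E$. Fix $z \in \ol{\mathbb{H}}$. If $z \in C_0 \cup C_{d+1}$, then $az \in C_0 \cup C_{d+1}$ and both sides equal $az$. If $z \in C_j$ for some $j \in \{1,\ldots,d\}$, then $az \in C_j$ and
\begin{equation*}
E(s_a(z)) = (az) e^{i\theta_j} = a(z e^{i\theta_j}) = s_a(E(z)),
\end{equation*}
using that $\theta_j = \theta_j(\alpha,\tau)$ depends only on $\alpha$ and $\tau$, which are the same in $\kappa$ and $\kappa'$. Note that, since $\alpha$ and $\tau$ are unchanged, $E$ is literally the same map for both parameter tuples.

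Next I would show that the translation component transforms correctly. Let $G_{\kappa'}$ denote the piecewise horizontal translation associated to $\kappa'$, that is, $G_{\kappa'}$ adds $\lambda/a$, $-\eta/a$, or $-\rho/a$ on $C_{d+1}$, $C_c$, $C_0$ respectively. For $z$ in each of the three pieces, $s_a(z)$ lies in the same piece, so
\begin{equation*}
s_a^{-1}\circ G \circ s_a(z) = \tfrac{1}{a}\bigl(az + c\bigr) = z + \tfrac{c}{a} = G_{\kappa'}(z),
\end{equation*}
where $c \in \{\lambda,-\eta,-\rho\}$ is the translation constant associated with the piece containing $z$. Hence $G_{\kappa'} = s_a^{-1}\circ G\circ s_a$.

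Combining the two ingredients,
\begin{equation*}
F_{\kappa'}(z) = G_{\kappa'} \circ E(z) = \bigl(s_a^{-1} \circ G \circ s_a\bigr) \circ E(z) = s_a^{-1}\circ G \circ \bigl(s_a \circ E\bigr)(z) = s_a^{-1}\circ G \circ E \circ s_a(z) = s_a^{-1}\circ F_\kappa \circ s_a(z),
\end{equation*}
which is the claim. I do not anticipate a serious obstacle: the only subtle point is to be careful about the half-open boundaries of the $W_j$ when verifying that $z$ and $az$ belong to the same atom, but this follows immediately because $a>0$ and all cones are defined by conditions on $\operatorname{Arg} z$ alone.
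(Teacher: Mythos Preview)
The proposal is correct and follows essentially the same approach as the paper: both rely on the scale-invariance of the cones $C_j$ (so $z \in C_j$ iff $az \in C_j$) to push the scaling through the piecewise definition of $F_\kappa$. Your version is organized slightly differently---you handle $E$ and $G$ separately and then compose, whereas the paper expands $F_\kappa(az)$ directly in one piecewise calculation---but the substance is identical.
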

\begin{proof}
Firstly, observe that for all $j \in \{0,...,d+1\}$, 
\begin{equation*}
aC_j = C_j,
\end{equation*}
from which we can deduce that
\begin{equation}
\label{scalezP}
az \in C_j \text{ if and only if } z \in C_j.  
\end{equation}

Let $z \in \ol{\mathbb{H}}$.  From \eqref{Sc}, we get
\begin{equation*}
s_a^{-1} \circ F_\kappa \circ s_a(z) = \frac{1}{a}F_\kappa(az),
\end{equation*}
and by expanding $F_\kappa$ as in \eqref{F}, we have
\begin{equation*}
s_a^{-1} \circ F_\kappa \circ s_a(z) = 	\begin{cases}
						\frac{1}{a}(az + \lambda)			&\text{ if } az \in C_{d+1} \\
						\frac{1}{a}(e^{i\theta_j}(az) - \eta) 	&\text{ if } az \in C_j, j \in \{1,...,d\} \\
						\frac{1}{a}(az - \rho)				&\text{ if } az \in C_0
						\end{cases}.
\end{equation*}
Recalling \eqref{scalezP}, distributing the multiplication by $1/a$, we finally see that
\begin{align*}
s_a^{-1} \circ F_\kappa \circ s_a(z) 	&=	\begin{cases}
							z + \lambda/a			&\text{ if } z \in C_{d+1} \\
							e^{i\theta_j}z - \eta/a 	&\text{ if } z \in C_j, j \in \{1,...,d\} \\
							z - \rho/a				&\text{ if } z \in C_0
							\end{cases} \\
						&= F_{\kappa'}(z).
\end{align*}
\end{proof}

Clearly from this proposition, we can normalize $\rho = 1$ without any loss of generality.  Indeed, normalizing in this way proves very helpful for establishing recurrence results, as we shall see.

\begin{figure}
\centering
\includegraphics[width=\linewidth]{{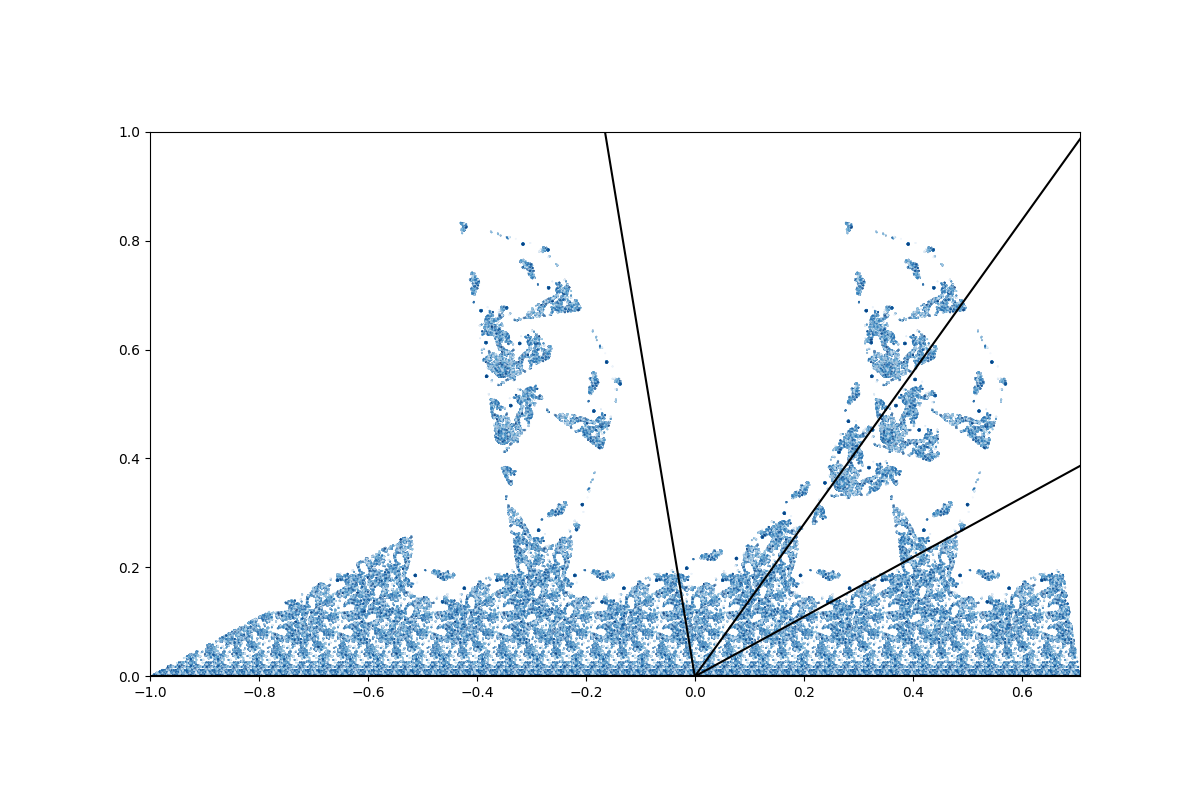}}
\caption{A plot of the first 3000 elements of the forward orbits of 1000 points (omitting the first 1500 to remove transients) chosen uniformly in the box $[-\rho,\lambda] \times [0,1]$ under a TCE with parameters $\alpha = (0.5,\pi/7,\pi/4,17\pi/28-0.5)$, $\tau: 1 \mapsto 2, 2 \mapsto 1$, $\lambda = \sqrt{2}/2$, $\eta = 1 - \lambda$ and $\rho = 1$.  Each orbit is given a (non-unique) colour to illustrate the trajectories.}
\label{AsymmPlot1}
\end{figure}

Simulations of the orbits of points under some TCEs appears to reveal complex behaviour even at small scales close to the real line, such as in figures \ref{AsymmPlot1} and \ref{AsymmPlot2}.  One way to investigate this behaviour  is by applying the tools of renormalization.  In particular, let $h: C_c \rightarrow \N \setminus \{0\}$ denote the \textit{first return time} of $z \in C_c$ to $C_c$ under $F_\kappa$, that is
\begin{equation}
\label{hz}
h(z) = \inf \left\{ n>0 : F_\kappa^n(z) \in C_c \right\}.
\end{equation}
The \textit{first return map} $R_\kappa : C_c \rightarrow C_c$ of $F_\kappa$ to $C_c$ is then defined as
\begin{equation}
\label{Rz}
R_\kappa(z) = F_\kappa^{h(z)}(z).
\end{equation}
Observe that for all $z \in C_c$, $R_\kappa(z) = F_\kappa^{h(z)}(z) = G^{h(z)} \circ E(z)$, since $E$ is the identity outside of $C_c$.

We will now state the main theorem of our paper in a simplified form, which we shall restate in more detail later as Theorem \ref{Renorm}, after establishing some terminology and preliminary results.

\begin{theorem}
Let $\alpha \in \mathbb{B}^{d+2}$, $\tau: \{1,...,d\} \rightarrow \{1,...,d\}$ be a bijection, $\lambda \in [0,1) \setminus \Q$, $-\lambda < \eta = p - q\lambda < 1$ for some $p,q \in \N \setminus \{0\}$ and set $\kappa = (\alpha,\tau,\lambda,\eta,1)$. Then there exist $\lambda' \in [0,1) \setminus \Q$, $\eta' \in \R$ of the form $-\lambda' < \eta' = p' - q'\lambda' < 1$ for some $p', q' \in \N \setminus\{0\}$, and a convex, positive area set $V \subset C_c$ containing 0 such that
\begin{equation*}
R_{\kappa'}(z) = \frac{1}{1 - \lambda_1\lambda} R_\kappa((1 - \lambda_1\lambda)z),
\end{equation*}
for all $z \in V$, where $\kappa' = (\alpha, \tau, \lambda', \eta', 1)$.
\end{theorem}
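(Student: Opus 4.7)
The overall plan is to identify a scale $a = 1 - \lambda_1\lambda$, where $\lambda_1 = \lfloor 1/\lambda \rfloor$ is the first partial quotient of the continued fraction expansion of $\lambda$, at which the first return map $R_\kappa$ near the origin exhibits self-similarity, and then to package this self-similarity as a conjugacy between $R_\kappa$ and $R_{\kappa'}$ via the uniform scaling $s_a$, with new parameters $\kappa' = (\alpha, \tau, \lambda', \eta', 1)$ determined by one step of a Gauss-type renormalization on the baseline. The scaling conjugacy of Proposition~\ref{scale} plays a dual role: first, it shows that the cones $C_j$ and the cone exchange $E$ are invariant under $s_a$, so any renormalization will automatically preserve the combinatorial data $(\alpha, \tau)$; and second, it is the mechanism by which the size $a$ of the induced baseline interval is absorbed back into the parameters $(\lambda, \eta)$ to restore $\rho' = 1$.

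To begin, I would use the geometric construction of $R_\kappa$ developed in Sections~3 and~4 to describe the first return map explicitly on a sufficiently small neighborhood $U$ of $0$ in $C_c$. Near the baseline the imaginary part of any point is small, and the sequence of iterates of $F_\kappa$ between two successive visits to $C_c$ reduces essentially to horizontal translations on $C_0$ by $-1$ and on $C_{d+1}$ by $+\lambda$, interrupted only at the final step by the middle-cone translation $-\eta$. Because $\eta = p - q\lambda$ lies in the module $\Z + \Z\lambda$, the horizontal coordinates of these iterates all lie in a single $\Z + \Z\lambda$-orbit of $\operatorname{Re}(E(z))$, and the first return time is governed by the first return time of the baseline rotation on the interval $[-\lambda, 1)$.

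The renormalization step would then be performed by restricting attention to a smaller set $aV \subset U$, with $V \ni 0$. The classical one-step Gauss renormalization of the rotation by $\lambda$ shows that the first return to an interval of length $a = 1 - \lambda_1 \lambda$ about $0$ is conjugate, via multiplication by $1/a$, to a rotation with new rotation number $\lambda' \in [0,1) \setminus \Q$. Pulling this back to a two-dimensional neighborhood of $0$ inside $C_c$, and using the fact that $E$ is homogeneous ($E(az) = a E(z)$ from the proof of Proposition~\ref{scale}), I would show that on $aV$ the return map $R_\kappa$ coincides with $s_a \circ R_{\kappa'} \circ s_a^{-1}$, where $\kappa'$ has the stated form and $\eta' = p' - q'\lambda'$ for explicit integers $(p',q')$ obtained from $(p, q, \lambda_1)$ by tracking cumulative baseline shifts along the return orbit. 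Rearranging yields the identity $R_{\kappa'}(z) = R_\kappa(az)/a$ on $V$.

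The main obstacle will be step two: ensuring that $V$ can be taken convex, of positive area, and containing $0$, while simultaneously guaranteeing that for every $z \in aV$ the entire orbit segment from $z$ to $R_\kappa(z)$ visits the cones $C_0$, $C_c$, $C_{d+1}$ in precisely the combinatorial pattern predicted by the baseline renormalization, and that the re-entry into $C_c$ lands in the correct sub-cone $C_j$ so that the action of $E$ matches that of $F_{\kappa'}$ at the rescaled point. The set $V$ will be obtained as the intersection of finitely many affine half-planes imposed by these combinatorial constraints together with the cone boundaries at scale $a$, and showing that this intersection has positive area requires effective control on the number of excursions to $C_0$ and $C_{d+1}$ in terms of $\lambda$, $\lambda_1$, and the cone angles $\alpha$.
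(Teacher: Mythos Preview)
Your high-level strategy---use the homogeneity of $E$ and the scaling conjugacy of Proposition~\ref{scale} to reduce to a statement about the translations $\Delta$ on the baseline---is the same as the paper's. But several specifics need correction, and the paper's argument is more explicit than your outline suggests.

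First, the new parameter is $\lambda' = g^2(\lambda)$, not a single Gauss step. The scale $a = 1 - \lambda_1\lambda$ equals $-\Delta_{1,0}(\lambda)$, and the key algebraic identity (Lemma~\ref{Deltas} with $m=2$) is $\Delta_{j,n}(g^2(\lambda)) = -\Delta_{j+2,n}(\lambda)/\Delta_{1,0}(\lambda)$: dividing by $a$ shifts the semiconvergent index by $2$ and replaces $\lambda$ by $g^2(\lambda)$. Your phrase ``one-step Gauss renormalization'' would produce the wrong $\lambda'$.

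Second, the paper does not lift a renormalization of the baseline rotation holistically; it works piecewise on the explicit parallelogram partition $\{S_{m,n}\}$ of \eqref{defSmn}. Theorem~\ref{conje} gives $R_\kappa(z) = E(z) + \Delta_{m,n+1}(\lambda)$ on each $E^{-1}(S_{m,n})$, and Theorem~\ref{Scalingprop} gives $\tfrac{1}{a}S_{m+2,n}(\lambda) = S_{m,n}(g^2(\lambda))$. The conjugacy $R_{\kappa'}(z) = \tfrac{1}{a}R_\kappa(az)$ then follows by a one-line check on each piece using $E(az)=aE(z)$. No separate ``combinatorial pattern matching'' along the orbit is needed once Theorem~\ref{conje} is in hand.

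Third, your worry that ``the re-entry into $C_c$ lands in the correct sub-cone $C_j$'' is misplaced: since $E$ is the identity on $C_0 \cup C_{d+1}$, one has $R_\kappa(z) = G^{h(z)}(E(z))$, so $E$ acts only once at the start and the landing sub-cone is irrelevant to $R_\kappa(z)$.

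Finally, the first return time is unbounded on every neighbourhood of $0$ (indeed $h|_{E^{-1}(S_{m,n})} = h_{m,n+1} \to \infty$), so $V$ cannot be cut out by finitely many orbit constraints as your last paragraph proposes. In the paper $V = U_{m_0,0}(\kappa')$ is the \emph{infinite} union of parallelograms past a threshold index, and Lemma~\ref{UnionFillsPc} shows this union collapses to an intersection of three translated cones $(C_0 - x) \cap C_c \cap (C_{d+1} - y)$, which gives convexity and positive area directly.
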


\begin{figure}
    \centering
    \includegraphics[width=\linewidth]{{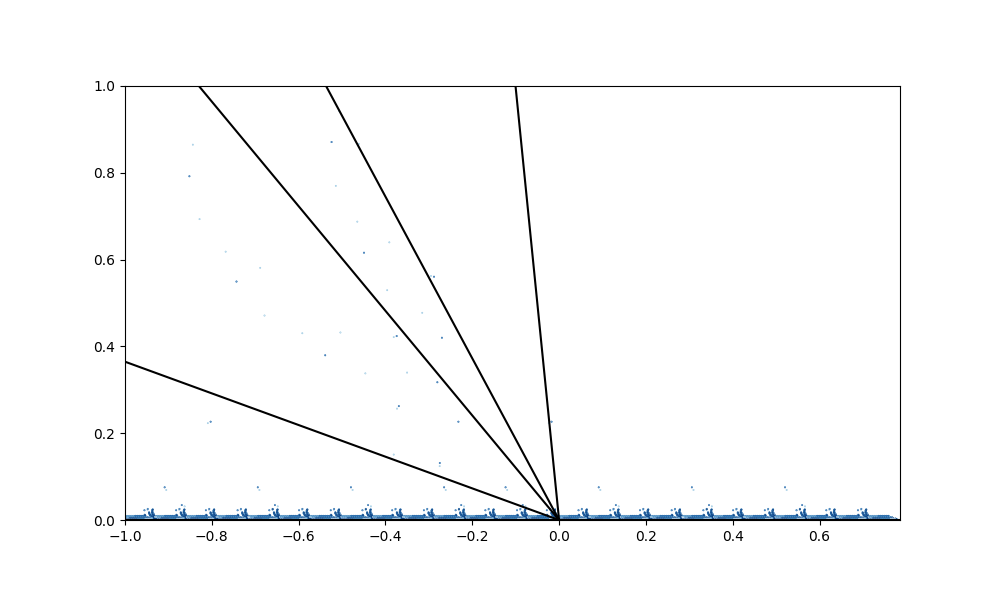}}
    \caption{A similar plot to figure \ref{AsymmPlot1}, this time of the first 3000 elements (excluding the first 1500 for transients) of the forward orbits of 1500 points chosen uniformly in the box $[-\rho,\lambda] \times [0,1]$ under a TCE with parameters $\alpha = (\pi/2+0.1,\pi/8,0.2,\pi/5-0.1,7\pi/40-0.2)$, $\tau: 1 \mapsto 3, 2 \mapsto 2, 3 \mapsto 1$, $\lambda = \pi/4$, and $\eta = 1 - \lambda$.  Note that the phase space is very sparse compared to figures \ref{AsymmPlot1} and \ref{plot}.}
    \label{AsymmPlot2}
\end{figure}


\section{Tools}

In this section we prove some preliminary results that will serve as tools for more detailed investigation of the renormalization of TCEs.  Firstly, we note the following smaller observations.

\begin{proposition}
\label{FisG}
Let $F_\kappa$ be as in \eqref{F} and $G$ as in \eqref{G}.  Then for all $x \in \R$, 
\begin{equation*}
F_\kappa(x) = G(x).
\end{equation*}
\end{proposition}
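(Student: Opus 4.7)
The plan is to show that $E$ acts as the identity on the real line $\mathbb{R} \subset \overline{\mathbb{H}}$, so that $F_\kappa(x) = G(E(x)) = G(x)$ for every $x \in \mathbb{R}$. Since $E$ is defined piecewise according to which cone $C_j$ the point lies in, the proof reduces to a case analysis on $x$ based on sign.

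First I would handle $x > 0$. Such a point has $\operatorname{Arg} x = 0 \in [0,\alpha_0) = W_0$, so $x \in C_0$. By the definition of $E$ in the excerpt, $E(x) = x$. Next, for $x < 0$, we have $\operatorname{Arg} x = \pi$. Since $\sum_{j=0}^{d+1}\alpha_j = \pi$ and the partition of $[0,\pi]$ places the right endpoint $\pi$ in $W_{d+1}$, the point $x$ lies in $C_{d+1}$, and again $E(x) = x$. Finally, the origin $x = 0$ is handled by the explicit convention in the definition of $C_1$, namely $0 \in C_1$; in this case $E(0) = 0 \cdot e^{i\theta_1} = 0$. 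Thus $E(x)=x$ in every case.

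Combining these, for any $x \in \mathbb{R}$ we have
\begin{equation*}
F_\kappa(x) = G(E(x)) = G(x),
\end{equation*}
which is the claim. The only subtlety is the degenerate point $x=0$, where the argument is not defined; this is why the partition $\mathcal{C}$ was set up with the ad hoc assignment $0 \in C_1$, and the proof simply needs to notice that multiplying $0$ by the unit complex number $e^{i\theta_1}$ still returns $0$. There is no real obstacle here — the statement is essentially a bookkeeping check that the rotational part $E$ fixes the baseline, which is the whole point of restricting the cone exchange to act on $\mathbb{H}$ while the translation $G$ governs the boundary dynamics.
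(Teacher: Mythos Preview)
Your proof is correct and takes exactly the same approach as the paper: both argue that $E$ restricts to the identity on $\R$, so $F_\kappa = G \circ E$ agrees with $G$ there. The paper simply asserts this in one line, while you spell out the case analysis on the sign of $x$ (including the $x=0$ case via $0 \cdot e^{i\theta_1}=0$), which is a perfectly acceptable elaboration.
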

\begin{proof}
This is clear from the observation that $E$ is the identity on $\R$.
\end{proof}

\begin{proposition}
The dynamics of $F_\kappa$ on $\R$ and on $\mathbb{H}$ are separate in the sense that $F_\kappa(\R) \subset \R$ and $F_\kappa(\mathbb{H}) \subset \mathbb{H}$.
\end{proposition}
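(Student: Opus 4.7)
The plan is to handle the two inclusions $F_\kappa(\R) \subset \R$ and $F_\kappa(\mathbb{H}) \subset \mathbb{H}$ separately, in each case exploiting the decomposition $F_\kappa = G \circ E$ from \eqref{F}.

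For the real-line inclusion, I would apply Proposition \ref{FisG} to reduce the problem to checking $G(\R) \subset \R$. This is immediate from the definition \eqref{G}: each branch of $G$ is a translation by a real number ($\lambda$, $-\eta$, or $-\rho$), and real translations clearly preserve $\R$.

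For the upper half-plane inclusion, I would first observe that $G$ is a piecewise horizontal translation and therefore preserves imaginary parts, so $G(\mathbb{H}) \subset \mathbb{H}$ automatically. It thus suffices to show $E(\mathbb{H}) \subset \mathbb{H}$, since then $F_\kappa(\mathbb{H}) = G(E(\mathbb{H})) \subset G(\mathbb{H}) \subset \mathbb{H}$. I would split into cases depending on which cone $z$ lies in: if $z \in \mathbb{H} \cap (C_0 \cup C_{d+1})$, then $E(z) = z$ and there is nothing to prove, while if $z \in \mathbb{H} \cap C_j$ for some $j \in \{1,\dots,d\}$, then $E(z) = z e^{i\theta_j}$ has the same modulus as $z$ but argument $\operatorname{Arg}(z) + \theta_j$. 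The task then reduces to showing this shifted argument lies in $(0, \pi)$, from which $\Im(E(z)) > 0$ follows.

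The main (and essentially only) obstacle is this last step, which requires careful tracking of the telescoping sums $\sum_{k<j}\alpha_k$ and $\sum_{\tau(k)<\tau(j)}\alpha_k$ appearing in the definition of $\theta_j$. A direct computation shows that for $\operatorname{Arg}(z) \in W_j$, the shifted argument $\operatorname{Arg}(z) + \theta_j$ lies in an interval whose left endpoint is $\alpha_0 + \sum_{\tau(k) < \tau(j)}\alpha_k$ and whose right endpoint is $\alpha_0 + \sum_{\tau(k) \le \tau(j)}\alpha_k$. Since $\tau$ permutes $\{1,\dots,d\}$ and the indexed $\alpha_k$'s together sum to $\pi - \alpha_0 - \alpha_{d+1}$, both endpoints lie in $[\alpha_0, \pi - \alpha_{d+1}] \subset (0, \pi)$. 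Hence $\operatorname{Arg}(E(z)) \in (0, \pi)$, giving $\Im(E(z)) > 0$ and completing the argument.
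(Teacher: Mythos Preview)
Your proof is correct. The real-line inclusion matches the paper exactly. For the upper half-plane inclusion, you take a slightly different route from the paper: the paper argues by contradiction, supposing $z \in \mathbb{H}$ with $F_\kappa(z) \in \R$, reducing to $E(z) \in \R$, and then invoking (without explicit verification) that $E(C_c) \subset C_c$ together with $C_c \cap \R = \{0\}$ to reach a contradiction. You instead give a direct argument, explicitly computing that $\operatorname{Arg}(z) + \theta_j \in [\alpha_0, \pi - \alpha_{d+1}]$ via the telescoping sums, which in effect proves the claim $E(C_c) \subset C_c$ that the paper takes for granted. Your approach is more self-contained and makes the geometry of the cone exchange transparent; the paper's is shorter but leans on an unproven (if intuitively obvious) invariance statement.
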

\begin{proof}
The statement $F_\kappa(\R) \subset \R$ is clear from Proposition \ref{FisG} and the fact that $G$ is a horizontal translation. 

We now prove that $F_\kappa(\mathbb{H}) \subset \mathbb{H}$.  Suppose that $z \in \mathbb{H}$ such that $F_\kappa(z) \in \R$. Then $E(z) \in \R$, as $G$ is a horizontal translation.  If $z \in \mathbb{H} \setminus C_c$ then $\R \ni E(z) = z \in \mathbb{H}$, which is a contradiction. On the other hand, if $z \in C_c$, then $E(z) \in C_c$  But $\R \cap C_c = \{0\}$, meaning that $E(z) \in \R$ implies $E(z) = 0$.  But this is only the case if $z = 0 \in \R$, which is a contradiction.
\end{proof}

Let $\iota : \ol{\mathbb{H}} \rightarrow \{-1,0,1\}^\N$ denote the \textit{itinerary} for $F_\kappa$, defined by
\begin{equation*}
\iota(z) = \iota_0 \iota_1 \iota_2 ...,
\end{equation*}
where
\begin{equation*}
\iota_n = \iota_n(z) =  \begin{cases}
                        -1,     &\text{ if } F_\kappa^n(z) \in C_{d+1}, \\
                        0,      &\text{ if } F_\kappa^n(z) \in C_c, \\
                        1,      &\text{ if } F_\kappa^n(z) \in C_0.
                        \end{cases}     
\end{equation*}
This map is similar to, but distinct from the true notion of the \textit{encoding map}, since here we do not distinguish between the cones $C_1$, $C_2$, ..., $C_d$ within the middle cone $C_c$.  The next Lemma provides a crucial tool in the proof of Theorem \ref{conje}, since the dynamics on the interval $[-\rho,\lambda)$ is that of a rotation of the circle (except at the point $0$, in which case $F_\kappa(0) = -\eta$), which is more easily understood than that of an arbitrary point in $C_c$.

\begin{lemma}
\label{Split}
Let $\alpha \in \mathbb{B}^{d+2}$, $\tau: \{1,...,d\} \rightarrow \{1,...,d\}$ be a bijection, $\lambda, \rho \in \R$ such that $\lambda/\rho \notin \mathbb{Q}$, and $-\lambda < \eta < 1$.  If $z \in C_c$, then for all $1 \leq j \leq h(z)$,
\begin{equation*}
F_\kappa^j(z) = E(z) + F_\kappa^j(0).
\end{equation*}
\end{lemma}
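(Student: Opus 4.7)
My plan is to induct on $j$, from $j=1$ up to $j=h(z)$, exploiting the decomposition $F_\kappa = G\circ E$ together with the fact that $E$ is the identity on $\ol{\mathbb{H}}\setminus C_c = C_0\cup C_{d+1}$ and permutes the cones $C_1,\dots,C_d$ inside $C_c$ (so in particular $E(C_c)=C_c$). Once the orbit leaves $C_c$ it is driven only by horizontal translations under $G$, so provided both the orbit of $z$ and the orbit of $0$ land in the same of the two side cones at each intermediate step, the two orbits stay separated exactly by the complex number $E(z)$.

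For the base case $j=1$, since $z\in C_c$ we have $E(z)\in C_c$, and hence $F_\kappa(z)=G(E(z))=E(z)-\eta$. Similarly $0\in C_1\subset C_c$ with $E(0)=0$, so $F_\kappa(0)=G(0)=-\eta$. Combining, $F_\kappa(z)=E(z)+F_\kappa(0)$.

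For the inductive step, suppose the identity holds at level $j$ with $1\le j<h(z)$. Write $E(z)=a+bi$ (with $b\ge 0$, and $b>0$ unless $z=0$, in which case the statement is trivial) and $c=F_\kappa^j(0)\in\R$, the reality being guaranteed by $F_\kappa(\R)\subset\R$ (Proposition~\ref{FisG} and the previous proposition). The induction hypothesis then reads $F_\kappa^j(z)=(a+c)+bi$. Because $j<h(z)$ the point $F_\kappa^j(z)$ lies in $C_0\cup C_{d+1}$, and on these cones $E$ is the identity while $G$ is translation by either $-\rho$ or $+\lambda$. Consequently, the inductive step reduces to showing that $F_\kappa^j(0)=c$ lies in the \emph{same} side cone as $F_\kappa^j(z)$, because then the identical translation is applied to both sides of the induction hypothesis, yielding $F_\kappa^{j+1}(z)=E(z)+F_\kappa^{j+1}(0)$.

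The cone-matching step is the main obstacle, and it is the only place where the geometry of $C_c$ is genuinely used. Since $E(z)\in C_c$, $\operatorname{Arg}(E(z))\in[\alpha_0,\pi-\alpha_{d+1}]$, which (by monotonicity of $\cot$ on $(0,\pi)$) is equivalent to $-b\cot\alpha_{d+1}\le a\le b\cot\alpha_0$. On the other hand, $F_\kappa^j(z)=(a+c)+bi\in C_0$ is equivalent to $a+c>b\cot\alpha_0$, while $F_\kappa^j(z)\in C_{d+1}$ is equivalent to $a+c<-b\cot\alpha_{d+1}$. Subtracting the corresponding bound on $a$ from each of these inequalities forces $c>0$ in the first case and $c<0$ in the second, so $F_\kappa^j(0)=c$ lies in $C_0$ or $C_{d+1}$ exactly according to where $F_\kappa^j(z)$ lies. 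Applying $G$ to both sides of the induction hypothesis then closes the induction.
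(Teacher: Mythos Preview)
Your proof is correct and follows essentially the same approach as the paper's: both hinge on the observation that since $E(z)\in C_c$ has real part bounded between $-b\cot\alpha_{d+1}$ and $b\cot\alpha_0$ (the paper's $-\varepsilon_1$ and $\varepsilon_2$), membership of $F_\kappa^j(z)$ in $C_0$ (resp.\ $C_{d+1}$) forces $F_\kappa^j(0)>0$ (resp.\ $<0$), so the same horizontal translation is applied to both orbits. The only cosmetic difference is that you argue by direct induction while the paper packages the identical step as a minimal-counterexample contradiction.
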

\begin{proof}
Suppose not, for a contradiction.  Then there is some $n$ with $0 \leq n \leq h(z)$ such that $F_\kappa^n(z) \neq E(z) + F_\kappa^n(0)$, and without loss of generality assume that $n$ is the smallest such integer.  Clearly $n > 1$, so for all $0 \leq j \leq n-1$, $F_\kappa^j(z) - E(z) = F_\kappa^j(0)$, and therefore $\iota_j(z) = \iota_j(0)$ for all $0 \leq j \leq n-2$, but $\iota_{n-1}(z) \neq \iota_{n-1}(0)$.  Since $n \leq h(z)$, we cannot have $F_\kappa^j(z) \in C_c$ for all $1 \leq j \leq n-1$.  Hence for addresses of the $(n-1)^{\text{th}}$ iterates of $z$ and $0$ to disagree, one of two cases must occur:
\begin{enumerate}[label={\arabic*.}]
\item $F_\kappa^{n-1}(z) \in C_{d+1}$ and $F_\kappa^{n-1}(0) \in C_c \cup C_0$; or \\
\item $F_\kappa^{n-1}(z) \in C_{0}$ and $F_\kappa^{n-1}(0) \in C_c \cup C_{d+1}$.
\end{enumerate}
Since the orbit of $0$ is restricted to $\R$ and since $E(0) = 0$, the second parts of each case become $G^{n-1}(0) \geq 0$ and $G^{n-1}(0) \leq 0$, respectively.

Suppose $E(z) = z'$, and let $\varepsilon_1 = \Im(z') \operatorname{cot}(\alpha_{d+1})$ and $\varepsilon_2 = \Im(z') \operatorname{cot}(\alpha_{0})$.  Then $z' \in C_c$ if and only if $-\varepsilon_1 \leq \Re(z') \leq \varepsilon_2$.  Note that since $G$ is a horizontal translation, $F_\kappa^{n-1}(z) = G^{n-1}(E(z)) = G^{n-1}(z') \in C_{d+1}$ if and only if $\Re(G^{n-1}(z')) < -\varepsilon_1$.  Similarly $G^{n-1}(z') \in C_{0}$ if and only if $\Re(G^{n-1}(z')) > \varepsilon_2$. The two above cases above can thus be reformulated as:
\begin{enumerate}[label={\arabic*.}]
\item $\Re(G^{n-1}(z')) < -\varepsilon_1$ and $G^{n-1}(0) \geq 0$; or \\
\item $\Re(G^{n-1}(z')) > \varepsilon_2$ and $G^{n-1}(0) \leq 0$.
\end{enumerate}

In case 1, we get $0 \leq G^{n-1}(0) = G^{n-1}(z') - z' = \Re(G^{n-1}(z')) - \Re(z')$.  Hence $\Re(G^{n-1}(z)) \geq \Re(z')$ and thus
\begin{equation*}
-\varepsilon_1 \leq \Re(z') \leq \Re(G^{n-1}(z')) < -\varepsilon_1,
\end{equation*}
which is a contradiction.  Case 2 leads to a similar contradiction.  Therefore there is no such $n$.
\end{proof}

\subsection{Continued Fractions}

Recall from the theory of continued fractions that the $n^{\text{th}}$ convergent to a positive, irrational real number $\lambda = [\lambda_0;\lambda_1,\lambda_2,...]$ is a fraction $p_n/q_n = [\lambda_0;\lambda_1,...,\lambda_n]$, where $p_n,q_n$ are coprime integers and $q_n > 0$.  The numbers $p_n,q_n$ can be generated by the recursive relations:
\begin{equation}
\label{convergents}
\begin{aligned}
p_0 &= \lambda_0,				&&q_0 = 1, \\
p_1 &= \lambda_1\lambda_0 + 1,			&&q_1 = \lambda_1, \\
p_n &= \lambda_np_{n-1} + p_{n-2}, 	&&q_n = \lambda_nq_{n-1} + q_{n-2}.
\end{aligned}
\end{equation}
Furthermore, the convergents to $\lambda$ satisfy the property that for all positive integers $s < q_{n+1}$ and all $r \in \Z$,
\begin{equation*}
| q_n\lambda - p_n | \leq | s\lambda - r |,
\end{equation*}
with equality only when $(r,s) = (p_n,q_n)$.  Also observe that we can use the recurrence relation in \eqref{convergents} to set
\begin{equation}
\label{negative}
\begin{aligned}
p_{-1} &= 1          &&q_{-1} = 0.
\end{aligned}
\end{equation}

Let $g: [0,1] \rightarrow [0,1]$ denote the Gauss map, given by
\begin{equation*}
g(x) = \frac{1}{x} - \left\lfloor \frac{1}{x} \right\rfloor.
\end{equation*}
In particular, if $\lambda = [0; \lambda_1, \lambda _2, \lambda_3, ...] \in [0,1]$, then
\begin{equation*}
g(\lambda) = \frac{1}{\lambda} - \lambda_1 = [0; \lambda_2, \lambda_3, ...].
\end{equation*}
Let $\lambda = [0; \lambda_1, \lambda_2, ...] \in [0,1) \setminus \mathbb{Q}$.  To start, let
\begin{equation*}
\mathcal{N}_\lambda = \{(m,n) \in \N^2 : 0 \leq n \leq \lambda_{m+1}\},
\end{equation*}
and define the function $w_\lambda : \mathcal{N}_\lambda \rightarrow \N$ by
\begin{equation*}
w_\lambda(m,n) = 	\begin{cases}
				n 							& \text{ if } m = 0, \\
				\lambda_1 + ... + \lambda_m + n 	& \text{ if } m > 0.
				\end{cases}
\end{equation*}
Note that $w_\lambda$ is surjective and in fact
\begin{equation}
\label{wrapping}
w_\lambda(m+1,0) = w_\lambda(m,\lambda_{m+1}).
\end{equation}
Furthermore, if we define the subset $\mathcal{N}_\lambda^< \subset \mathcal{N}_\lambda$ to be
\begin{equation*}
\mathcal{N}_\lambda^< =\{(m,n) \in \N^2 : 0 \leq n < \lambda_{m+1} \},
\end{equation*}
then $w_\lambda |_{\mathcal{N}_\lambda^<}$ is a bijection.

From now on, we denote the $j^{\text{th}}$ coefficient of the continued fraction expansion of $g^m(\lambda)$ by $g^m(\lambda)_j$.  The next proposition gives us a nice relationship between the set $\mathcal{N}_\lambda$ and the Gauss map $g$.

\begin{proposition}
\label{interchange}
Let $j,m,n \in \N$.  Then $(m+j,n) \in \mathcal{N}_\lambda$ if and only if $(j,n) \in \mathcal{N}_{g^m(\lambda)}$.  Moreover,
\begin{equation*}
w_\lambda(m+j,n) = \lambda_1 + ... + \lambda_m + w_{g^m(\lambda)}(j,n).
\end{equation*}
\end{proposition}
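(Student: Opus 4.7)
The plan is to reduce the proposition to the standard observation that the Gauss map acts as a shift on continued fraction expansions. As a first step, I would note (by a one-line induction, or by directly iterating the identity $g(\lambda) = [0;\lambda_2,\lambda_3,\ldots]$) that
\begin{equation*}
g^m(\lambda) = [0;\lambda_{m+1},\lambda_{m+2},\ldots],
\end{equation*}
so that $g^m(\lambda)_j = \lambda_{m+j}$ for every $j \geq 1$. This is the only genuine input from continued fraction theory; everything else is bookkeeping.

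With that identification in hand, the ``if and only if'' assertion is immediate from the definition of $\mathcal{N}_\lambda$: the condition $(m+j,n)\in\mathcal{N}_\lambda$ unfolds to $0 \leq n \leq \lambda_{m+j+1}$, while $(j,n)\in\mathcal{N}_{g^m(\lambda)}$ unfolds to $0 \leq n \leq g^m(\lambda)_{j+1}$, and these two inequalities are the same by the shift identity.

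For the equation relating $w_\lambda$ and $w_{g^m(\lambda)}$, I would just expand both sides using the definition of $w_\lambda$. In the generic case $m,j>0$ one has
\begin{equation*}
w_\lambda(m+j,n) = \lambda_1 + \cdots + \lambda_{m+j} + n,
\end{equation*}
and
\begin{equation*}
w_{g^m(\lambda)}(j,n) = g^m(\lambda)_1 + \cdots + g^m(\lambda)_j + n = \lambda_{m+1} + \cdots + \lambda_{m+j} + n,
\end{equation*}
so adding $\lambda_1+\cdots+\lambda_m$ to the second expression yields the first, as required.

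There is no deep obstacle here; the only thing requiring care is the boundary behaviour of the two-case definition of $w_\lambda$ when $m=0$ or $j=0$. I would treat $m=0$, $j=0$, and $m=j=0$ as separate short checks, using the empty-sum convention, to confirm that the formula still collapses correctly (e.g.\ when $m=0$ the sum $\lambda_1+\cdots+\lambda_m$ is empty and $w_{g^0(\lambda)}(j,n)=w_\lambda(j,n)$, while when $j=0$ one uses $w_{g^m(\lambda)}(0,n)=n$ together with the second branch of the definition of $w_\lambda$). This case analysis is the only place where a mistake is plausible, so that is where I would be most careful.
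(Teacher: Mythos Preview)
Your proposal is correct and follows essentially the same route as the paper's proof: both reduce the first claim to the shift identity $g^m(\lambda)_{j+1}=\lambda_{m+j+1}$ and handle the second claim by unfolding the definition of $w_\lambda$ with a short case analysis on whether $m$ or $j$ vanishes. If anything, you are slightly more explicit about the $m=0$ case, which the paper treats only implicitly.
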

\begin{proof}
We have that $(m+j,n) \in \mathcal{N}_\lambda$ is equivalent to $0 \leq n \leq \lambda_{m+j+1}$.  We also have that $\lambda_{m+j+1} = g^m(\lambda)_{j+1}$, so $0 \leq n \leq g^m(\lambda)_{j+1}$.  This is equivalent to $(j,n) \in \mathcal{N}_{g^m(\lambda)}$.

If $m=j=0$, then the second part of our lemma is clearly true.  

Assume $j = 0$ and $m > 0$.  Then
\begin{equation*}
w_\lambda(m+j,n) = w_\lambda(m,n) = \lambda_1 + ... + \lambda_m + n = \lambda_1 + ... + \lambda_m + w_{g^m(\lambda)}(0,n),
\end{equation*}
where the final equality is true since $(m,n) \in \mathcal{N}_\lambda$ is equivalent to $(0,n) \in \mathcal{N}_{g^m(\lambda)}$. 

Finally, suppose $m,j > 0$.  Then
\begin{align*}
w_\lambda(m+j,n) 	&= \lambda_1 + ... + \lambda_m + \lambda_{m+1} + ... + \lambda_{m+j} + n \\
			&= \lambda_1 + ... + \lambda_m + g^m(\lambda)_1 + ... + g^m(\lambda)_j + n.
\end{align*}
Note that since $(m+j,n) \in \mathcal{N}_\lambda$ is equivalent to $(j,n) \in \mathcal{N}_{g^m(\lambda)}$, we have
\begin{equation*}
w_\lambda(m+j,n) = \lambda_1 + ... + \lambda_m + w_{g^m(\lambda)}(j,n).
\end{equation*}
\end{proof}
The bijection $w_\lambda$ mainly serves as a way to show that there is a ``natural'' well-ordering for the set $\mathcal{N}_\lambda^<$, which allows us to meaningfully index sequences by $\mathcal{N}_\lambda^<$ and, as we shall see later, define the notion of a maximal element of a finite subset of $\mathcal{N}_\lambda$.

We define the \textit{one-sided convergents} (or \textit{semiconvergents}) to $\lambda$ as the fractions
\begin{equation*}
\frac{P_{m,n}(\lambda)}{Q_{m,n}(\lambda)} = \begin{cases}
[0; \lambda_1,...,\lambda_m]    &\text{ if } n = 0, \\
[0; \lambda_1,...,\lambda_m,n]  &\text{ if } n > 0,
\end{cases}
\end{equation*}
Indeed, this formula is compatible with the indexing $w_\lambda$ in that
\begin{equation*}
\frac{P_{m,0}(\lambda)}{Q_{m,0}(\lambda)} = \frac{P_{m-1,\lambda_m}(\lambda)}{Q_{m-1,\lambda_m}(\lambda)}.
\end{equation*}

A standard result in the theory of continued fractions is that for $(m,n) \in \mathcal{N}_\lambda^<$, we have
\begin{equation}
\label{onesideconv}
\frac{P_{m,n}(\lambda)}{Q_{m,n}(\lambda)} = \begin{cases} 
    \frac{np_m(\lambda) + p_{m-1}(\lambda)}{nq_m(\lambda) + q_{m-1}(\lambda)},  &\text{ if } n \neq 0 \\[2mm]
	\frac{p_m(\lambda)}{q_m(\lambda)} &\text{ if } n = 0.
\end{cases}
\end{equation}
One way to interpret these fractions is as being the best rational approximates of $\lambda$ from one ``direction''.  In particular, borrowing notation from the beginning of section 2.2 in \cite{PR18}, we have
\begin{align*}
\left( \frac{P_{m,n}}{Q_{m,n}} \right)_{(m,n) \in \mathcal{N}_\lambda^<, m \text{ even}}  &= \left( \frac{p_k'}{q_k'} \right)_{k \in \N}, \text{ and} \\
\left( \frac{P_{m,n}}{Q_{m,n}} \right)_{(m,n) \in \mathcal{N}_\lambda^<, m \text{ odd}}   &= \left( \frac{p_k''}{q_k''} \right)_{k \in \N},
\end{align*}
where $(p_k'/q_k')_k$ are the \textit{best rational approximates from above} in the sense that $p_k'/q_k' > \lambda$ and for all rational numbers $r/s \neq p_k'/q_k'$ such that $r/s > \lambda$ and $1 \leq s < q_{k+1}'$, we have
\begin{equation}
\label{errorabove}
0 < |q_k'\lambda - p_k'| < |s\lambda - r|,
\end{equation}
and in a similar fashion $(p_k''/q_k'')_k$ are the \textit{best rational approximates from below} in the same sense except that $p_k''/q_k'' < \lambda$ and
\begin{equation}
\label{errorbelow}
0 < |q_k''\lambda - p_k''| < |s\lambda - r|,
\end{equation}
holds for $r/s \neq p_k''/q_k''$ such that $r/s < \lambda$ and $1 \leq s < q_{k+1}''$.

We define the sequence $(\Delta_{m,n}(\lambda))_{(m,n) \in \mathcal{N}_\lambda}$ by
\begin{equation}
\label{semiconverrors}
\Delta_{m,n}(\lambda) = Q_{m,n}(\lambda)\lambda - P_{m,n}(\lambda).
\end{equation}
We see immediately from the above discussion that $P_{m,n}(\lambda)/Q_{m,n}(\lambda) < \lambda$ if and only if $m$ is odd and $n > 0$ or $m$ is even and $n = 0$, that is
\begin{equation}
\label{oddpositive}
\Delta_{m,n}(\lambda) > 0 \text{ if and only if } \begin{cases}
\text{$m$ is even and $n = 0$, or} \\
\text{$m$ is odd and $n > 0$.}
\end{cases}
\end{equation}

By expanding the definitions of $P_{m,n}$ and $Q_{m,n}$, we see
\begin{equation}
\label{Deltan}
\begin{aligned}
\Delta_{m,n}(\lambda) 	&= n(q_m\lambda - p_m) + q_{m-1}\lambda - p_{m-1} \\
							&= n\Delta_{m,0}(\lambda) + \Delta_{m-1,0}(\lambda),
\end{aligned}
\end{equation}
for $(m,n) \in \mathcal{N}_\lambda$ with $m \geq 1$.  Moreover, by expanding the recurrence relation for $p_m$ and $q_m$ and rearranging terms, we have the additional property
\begin{align*}
\Delta_{m,0}(\lambda) 	&= q_m\lambda - p_m \\
							&= \lambda_m(q_{m-1}\lambda - p_{m-1}) + q_{m-2}\lambda - p_{m-2} \\
							&=\lambda_m\Delta_{m-1,0}(\lambda) + \Delta_{m-2,0}(\lambda),
\end{align*}
for $m \in \N, m \geq 2$.

Note that in agreement with the function $w_\lambda$, we have $\Delta_{m-1,\lambda_m}(\lambda) = \Delta_{m,0}(\lambda)$.  A result by Bates et al. \cite{BBT05} presents an interesting connection between iterates of the Gauss map and consecutive errors in the approximation of $\lambda$ by its convergents.

\begin{lemma}[Theorem 10 of \cite{BBT05}]
Let $\lambda = [0; \lambda_1, \lambda_2, ...] \in [0,1) \setminus \mathbb{Q}$.  For all $m \in \N$,
\begin{equation}
\label{Giterate}
g^m(\lambda) = \frac{q_m\lambda - p_m}{p_{m-1} - q_{m-1}\lambda}.
\end{equation}
\end{lemma}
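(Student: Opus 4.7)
The plan is to prove \eqref{Giterate} by induction on $m$, using the base conventions $p_{-1} = 1$, $q_{-1} = 0$ from \eqref{negative} together with $p_0 = 0$, $q_0 = 1$ from \eqref{convergents} (since $\lambda \in [0,1)$ forces $\lambda_0 = 0$). For the base case $m = 0$, the right-hand side of \eqref{Giterate} simplifies immediately to
$$\frac{q_0\lambda - p_0}{p_{-1} - q_{-1}\lambda} = \frac{\lambda}{1} = \lambda = g^0(\lambda),$$
so the identity holds at $m=0$.

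For the inductive step, I would assume the identity at stage $m$ and first invoke the standard fact that if $\lambda = [0;\lambda_1, \lambda_2, \ldots]$ then $g^m(\lambda) = [0;\lambda_{m+1}, \lambda_{m+2}, \ldots]$, which is essentially the shift interpretation of $g$ already used after the definition of the Gauss map in this section. This yields $\lfloor 1/g^m(\lambda)\rfloor = \lambda_{m+1}$, and therefore $g^{m+1}(\lambda) = 1/g^m(\lambda) - \lambda_{m+1}$. Substituting the inductive hypothesis for $g^m(\lambda)$ and combining over a common denominator gives
$$g^{m+1}(\lambda) = \frac{p_{m-1} - q_{m-1}\lambda}{q_m\lambda - p_m} - \lambda_{m+1} = \frac{(p_{m-1} + \lambda_{m+1}p_m) - (q_{m-1} + \lambda_{m+1}q_m)\lambda}{q_m\lambda - p_m}.$$
The Euclidean recursions from \eqref{convergents} collapse the numerator to $p_{m+1} - q_{m+1}\lambda$, and flipping the sign of numerator and denominator gives the desired
$$g^{m+1}(\lambda) = \frac{q_{m+1}\lambda - p_{m+1}}{p_m - q_m\lambda}.$$

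I do not anticipate a substantive obstacle: the identity is fundamentally algebraic once the floor $\lambda_{m+1}$ is identified, and the continued-fraction prerequisites are classical. The only delicate points are the base-case bookkeeping (the convention $p_{-1} = 1$, $q_{-1} = 0$ must produce a denominator of $1$ rather than $0$) and sign tracking: the quantity $q_m\lambda - p_m$ alternates in sign with $m$, so the sign flip between the numerator of $g^{m+1}(\lambda)$ and the denominator $p_m - q_m\lambda$ has to be performed cleanly to ensure $g^{m+1}(\lambda) \in (0,1)$ for irrational $\lambda$. An alternative, non-inductive route would be to start from the well-known Möbius representation $\lambda = (p_{m-1} + p_m\, g^m(\lambda))/(q_{m-1} + q_m\, g^m(\lambda))$ and solve directly for $g^m(\lambda)$; I prefer the inductive version as it keeps the dependence on the recurrences \eqref{convergents} explicit.
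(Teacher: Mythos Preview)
Your inductive argument is correct and is the standard route to this identity. Note, however, that the paper does not supply its own proof of this lemma: it is quoted as Theorem~10 of \cite{BBT05} and used as a black box, so there is no in-paper argument to compare against. Your proof is entirely self-contained and would serve as a suitable replacement for the citation if one were desired.
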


Equation \eqref{Giterate} can be equivalently formulated as
\begin{equation}
\label{gerror}
g^m(\lambda) = -\frac{\Delta_{m,0}(\lambda)}{\Delta_{m-1,0}(\lambda)}.
\end{equation}

\begin{lemma}
\label{fractions}
Let $\lambda = [0; \lambda_1, \lambda_2, ...] \in [0,1) \setminus \mathbb{Q}$.  For all $(m,n) \in \mathcal{N}_\lambda$ with $m \geq 1$,
\begin{equation}
\label{GN}
\Delta_{0,n}(g^m(\lambda)) = -\frac{\Delta_{m,n}(\lambda)}{\Delta_{m-1,0}(\lambda)}.
\end{equation}

\end{lemma}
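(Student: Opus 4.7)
The plan is to compute both sides of \eqref{GN} directly by unwinding the definitions, using \eqref{Deltan} to isolate the $n$-dependence of $\Delta_{m,n}(\lambda)$ and then applying \eqref{gerror} to convert the resulting ratio of consecutive $\Delta_{m,0}$'s into a value of the Gauss map. The whole argument is an algebraic identity, so no approximation or analytic input is needed beyond what is already collected in this subsection.

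First I would unwind the left-hand side. For the index $m = 0$ the semiconvergent data is $P_{0,0}/Q_{0,0} = 0/1$ and $P_{0,n}/Q_{0,n} = 1/n$ for $n \geq 1$, so that
\begin{equation*}
\Delta_{0,n}(g^m(\lambda)) = \begin{cases} g^m(\lambda) & \text{if } n = 0, \\ n\, g^m(\lambda) - 1 & \text{if } n \geq 1. \end{cases}
\end{equation*}
I would then treat the right-hand side in the same two cases. If $n = 0$ the claim is just \eqref{gerror} written in the form $g^m(\lambda) = -\Delta_{m,0}(\lambda)/\Delta_{m-1,0}(\lambda)$. If $n \geq 1$, substituting \eqref{Deltan} into the numerator and splitting the fraction gives
\begin{equation*}
-\frac{\Delta_{m,n}(\lambda)}{\Delta_{m-1,0}(\lambda)} = -n\,\frac{\Delta_{m,0}(\lambda)}{\Delta_{m-1,0}(\lambda)} - 1,
\end{equation*}
and another application of \eqref{gerror} collapses the first term to $n\, g^m(\lambda)$, matching the left-hand side.

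The main obstacle is essentially bookkeeping: one must isolate the $n = 0$ edge case, because \eqref{Deltan} is not meaningful there (it would spuriously identify $\Delta_{m,0}$ with $\Delta_{m-1,0}$), and one must note that $\Delta_{m-1,0}(\lambda) \neq 0$ so that the division is legitimate, which follows from the irrationality of $\lambda$ and the fact that $p_{m-1}/q_{m-1}$ is a convergent rather than $\lambda$ itself. Once these two checks are made, the proof reduces to the direct calculation above.
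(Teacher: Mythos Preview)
Your proposal is correct and follows essentially the same route as the paper's proof: both arguments compute $\Delta_{0,n}(g^m(\lambda))$ explicitly as $g^m(\lambda)$ when $n=0$ and $n\,g^m(\lambda)-1$ when $n\geq 1$, and then match this with the right-hand side using \eqref{gerror} and \eqref{Deltan}. The only difference is cosmetic---the paper transforms the left-hand side into the right-hand side, while you compute each side separately and compare---and your explicit remark that the $n=0$ case must be isolated because \eqref{Deltan} does not apply there is in fact a small clarification over the paper's presentation.
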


\begin{proof}
For $n=0$, $\Delta_{0,0}(g^m(\lambda)) = g^m(\lambda)$, so \eqref{GN} holds.

Next, observe that
\begin{equation*}
\Delta_{j,0}(g^m(\lambda)) = g^m(\lambda)_j\Delta_{j-1,0}(g^m(\lambda)) + \Delta_{j-2,0}(g^m(\lambda)).
\end{equation*}
Since $g^m(\lambda)_j = \lambda_{m+j}$, this becomes
\begin{equation*}
\Delta_{j,0}(g^m(\lambda)) = \lambda_{m+j}\Delta_{j-1,0}(g^m(\lambda)) + \Delta_{j-2,0}(g^m(\lambda)).
\end{equation*}
We thus see that
\begin{equation*}
\Delta_{0,n}(g^m(\lambda)) = n(q_0g^m(\lambda) - p_0) + q_{-1}g^m(\lambda) - p_{-1},
\end{equation*}
and by recalling $p_{-1}$, $q_{-1}$, $p_0$, and $q_0$ from \eqref{convergents}, we get
\begin{equation*}
\Delta_{0,n}(g^m(\lambda)) = ng^m(\lambda) -1.
\end{equation*}
Using \eqref{gerror}, we can substitute $g^m(\lambda)$ and rearrange terms to get
\begin{align*}
\Delta_{0,n}(g^m(\lambda)) 	&= -n\left(\frac{\Delta_{m,0}(\lambda)}{\Delta_{m-1,0}(\lambda)} + 1 \right) \\
						&= -\frac{n\Delta_{m,0}(\lambda) + \Delta_{m-1,0}(\lambda)}{\Delta_{m-1,0}(\lambda)}.
\end{align*}
Finally, from this and \eqref{Deltan} we get \eqref{GN}.
\end{proof}

\begin{corollary}
For all $(m+j,n) \in \mathcal{N}_\lambda$, where $m,j \in \N$, $m \geq 1$, we have
\begin{equation}
\label{gchange}
\Delta_{0,n}(g^{m+j}(\lambda)) = -\frac{\Delta_{m,n}(g^j(\lambda))}{\Delta_{m-1,0}(g^j(\lambda))}
\end{equation}
\end{corollary}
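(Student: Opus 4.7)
The plan is to obtain the corollary as an immediate consequence of Lemma \ref{fractions} applied not to $\lambda$ itself but to the shifted irrational $g^j(\lambda)$. The statement of Lemma \ref{fractions} makes no use of the particular point $\lambda$ beyond its continued fraction structure and the hypothesis $(m,n) \in \mathcal{N}_\lambda$, so it should transport verbatim to any $\mu \in [0,1) \setminus \Q$.

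First I would set $\mu := g^j(\lambda)$ and verify the admissibility of the hypothesis: by Proposition \ref{interchange}, $(m+j,n) \in \mathcal{N}_\lambda$ is equivalent to $(m,n) \in \mathcal{N}_{g^j(\lambda)} = \mathcal{N}_\mu$, so the condition of Lemma \ref{fractions} with the integer pair $(m,n)$ and the irrational $\mu$ is met. Then Lemma \ref{fractions} yields
\begin{equation*}
\Delta_{0,n}(g^m(\mu)) = -\frac{\Delta_{m,n}(\mu)}{\Delta_{m-1,0}(\mu)}.
\end{equation*}
Substituting $\mu = g^j(\lambda)$ on the right and using the semigroup identity $g^m \circ g^j = g^{m+j}$ on the left delivers exactly \eqref{gchange}.

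There is no real obstacle here; the only step that could use a brief justification is the semigroup identity for $g$, which is simply the definition of iteration of a self-map of $[0,1]$, and the invocation of Proposition \ref{interchange} to translate the index hypothesis. Since $\Delta_{m,n}$, $Q_{m,n}$, $P_{m,n}$ and the continued fraction coefficients are all defined intrinsically from the expansion of the argument, no computation beyond relabelling is needed.
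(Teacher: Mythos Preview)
Your proposal is correct and follows exactly the paper's own argument: the paper's proof is the single line ``This follows from Lemma \ref{fractions} with $g^j(\lambda)$ instead of $\lambda$.'' Your added justification via Proposition \ref{interchange} for why $(m,n) \in \mathcal{N}_{g^j(\lambda)}$ is a nice touch that the paper leaves implicit.
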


\begin{proof}
This follows from Lemma \ref{fractions} with $g^j(\lambda)$ instead of $\lambda$.
\end{proof}

Our next Lemma is an important tool for determining scaling properties of these errors.

\begin{lemma}
\label{Deltas}
Let $\lambda = [0; \lambda_1, \lambda_2, ...] \in [0,1) \setminus \mathbb{Q}$.  For all $(m+j,n) \in \mathcal{N}_\lambda$ such that $m, j \in \N$ and $m \geq 1$,
\begin{equation}
\label{Delta}
\Delta_{j,n}(g^m(\lambda)) = -\frac{\Delta_{m+j,n}(\lambda)}{\Delta_{m-1,0}(\lambda)}.
\end{equation}
\end{lemma}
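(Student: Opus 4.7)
The plan is to proceed by induction on $j$, first reducing to the case $n = 0$. Applying the recurrence in \eqref{Deltan} with $g^m(\lambda)$ in place of $\lambda$ gives $\Delta_{j,n}(g^m(\lambda)) = n\Delta_{j,0}(g^m(\lambda)) + \Delta_{j-1,0}(g^m(\lambda))$ for $j \geq 1$, and the same recurrence for $\lambda$ gives $\Delta_{m+j,n}(\lambda) = n\Delta_{m+j,0}(\lambda) + \Delta_{m+j-1,0}(\lambda)$. Thus, once the identity is established at $n=0$ for every $j \geq 0$, linearity in $n$ propagates it to all admissible pairs $(j,n)$. The case $j=0$ (for all $n$) is already Lemma \ref{fractions}, so it suffices to prove $\Delta_{j,0}(g^m(\lambda)) = -\Delta_{m+j,0}(\lambda)/\Delta_{m-1,0}(\lambda)$ for all $j \geq 0$.

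The key observation driving the induction is that the $k$-th continued fraction coefficient of $g^m(\lambda)$ is $g^m(\lambda)_k = \lambda_{m+k}$. Consequently, the three-term recurrence displayed above \eqref{Giterate}, when applied to $g^m(\lambda)$, reads $\Delta_{k,0}(g^m(\lambda)) = \lambda_{m+k}\,\Delta_{k-1,0}(g^m(\lambda)) + \Delta_{k-2,0}(g^m(\lambda))$ for $k \geq 2$, while the analogous recurrence for $\lambda$ gives $\Delta_{m+k,0}(\lambda) = \lambda_{m+k}\,\Delta_{m+k-1,0}(\lambda) + \Delta_{m+k-2,0}(\lambda)$. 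Dividing the second identity by $-\Delta_{m-1,0}(\lambda)$ shows that the two sequences $\{\Delta_{k,0}(g^m(\lambda))\}_{k \geq 0}$ and $\{-\Delta_{m+k,0}(\lambda)/\Delta_{m-1,0}(\lambda)\}_{k \geq 0}$ satisfy the same three-term recurrence, so the induction reduces to matching initial conditions.

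For the base cases, $k=0$ is exactly \eqref{gerror}, which asserts $\Delta_{0,0}(g^m(\lambda)) = g^m(\lambda) = -\Delta_{m,0}(\lambda)/\Delta_{m-1,0}(\lambda)$. For $k=1$, since $g^m(\lambda) \in (0,1)$ one has $p_1 = 1$ and $q_1 = \lambda_{m+1}$ in its continued fraction data, so $\Delta_{1,0}(g^m(\lambda)) = \lambda_{m+1} g^m(\lambda) - 1$; substituting \eqref{gerror} and combining with the $\lambda$-recurrence for $\Delta_{m+1,0}$ yields $-\Delta_{m+1,0}(\lambda)/\Delta_{m-1,0}(\lambda)$, as required. (An elegant alternative is to use the convention $p_{-1} = 1$, $q_{-1} = 0$ from \eqref{negative}, treat $k = -1$ as the second base case via $\Delta_{-1,0}(g^m(\lambda)) = -1 = -\Delta_{m-1,0}(\lambda)/\Delta_{m-1,0}(\lambda)$, and thereby avoid the separate $k=1$ verification.) I do not anticipate any substantive obstacle: the argument is essentially bookkeeping, and the only point requiring care is ensuring that the continued fraction coefficients of $g^m(\lambda)$ align correctly with those of $\lambda$ shifted by $m$.
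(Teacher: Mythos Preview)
Your argument is correct, but it differs from the paper's proof in both halves. For the $n=0$ case, the paper does not induct on the three-term recurrence; instead it writes the ratio $\Delta_{m+j,0}(\lambda)/\Delta_{m-1,0}(\lambda)$ as the telescoping product $\prod_{k=0}^j \Delta_{m+k,0}(\lambda)/\Delta_{m+k-1,0}(\lambda)$, converts each factor via \eqref{gerror} to $-g^{m+k}(\lambda)$, and then uses the corollary \eqref{gchange} to re-express everything in terms of $g^m(\lambda)$ before collapsing the product. For general $n$, the paper again proceeds multiplicatively, splitting $\Delta_{m+j,n}(\lambda)/\Delta_{m-1,0}(\lambda)$ as $\bigl(\Delta_{m+j,n}(\lambda)/\Delta_{m+j-1,0}(\lambda)\bigr)\cdot\bigl(\Delta_{m+j-1,0}(\lambda)/\Delta_{m-1,0}(\lambda)\bigr)$ and invoking \eqref{GN} and \eqref{gchange}, whereas you reduce to $n=0$ additively via the linear relation \eqref{Deltan}. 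Your route is more self-contained: it needs only \eqref{gerror} and the standard convergent recurrence, bypassing Lemma~\ref{fractions} and its corollary entirely (except for the trivial $j=0$ citation). The paper's route, by contrast, leans on the preceding lemmas and has a telescoping flavour that makes the role of the Gauss map more visible. Both are short; yours is arguably the more elementary bookkeeping exercise you describe.
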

\begin{proof}
Let us prove first that \eqref{Delta} holds for $n=0$.  By multiplying and dividing by $\Delta_{m+k-1,0}$ for all $0 \leq k \leq j$, we get
\begin{equation*}
\frac{\Delta_{m+j,0}(\lambda)}{\Delta_{m-1,0}(\lambda)} = \prod_{k=0}^j \frac{\Delta_{m+k,0}(\lambda)}{\Delta_{m+k-1,0}(\lambda)}
\end{equation*}
Then, using \eqref{gerror}, we get
\begin{equation*}
\frac{\Delta_{m+j,0}(\lambda)}{\Delta_{m-1,0}(\lambda)} = \prod_{k=0}^j -\Delta_{0,0}(g^{m+k}(\lambda)).
\end{equation*}
Rearranging this last expression and using  \eqref{gchange}, we get
\begin{equation*}
\frac{\Delta_{m+j,0}(\lambda)}{\Delta_{m-1,0}(\lambda)} = (-1)^{j+1} \Delta_{0,0}(g^m(\lambda)) \prod_{k=1}^j -\frac{\Delta_{k,0}(g^m(\lambda))}{\Delta_{k-1,0}(g^m(\lambda))}
\end{equation*}
We then simplify the product by cancelling terms in the numerator and denominator to get
\begin{align*}
\frac{\Delta_{m+j,0}(\lambda)}{\Delta_{m-1,0}(\lambda)} 	
	&= (-1)^{j+1} \Delta_{0,0}(g^m(\lambda)) \left( (-1)^j \frac{\Delta_{j,0}(g^m(\lambda))}{\Delta_{0,0}(g^m(\lambda))} \right) \\
	&= -\Delta_{j,0}(g^m(\lambda)).
\end{align*}
Finally, for general $(m+j,n) \in \mathcal{N}_\lambda$, $m,j \in \N$, $m \geq 1$, we have
\begin{equation*}
\frac{\Delta_{m+j,n}(\lambda)}{\Delta_{m-1,0}(\lambda)} = \frac{\Delta_{m+j,n}(\lambda)}{\Delta_{m+j-1,0}(\lambda)} \frac{\Delta_{m+j-1,0}(\lambda)}{\Delta_{m-1,0}(\lambda)}
\end{equation*}
Using \eqref{GN} and \eqref{Delta} for $n=0$, we get
\begin{equation*}
\frac{\Delta_{m+j,n}(\lambda)}{\Delta_{m-1,0}(\lambda)} = \Delta_{0,n}(g^{m+j}(\lambda)) \Delta_{j-1,0}(g^m(\lambda)),
\end{equation*}
and then using \eqref{gchange} gives us
\begin{align*}
\frac{\Delta_{m+j,n}(\lambda)}{\Delta_{m-1,0}(\lambda)} 
	&= -\frac{\Delta_{j,n}(g^m(\lambda))}{\Delta_{j-1,0}(g^m(\lambda))} \Delta_{j-1,0}(g^m(\lambda)) \\
	&= -\Delta_{j,n}(g^m(\lambda)),
\end{align*}
as required.
\end{proof}

With these properties in mind, we will now define the sets which will partition a neighbourhood of the middle cone $C_c$.  Recall that $\mathcal{N}_\lambda^<$ denotes the subset of $\mathcal{N}_\lambda$ defined by
\begin{equation*}
\mathcal{N}_\lambda^< = \{(m,n) \in \N^2 : 0 \leq n < \lambda_{m+1} \}.
\end{equation*}
For $(m,n) \in \mathcal{N}_\lambda^<$, let $S_{m,n}(\lambda)$ be the set defined by

\begin{equation}
\label{defSmn}
S_{m,n}(\lambda) =	
	\begin{cases}
        \begin{aligned}
			&(C_0 - \Delta_{m,0}(\lambda)) \cap C_c \cap (C_c - \Delta_{m,n+1}(\lambda)) \\
			&\cap  (C_{d+1} - (n\Delta_{m,0}(\lambda) + \Delta_{m-1,0}(\lambda)))
		\end{aligned},
			&\text{if } m \text{ is even}, \\[6mm]
		\begin{aligned}
			&(C_0 - (n\Delta_{m,0}(\lambda) + \Delta_{m-1,0}(\lambda))) \cap C_c \\
			&\cap (C_c - \Delta_{m,n+1}(\lambda)) \cap (C_{d+1} - \Delta_{m,0}(\lambda)) 	
		\end{aligned},
			&\text{if } m \text{ is odd}.
	\end{cases}
\end{equation}
For brevity, we will drop the argument $\lambda$ from $S_{m,n}(\lambda)$ if it is unambiguous.  Additionally for the purposes of the case that $m=0$, and recalling \eqref{negative}, we have
\begin{equation*}
\Delta_{-1,0}(\lambda) = q_{-1}\lambda - p_{-1} = -1.
\end{equation*}
Recall from \eqref{oddpositive} that for $(m,n) \in \mathcal{N}_\lambda^<$, $\Delta_{m,n} > 0$ if and only if $m$ is odd and $n > 0$ or $m$ is even and $n = 0$.  Thus we can clearly see that $S_{m,n} \neq \emptyset$ for all $(m,n) \in \mathcal{N}_\lambda^<$.  Additionally, since every point in $S_{m,n}$ has positive imaginary part, the boundary of $S_{m,n}$ consists of segments of the non-horizontal boundary lines of $C_0$, $C_c$, and $C_{d+1}$, and all of these lines either have angle $\alpha_0$ or $\pi - \alpha_{d+1}$.  Thus, $S_{m,n}$ is a quadrilateral, and its opposing sides must be parallel, so it is a parallelogram.

Indeed, since opposite edges of $S_{m,n}$ are parallel, the side lengths of $S_{m,n}$ are uniquely determined by the horizontal distances between opposing edges.  In the case that $m$ is even, these are precisely the distances between the vertices of the pairs of cones $C_0 - \Delta_{m,0}$ and $C_c$, and $C_c - \Delta_{m,n+1}$ and $C_{d+1} - \Delta_{m,n}$.  In the case that $m$ is odd, the horizontal distances are determined by the distance between the vertices of pairs of cones $C_0 - \Delta_{m,n}$ and $C_c - \Delta_{m,n+1}$, and $C_c$ and $C_{d+1} - \Delta_{m,0}$.  Since $\Delta_{m,n+1} - \Delta_{m,n} = \Delta_{m,0}$, we know that these distances are equal.  Therefore, the side lengths of opposing edges of $S_{m,n}$ are equal and can be calculated as
\begin{equation}
\label{sidelengths}
\frac{\Delta_{m,n} \sin\alpha_0}{\sin(\alpha_0 + \alpha_{d+1})} \text{ and } \frac{\Delta_{m,n} \sin\alpha_{d+1}}{\sin(\alpha_0 + \alpha_{d+1})}
\end{equation}
These sidelengths are equal only when $\alpha_0 = \alpha_{d+1}$, in which case $S_{m,n}$ is a rhombus for all $(m,n) \in \mathcal{N}_\lambda^<$.  See figure \ref{pconstruct} for an example of the geometry of the construction of sets $S_{m,n}$.

\begin{figure}
    \centering
    \includegraphics[width=\linewidth]{{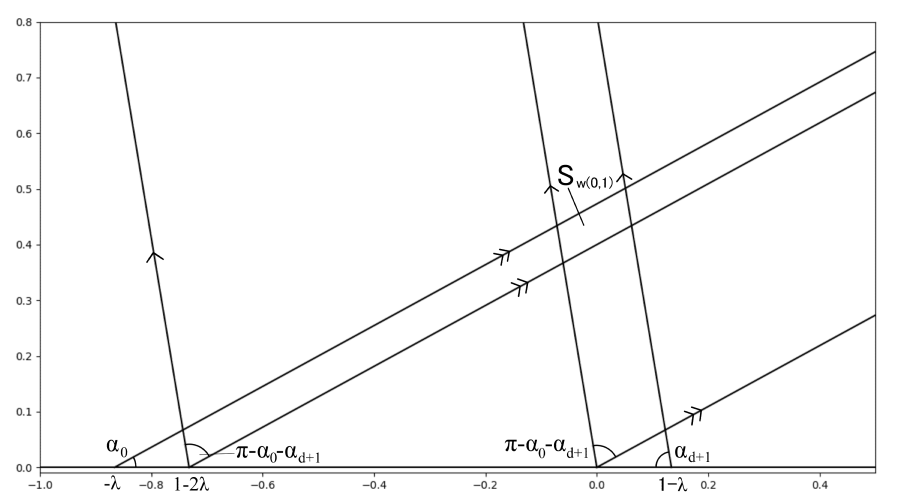}}
    \caption{An illustration of the construction of the parallelogram $S_{0,1}(\lambda)$ for the parameters in figure \ref{AsymmPlot1}.  Here, the angles shown indicate the cones used to construct $S_{1,0}(\lambda)$.  In this case, the vertices of these cones can be verified via \eqref{defSmn} to be $-\Delta_{0,0}(\lambda) = -\lambda$, $-\Delta_{1,1}(\lambda) = 1 - 2\lambda$, 0 and $-(\Delta_{0,0}(\lambda) + \Delta_{-1,0}) = 1 - \lambda$.}
    \label{pconstruct}
\end{figure}

An interesting property of these sets can be found by an application of Lemma \ref{Deltas}.

\begin{theorem}
\label{Scalingprop}
Let $\lambda \in [0,1) \setminus \mathbb{Q}$.  For all $(m+j,n) \in \mathcal{N}_\lambda^<$ such that $m,j \in \N$ and $m \geq 2$ is even,
\begin{equation*}
\frac{1}{|\Delta_{m-1,0}(\lambda)|}S_{m+j,n}(\lambda) = S_{j,n}(g^m(\lambda)).
\end{equation*}
\end{theorem}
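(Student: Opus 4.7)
The plan is to rewrite $\frac{1}{|\Delta_{m-1,0}(\lambda)|} S_{m+j,n}(\lambda)$ as an intersection of translated cones and then use Lemma \ref{Deltas} to identify each translation vector with the corresponding $\Delta$-error of $g^m(\lambda)$. The key preliminary observation, already used implicitly in Proposition \ref{scale}, is that each $C_i$ is a union of rays emanating from the origin, so $aC_i = C_i$ for every $a > 0$ and hence
$$\frac{1}{a}(C_i - v) = C_i - \frac{v}{a}$$
for every real shift $v$. Applying this with $a = |\Delta_{m-1,0}(\lambda)|$ to each of the four translated cones in the definition \eqref{defSmn} of $S_{m+j,n}(\lambda)$ reduces the problem to understanding the ratios $\Delta_{m+j,0}(\lambda)/|\Delta_{m-1,0}(\lambda)|$, $\Delta_{m+j,n+1}(\lambda)/|\Delta_{m-1,0}(\lambda)|$, and $\Delta_{m+j-1,0}(\lambda)/|\Delta_{m-1,0}(\lambda)|$.

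Next, I would exploit the parity hypothesis. Since $m \geq 2$ is even, $m-1$ is odd with second index $0$, so by \eqref{oddpositive} we have $\Delta_{m-1,0}(\lambda) < 0$ and thus $|\Delta_{m-1,0}(\lambda)| = -\Delta_{m-1,0}(\lambda)$. Lemma \ref{Deltas} applied with the pair $(m,j)$ or $(m,j-1)$ then gives
$$\frac{\Delta_{m+k,\ell}(\lambda)}{|\Delta_{m-1,0}(\lambda)|} = -\frac{\Delta_{m+k,\ell}(\lambda)}{\Delta_{m-1,0}(\lambda)} = \Delta_{k,\ell}(g^m(\lambda))$$
for $(k,\ell) \in \{(j,0),(j,n+1),(j-1,0)\}$ whenever $k \geq 0$. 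The boundary case $j = 0$ is consistent: the ratio $\Delta_{m-1,0}(\lambda)/|\Delta_{m-1,0}(\lambda)| = -1$ matches the convention $\Delta_{-1,0}(g^m(\lambda)) = -1$ from \eqref{negative}, so it requires no separate treatment.

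To finish, I would perform a two-line case analysis on the parity of $m+j$: since $m$ is even, $m+j$ has the same parity as $j$, so the branch of \eqref{defSmn} defining $S_{m+j,n}(\lambda)$ matches the branch defining $S_{j,n}(g^m(\lambda))$. Substituting the identifications above into each branch yields the desired equality term-by-term, including the combined shift $n\Delta_{m+j,0}(\lambda) + \Delta_{m+j-1,0}(\lambda)$ which scales to $n\Delta_{j,0}(g^m(\lambda)) + \Delta_{j-1,0}(g^m(\lambda))$ by linearity. The main obstacle is entirely clerical: keeping the sign conventions straight and verifying that both parity branches of \eqref{defSmn} align consistently, especially in the edge case $j=0$ where Lemma \ref{Deltas} cannot be invoked directly on the ratio $\Delta_{m-1,0}(\lambda)/\Delta_{m-1,0}(\lambda)$ but where the convention in \eqref{negative} fills the gap.
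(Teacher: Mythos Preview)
Your proposal is correct and follows essentially the same route as the paper: scale each translated cone using $aC_i = C_i$, invoke Lemma~\ref{Deltas} to convert the resulting ratios $\Delta_{m+k,\ell}(\lambda)/\Delta_{m-1,0}(\lambda)$ into $\Delta_{k,\ell}(g^m(\lambda))$, and match parities via $m+j \equiv j \pmod 2$. Your treatment of the edge case $j=0$ (where the ratio $\Delta_{m-1,0}(\lambda)/\Delta_{m-1,0}(\lambda)$ falls outside the literal scope of Lemma~\ref{Deltas} and the convention $\Delta_{-1,0}=-1$ from \eqref{negative} is needed) is actually more explicit than the paper's own proof, which glosses over this point.
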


\begin{proof}
Firstly, recall from \eqref{oddpositive} that since $m$ is even, $\Delta_{m-1,0}(\lambda) < 0$, and thus 
\begin{equation*}
\frac{1}{|\Delta_{m-1,0}(\lambda)|} = \frac{1}{-\Delta_{m-1,0}(\lambda)} > 0.
\end{equation*}
Hence,
\begin{equation*}
\frac{1}{|\Delta_{m-1,0}(\lambda)|}(C_k - x) = C_k + \frac{x}{\Delta_{m-1,0}(\lambda)},
\end{equation*}
for all $k \in \{1,...,d\}$ and all $x \in \R$.  Thus, we have
\begingroup
\allowdisplaybreaks
\begin{align*}
&\frac{1}{|\Delta_{m-1,0}(\lambda)|}S_{m+j,n}(\lambda) \\
&= 	\begin{cases}
	\begin{aligned}
	&\left( C_0 + \frac{ \Delta_{m+j,0}(\lambda) }{ \Delta_{m-1,0}(\lambda) } \right) \cap \left( C_c + \frac{ \Delta_{m+j,n+1}(\lambda) }{ \Delta_{m-1,0}(\lambda) } \right) \\[2mm]
	&\cap C_c \cap \left( C_{d+1} + \frac{ n\Delta_{m+j,0}(\lambda) + \Delta_{m+j-1,0}(\lambda) }{ \Delta_{m-1,0}(\lambda) } \right),
	\end{aligned}	&\text{if } m+j \text{ is even,} \\[11mm]
	\begin{aligned}
	&\left( C_0 + \frac{ n\Delta_{m+j,0}(\lambda) + \Delta_{m+j-1,0}(\lambda) }{ \Delta_{m-1,0}(\lambda) } \right) \cap C_c \\[2mm]
	&\cap \left( C_c + \frac{ \Delta_{m+j,n+1}(\lambda) }{ \Delta_{m-1,0}(\lambda) } \right) \cap \left( C_{d+1} + \frac{ \Delta_{m+j,0}(\lambda) }{ \Delta_{m-1,0}(\lambda) } \right),
	\end{aligned}	&\text{if } m+j \text{ is odd.}
	\end{cases}
\end{align*}
Using \eqref{Delta} many times give us
\begin{align*}
&\frac{1}{|\Delta_{m-1,0}(\lambda)|}S_{m+j,n}(\lambda) \\
&=	\begin{cases}
	\begin{aligned}
	&(C_0 - \Delta_{j,0}(g^m(\lambda))) \cap (C_c - \Delta_{j,n+1}(g^m(\lambda))) \\
	&\cap C_c \cap (C_{d+1} - (n\Delta_{j,0}(g^m(\lambda)) + \Delta_{j-1,0}(g^m(\lambda)))), 
	\end{aligned}	&\text{if } j \text{ is even,} \\[5mm]
	\begin{aligned}
	&(C_0 - (n\Delta_{j,0}(g^m(\lambda)) + \Delta_{j-1,0}(g^m(\lambda)))) \cap C_c \\
	&\cap (C_c - \Delta_{j,n+1}(g^m(\lambda))) \cap (C_{d+1} - \Delta_{j,0}(g^m(\lambda))),
	\end{aligned}	&\text{if } j \text{ is odd.}
	\end{cases}
\end{align*}
Finally, comparing this with \eqref{defSmn} gets
\begin{equation*}
\frac{1}{|\Delta_{m-1,0}(\lambda)|}S_{m+j,n}(\lambda) = S_{j,n}(g^m(\lambda)).
\end{equation*}
\endgroup
\end{proof}

\begin{figure}
    \centering
    \includegraphics[width=\linewidth]{{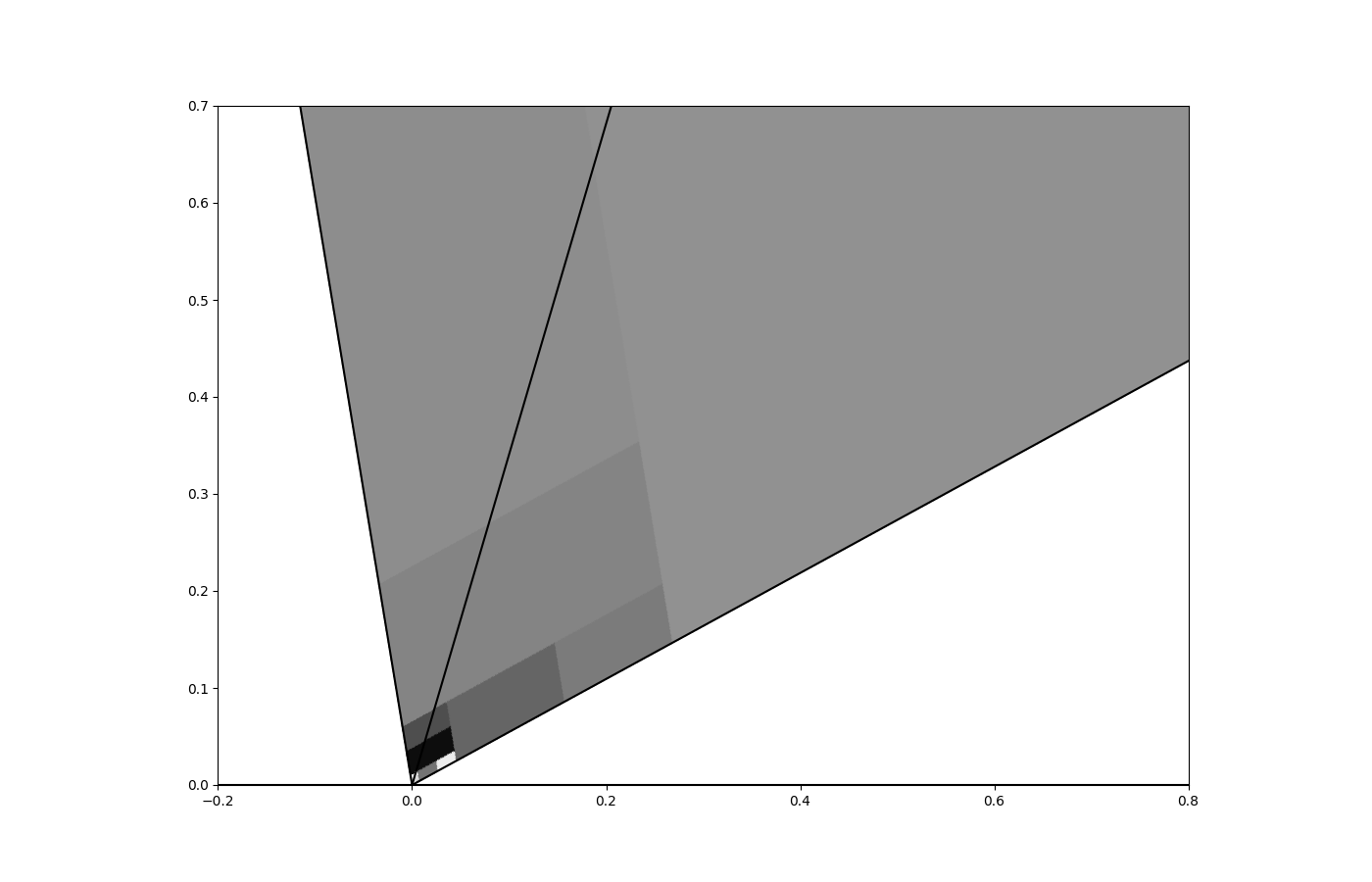}}
    \caption{An illustration of the $E$-image of the partition for the first return map $R_\kappa$ of the TCE $F_\kappa$ in figure \ref{AsymmPlot1}.  Observe the alternating stacks of parallelograms decreasing in size and cascading towards the origin.}
    \label{AsymmPart1}
\end{figure}

This theorem seems to suggest the possibility of infinite renormalizability of the first return maps to $C_c$ for a whole class of TCEs.  At the very least, if indeed the first return map of a TCE to $C_c$ is an  isometry on $S_{m,n}(\lambda)$ for $(m,n) \in \mathcal{N}_\lambda^<$ with $m \geq m_0$ and some $m_0 \in \N$, then at the very least the partition matches with its potential renormalization.

\begin{figure}
    \centering
    \includegraphics[width=\linewidth]{{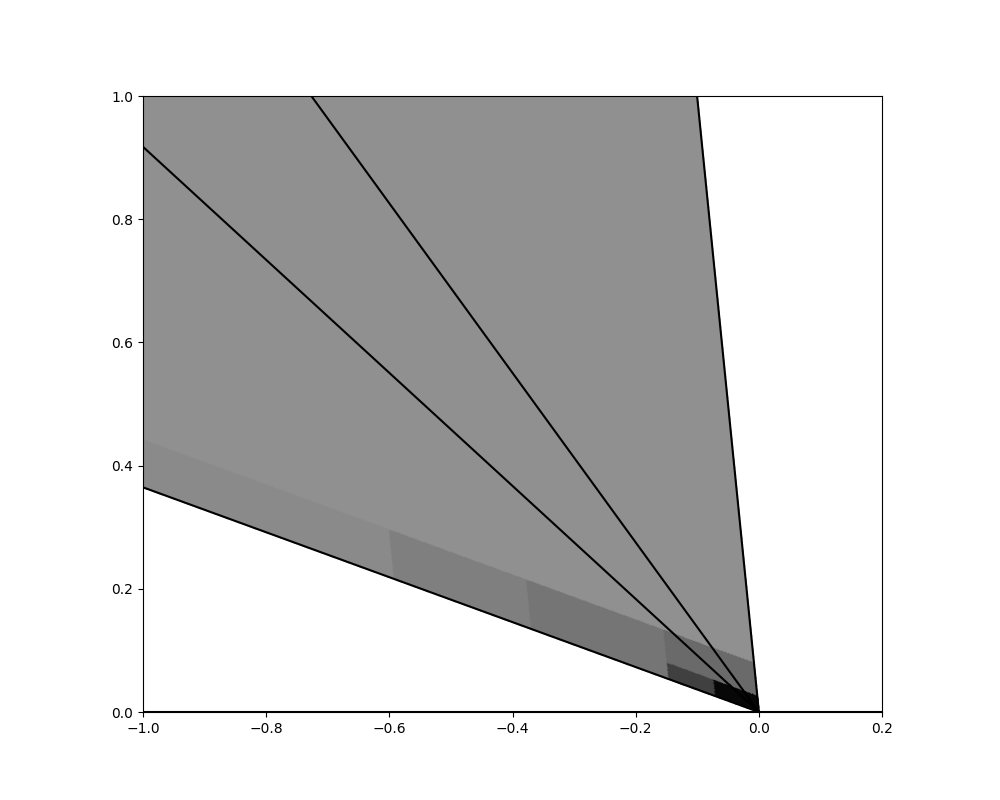}}
    \caption{A similar illustration to figure \ref{AsymmPart1} of the $E$-image of the partition for $R_\kappa$, but with the parameters from figure \ref{AsymmPlot2}.}
    \label{fig:enter-label}
\end{figure}

\section{Renormalization around zero}

In this section we investigate renormalizability of TCEs around the origin for a broad range of values of $\lambda$ and $\eta$.

Let $\lambda \in [0,1) \setminus \mathbb{Q}$ and $\eta \in \R$ such that $-\lambda < \eta = p - q\lambda < 1$ for some $p,q \in \N$.  Note that $\mathcal{N}_\lambda^<$ has a well-ordering $<'$ induced by the indexing function $w_\lambda$ so that
\begin{equation*}
(m,n) <' (r,s) \text{ if and only if } w_\lambda(m,n) < w_\lambda(r,s).
\end{equation*}
Thus, the notion of a ``maximal element'' of a finite subset of $\mathcal{N}_\lambda^<$ is well-defined.

Let $(m_0,n_0)$ be the largest element of $\mathcal{N}_\lambda^<$ such that 
\begin{equation}
\label{m0n0}
P_{m_0,n_0} < p \text{ or } Q_{m_0,n_0} < q.  
\end{equation}
The pair $(m_0,n_0)$ is well-defined since $p, q \geq 1$ but $P_{0,0} = 0$.  Thus, $w_\lambda(m_0,n_0) \geq w_\lambda(0,0)$.

Note that for all $(m,n) \in \mathcal{N}_\lambda^<$,
\begin{equation*}
 \Delta_{m,n} = Q_{m,n}\lambda - P_{m,n} = -\eta + (Q_{m,n} - q)\lambda - (P_{m,n} - p).
\end{equation*}
Define the sequence $(h_{m,n})_{(m,n) \in \mathcal{N}_\lambda^<}$ of positive integers by
\begin{equation}
\label{hmn}
h_{m,n} = (Q_{m,n} - q) + (P_{m,n} - p) + 1.
\end{equation}
We can establish some recurrence relations for the sequence $h_{m,n}$ using those of $P_{m,n}$ and $Q_{m,n}$.

\begin{proposition}
Let $(m,n) \in \mathcal{N}_\lambda^<$.  Then
\begin{equation*}
h_{m,n+1} = (n+1)h_{m,0} + h_{m-1,0} + (n+1)(p + q - 1).
\end{equation*}
Moreover, if $w_\lambda(m,n) > w_\lambda(m_0,n_0)$, then $h_{m,n} > 0$.
\end{proposition}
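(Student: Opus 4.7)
The plan is to prove both assertions by direct computation from the closed-form expressions in \eqref{onesideconv} for the semiconvergents together with the definition \eqref{hmn} of $h_{m,n}$.

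For the recurrence, I first note that since $n+1 \geq 1$, the formula $P_{m,n+1} = (n+1)p_m + p_{m-1}$ and $Q_{m,n+1} = (n+1)q_m + q_{m-1}$ from the $n \neq 0$ branch of \eqref{onesideconv} always applies. Substituting into \eqref{hmn} gives
\begin{equation*}
h_{m,n+1} = (n+1)(p_m + q_m) + (p_{m-1} + q_{m-1}) - (p+q) + 1.
\end{equation*}
From the $n = 0$ branch one reads off $h_{m,0} = (p_m + q_m) - (p+q) + 1$ and $h_{m-1,0} = (p_{m-1} + q_{m-1}) - (p+q) + 1$, and a one-line expansion shows that $(n+1)h_{m,0} + h_{m-1,0} + (n+1)(p+q-1)$ simplifies to exactly the right-hand side of the previous display, which yields the claimed identity.

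For the positivity claim, I use the maximality of $(m_0,n_0)$. The set of $(m,n) \in \mathcal{N}_\lambda^<$ satisfying $P_{m,n} < p$ or $Q_{m,n} < q$ is nonempty (since $P_{0,0} = 0 < p$) and finite, because the semiconvergent denominators $Q_{m,n}$ grow without bound, so the maximum $(m_0,n_0)$ with respect to $<'$ is well-defined. For any $(m,n)$ with $w_\lambda(m,n) > w_\lambda(m_0,n_0)$, negating the defining condition of $(m_0,n_0)$ forces both $P_{m,n} \geq p$ and $Q_{m,n} \geq q$, whence $h_{m,n} = (Q_{m,n} - q) + (P_{m,n} - p) + 1 \geq 1 > 0$. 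I do not foresee any serious obstacle: the main point of care is to apply the $n \neq 0$ branch of \eqref{onesideconv} consistently for $h_{m,n+1}$ and the $n = 0$ branch for $h_{m,0}$ and $h_{m-1,0}$, and to verify that the set defining $(m_0,n_0)$ is indeed finite so the maximum exists.
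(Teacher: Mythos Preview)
Your proposal is correct and follows essentially the same route as the paper: substitute the semiconvergent recurrences $P_{m,n+1}=(n+1)p_m+p_{m-1}$, $Q_{m,n+1}=(n+1)q_m+q_{m-1}$ into \eqref{hmn} and regroup to recognise $(n+1)h_{m,0}+h_{m-1,0}+(n+1)(p+q-1)$, and then deduce positivity from the maximality of $(m_0,n_0)$ exactly as the paper does. The only cosmetic difference is that the paper writes $P_{m,0},Q_{m,0}$ where you write $p_m,q_m$; your added remark that the defining set for $(m_0,n_0)$ is finite (you might note that both $P_{m,n}$ and $Q_{m,n}$ grow, not just $Q_{m,n}$, since the defining condition is a disjunction) is a harmless elaboration of a point the paper takes for granted.
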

\begin{proof}
Recall from the definition of $P_{m,n}$ and $Q_{m,n}$ in \eqref{onesideconv} that
\begin{equation*}
Q_{m,n+1} = (n+1)Q_{m,0} + Q_{m-1,0},
\end{equation*}
and
\begin{equation*}
P_{m,n+1} = (n+1)P_{m,0} + P_{m-1,0}.
\end{equation*}
By applying this to the formula \eqref{hmn}, we get
\begin{align*}
h_{m,n+1} 	&= (Q_{m,n+1} - q) + (P_{m,n+1} - p) + 1 \\
			&= ((n+1)Q_{m,0} + Q_{m-1,0} - q) + ((n+1)P_{m,0} + P_{m-1,0} - p) + 1 \\
			&= (n+1)(Q_{m,0} + P_{m,0}) + \left( (Q_{m-1,0} - q) + (P_{m-1,0} - p) + 1 \right).
\end{align*}
Recalling the formula \eqref{hmn} for $h_{m,0}$, we get
\begin{align*}
h_{m,n+1} 	&= (n+1)((Q_{m,0} - q) + (P_{m,0} - p) + 1 + (p + q - 1)) + h_{m-1,0} \\
			&= (n+1)h_{m,0} + h_{m-1,0} + (n+1)(p + q - 1).
\end{align*}
Recall that $(m_0,n_0)$ is the largest pair in $\mathcal{N}_\lambda^<$ such that either $Q_{m_0,n_0} < q$ or $P_{m_0,n_0} < p$.  Thus, if $w_\lambda(m,n) > w_\lambda(m_0,n_0)$, then necessarily $Q_{m,n} \geq q$ and $P_{m,n} \geq p$, which further implies $h_{m,n} > 0$ by \eqref{hmn}.
\end{proof}

We won't attempt to find recursive relations for all iterates of $F_\kappa$ at 0, but we will at least calculate the orbit of 0 at iterates given by the sequence $(h_{m,n})_{(m,n) \in \mathcal{N}_\lambda^<}$.

\begin{lemma}
\label{orbitzero}
Let $(m,n) \in \mathcal{N}_\lambda^<$ such that $w_\lambda(m,n) > w_\lambda(m_0,n_0)$.  Then
\begin{equation*}
F_\kappa^{h_{m,n}}(0) = \Delta_{m,n}.
\end{equation*}
\end{lemma}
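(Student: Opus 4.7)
The plan is to reduce to dynamics on the real line and then exploit a rotation-like congruence modulo $1 + \lambda$. My first step is to invoke Proposition \ref{FisG} so that on $\R$ the map $F_\kappa$ reduces to $F_\kappa(0) = -\eta = q\lambda - p$ together with $x \mapsto x + \lambda$ on $(-\infty, 0)$ and $x \mapsto x - 1$ on $(0, \infty)$ (recall $\rho = 1$). The hypothesis $-\lambda < \eta < 1$ places $-\eta \in (-1, \lambda)$, and a direct check shows $F_\kappa$ maps $(-1, 0)$ into $(\lambda - 1, \lambda)$ and $(0, \lambda)$ into $(-1, \lambda - 1)$, both contained in $(-1, \lambda)$. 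Since every iterate of $0$ is of the form $a\lambda - b$ with $a, b \in \N$, hence nonzero for $k \geq 1$ by irrationality of $\lambda$, we conclude $F_\kappa^k(0) \in (-1, \lambda) \setminus \{0\}$ for all $k \geq 1$.

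The heart of the argument is the congruence
\begin{equation*}
F_\kappa^k(0) \equiv q\lambda - p + (k-1)\lambda \pmod{L}, \qquad L := 1 + \lambda,
\end{equation*}
valid for all $k \geq 1$, which I would establish by a one-line induction on $k$: whichever update is applied, $+\lambda$ or $-1$, the residue modulo $L$ advances by $\lambda$ because $-1 \equiv \lambda \pmod L$. Specialising to $k = h_{m, n}$ and substituting $h_{m, n} - 1 = (Q_{m, n} - q) + (P_{m, n} - p)$, a short rearrangement gives
\begin{equation*}
q\lambda - p + (h_{m, n} - 1)\lambda = (Q_{m, n} + P_{m, n} - p)\lambda - p = \Delta_{m, n} + (P_{m, n} - p) L,
\end{equation*}
so $F_\kappa^{h_{m, n}}(0) \equiv \Delta_{m, n} \pmod L$.

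To upgrade this congruence to equality, it suffices to show $\Delta_{m, n} \in (-1, \lambda)$: the interval has length exactly $L$, so it contains a unique representative of each mod-$L$ class, and we already know $F_\kappa^{h_{m, n}}(0)$ lies in it. This containment is the main technical obstacle. The hypothesis $w_\lambda(m, n) > w_\lambda(m_0, n_0) \geq w_\lambda(0, 0)$ excludes the boundary case $(m, n) = (0, 0)$, where $\Delta_{0, 0} = \lambda$. For $m = 0$ and $1 \leq n < \lambda_1$, the bound $n\lambda \leq (\lambda_1 - 1)\lambda < 1$ immediately yields $\Delta_{0, n} = n\lambda - 1 \in (-1, 0)$. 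For $m \geq 1$, I would combine the standard monotone decrease of $|\Delta_{m',0}|$ in $m'$ (starting from $|\Delta_{0,0}| = \lambda$ and $|\Delta_{1,0}| = 1 - \lambda_1 \lambda < \lambda$, the latter because $(\lambda_1+1)\lambda > 1$) with the opposing-signs identity $|\Delta_{m, n}| = |\Delta_{m-1, 0}| - n|\Delta_{m, 0}|$, valid for $1 \leq n < \lambda_{m+1}$, to conclude $|\Delta_{m, n}| < \lambda$, hence $\Delta_{m, n} \in (-\lambda, \lambda) \subset (-1, \lambda)$. This closes the proof.
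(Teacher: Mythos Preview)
Your proof is correct and takes a genuinely different route from the paper's. The paper argues by contradiction: it introduces counters $a_t,b_t$ recording how many $-1$ and $+\lambda$ steps have been applied, notes $a_t+b_t+1=t$, and then, assuming $(a_{h_{m,n}},b_{h_{m,n}})\neq(P_{m,n}-p,\,Q_{m,n}-q)$, splits into two cases and derives a contradiction using the best-approximation property of the semiconvergents $P_{m,n}/Q_{m,n}$. Your argument instead recognises that on $(-1,\lambda)\setminus\{0\}$ the baseline map is a rotation by $\lambda$ modulo $L=1+\lambda$, computes the residue $F_\kappa^{h_{m,n}}(0)\equiv \Delta_{m,n}\pmod{L}$ directly from $h_{m,n}-1=(Q_{m,n}-q)+(P_{m,n}-p)$, and then pins down equality by showing both sides lie in the length-$L$ interval $(-1,\lambda)$.

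What each approach buys: the paper's proof keeps the Diophantine theme in the foreground---it is the best-approximate inequalities \eqref{errorabove}, \eqref{errorbelow} that do the work---and dovetails with the later Lemmas~\ref{bound1} and~\ref{ineqs}, which reuse the same machinery. Your proof is shorter and more conceptual; it never invokes best-approximation, and in fact establishes the conclusion under the slightly weaker hypothesis that $h_{m,n}\geq 1$ and $(m,n)\neq(0,0)$, since your verification of $\Delta_{m,n}\in(-1,\lambda)$ holds for every $(m,n)\in\mathcal{N}_\lambda^<$ other than $(0,0)$. The only place the stated hypothesis $w_\lambda(m,n)>w_\lambda(m_0,n_0)$ enters your argument is to guarantee $h_{m,n}\geq 1$ (so that the mod-$L$ recursion, which begins at $k=1$, applies) and to exclude $(0,0)$.
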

\begin{proof}
Suppose, for a contradiction, that $F_\kappa^{h_{m,n}}(0) \neq \Delta_{m,n}$.  Let $(a_t)_{t \in \N}$ and $(b_t)_{n \in \N}$ denote the sequences defined by
\begin{equation}
\label{atbt}
F_\kappa^t(0) = -\eta + b_t\lambda - a_t.
\end{equation}
Note that since $\eta = p - q\lambda$, we have
\begin{equation*}
F_\kappa^t(0) = q\lambda - p + b_t\lambda - a_t = (b_t + q)\lambda - (a_t + p),
\end{equation*}
which is never equal to 0 since $\lambda$ is irrational and $b_t+q$ and $a_t+p$ are both positive.  Observe that $(a_t)_t$ and $(b_t)_t$ are both non-decreasing and obey the following rule:
\begin{equation*}
(a_{t+1},b_{t+1}) = 	\begin{cases}
				(a_t, b_t + 1), &\text{ if } F_\kappa^t(0) < 0, \\
				(a_t + 1, b_t),	&\text{ if } F_\kappa^t(0) > 0.
				\end{cases}
\end{equation*}
Given that $a_0 = a_1 = b_0 = b_1 = 0$, we can deduce that the sequences $(a_t)_t$ and $(b_t)_t$ achieve every non-negative integer value, that is, for any $N \in \N$, there is some $t \in N$ such that $a_t = N$, and similarly there is some $t' \in \N$ such that $b_{t'} = N$.  Moreover, a simple inductive argument shows that for all integers $t \geq 1$,
\begin{equation*}
a_t + b_t + 1 = t.
\end{equation*}
Next, observe that $F_\kappa^{h_{m,n}}(0) \neq \Delta_{m,n}$ is equivalent to the statement that $a_{h_{m,n}} \neq P_{m,n}$ and $b_{h_{m,n}} \neq Q_{m,n}$.  However, notice that 
\begin{equation*}
(Q_{m,n} - q) + (P_{m,n} - p) + 1 = h_{m,n} = a_{h_{m,n}} + b_{h_{m,n}} + 1.
\end{equation*}
This implies one of two cases.
\begin{enumerate}[label=(\arabic*)]
\item $a_{h_{m,n}} < P_{m,n} - p$ and $b_{h_{m,n}} > Q_{m,n} - q$; or
\item $a_{h_{m,n}} > P_{m,n} - p$ and $b_{h_{m,n}} < Q_{m,n} - q$.
\end{enumerate}
Suppose case (1) holds.  Since $(b_t)_t$ is non-decreasing and takes every non-negative integer value, we know that there is some non-negative integer $t^* < h_{m,n}$ such that $b_{t^*} = Q_{m,n} - q$.  Thus,
\begin{equation*}
F_\kappa^{t^*}(0) = -\eta + (Q_{m,n} - q)\lambda - a_{t^*}.
\end{equation*}
Note that $a_{t^*} < a_t < P_{m,n} - p$.  Now, suppose $\Delta_{m,n} > 0$.  Then for any $0 \leq j \leq P_{m,n} - p - a_{t^*}$, we have
\begin{align*}
F_\kappa^{t^*}(0) - j 	&= -\eta + (Q_{m,n} - q)\lambda - (a_{t^*} + j) \\
					&> (Q_{m,n} - q)\lambda - (P_{m,n} - p) \\
					&= \Delta_{m,n} \\
					&> 0.
\end{align*}
Thus, $F_\kappa^{t^*+j}(0) = F_\kappa^{t^*}(0) - j$ for $0 \leq j \leq P_{m,n} - p - a_{t^*}$.  In particular,
\begin{equation*}
F_\kappa^{t^*+P_{m,n}-p-a_{t^*}}(0) = - \eta + (Q_{m,n} - q)\lambda - (P_{m,n} - p).
\end{equation*}
But then
\begin{align*}
t^* + P_{m,n} - p - a_{t^*} 	&= (Q_{m,n} - q) + (P_{m,n} - p) + 1 \\
						&= h_{m,n}.
\end{align*}
And this implies that
\begin{align*}
F_\kappa^{h_{m,n}}(0)  	&= F_\kappa^{t^*+P_{m,n}-p-a_{t^*}}(0) \\
				&= -\eta + (Q_{m,n} - q)\lambda - (P_{m,n} - p) \\
				&= \Delta_{m,n}.
\end{align*}
But this contradicts our assumption that $F_\kappa^{h_{m,n}}(0) \neq \Delta_{m,n}$.

Now suppose that $\Delta_{m,n} < 0$.  Let $0 \leq j < P_{m,n} - p - a_{t^*}$.  Then either $F_\kappa^{t^*}(0) - j > 0$ or $\Delta_{m,n} < F_\kappa^{t^*}(0) - j < 0$.  But $\Delta_{m,n} < F_\kappa^{t^*}(0) - j < 0$ implies
\begin{equation*}
|Q_{m,n}\lambda - (a_{t^*} + j + p)| < |\Delta_{m,n}| = |Q_{m,n}\lambda  - P_{m,n}|,
\end{equation*}
and $a_{t^*} + j + p < P_{m,n}$.  This contradicts the 'best approximate' property of the semiconvergent $P_{m,n}/Q_{m,n}$.  Thus $F_\kappa^{t^*}(0) - j > 0$ for all $0 \leq j < P_{m,n} - p - a_{t^*}$.  Thus, by a similar argument to before, we reach the contradiction that $F_\kappa^{h_{m,n}}(0) = \Delta_{m,n}$.

In case (2), $a_{h_{m,n}} > P_{m,n} - p$ and $b_{h_{m,n}} < Q_{m,n} - q$.  We can reach a similar contradiction as above, by using a similar argument where the roles of $(a_t)_t$ and $(b_t)_t$ are interchanged.

This exhausts all cases, so our assumption that $F_\kappa^{h_{m,n}}(0) \neq \Delta_{m,n}$ must be false.
\end{proof}

\begin{lemma}
\label{bound1}
Let $m \in \N$ such that $w_\lambda(m,0) > w_\lambda(m_0,n_0)$.  Then, for all $1 \leq t < h_{m+1,0}$,
\begin{equation*}
|F_\kappa^t(0)| \geq |\Delta_{m,0}|.
\end{equation*}
\end{lemma}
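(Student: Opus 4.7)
The plan is to combine Lemma \ref{orbitzero} (applied at index $m+1$) with the best approximate property of the convergent $p_m/q_m$. Recall from the proof of Lemma \ref{orbitzero} the parameterization $F_\kappa^t(0) = (b_t+q)\lambda - (a_t+p)$, where $(a_t), (b_t)$ are non-decreasing non-negative integer sequences obeying $a_t + b_t + 1 = t$. Setting $A_t = a_t + p$ and $B_t = b_t + q$, we have $F_\kappa^t(0) = B_t\lambda - A_t$ with $A_t \geq p \geq 1$ and $B_t \geq q \geq 1$, so the goal reduces to bounding $|B_t\lambda - A_t|$ from below by $|q_m\lambda - p_m| = |\Delta_{m,0}|$.

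Lemma \ref{orbitzero} applied at $(m+1,0)$ gives $F_\kappa^{h_{m+1,0}}(0) = \Delta_{m+1,0}$, which forces $A_{h_{m+1,0}} = p_{m+1}$ and $B_{h_{m+1,0}} = q_{m+1}$. By monotonicity of $(a_t), (b_t)$, for every $1 \leq t < h_{m+1,0}$ one has $A_t \leq p_{m+1}$ and $B_t \leq q_{m+1}$, and since $a_t + b_t + 1 = t < h_{m+1,0} = (p_{m+1} - p) + (q_{m+1} - q) + 1$, at least one inequality is strict. I would then split into two cases. If $B_t < q_{m+1}$, the best approximate property of the convergent $p_m/q_m$ recalled earlier in Section 3 applies verbatim, yielding $|B_t\lambda - A_t| \geq |\Delta_{m,0}|$.

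The remaining case, $B_t = q_{m+1}$ (forcing $A_t \leq p_{m+1} - 1$), is the main obstacle, since the standard best approximate statement is only guaranteed for denominators strictly below $q_{m+1}$. Here I would compute directly $F_\kappa^t(0) = q_{m+1}\lambda - A_t \geq q_{m+1}\lambda - (p_{m+1} - 1) = 1 + \Delta_{m+1,0}$, which is positive since $|\Delta_{m+1,0}| < 1$. The needed inequality $1 + \Delta_{m+1,0} \geq |\Delta_{m,0}|$ is trivial when $\Delta_{m+1,0} \geq 0$ (as $|\Delta_{m,0}| < 1$), and in the remaining sub-case amounts to $|\Delta_{m,0}| + |\Delta_{m+1,0}| \leq 1$. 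This last estimate I would obtain by iterating the recurrence $|\Delta_{k-2,0}| = \lambda_k|\Delta_{k-1,0}| + |\Delta_{k,0}|$ (noted just after \eqref{Deltan}) together with $\lambda_k \geq 1$, which yields $|\Delta_{m,0}| + |\Delta_{m+1,0}| \leq |\Delta_{m-1,0}|$, chained down to $|\Delta_{-1,0}| = 1$. Altogether this handles the boundary case and completes the bound.
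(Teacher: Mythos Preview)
Your proof is correct and follows the same strategy as the paper's: represent $F_\kappa^t(0) = (b_t+q)\lambda - (a_t+p)$, use monotonicity of $(a_t),(b_t)$ together with $a_t+b_t+1=t<h_{m+1,0}$ to bound $a_t+p\le p_{m+1}$ and $b_t+q\le q_{m+1}$ (not both equal), and then invoke the best-approximation property of the convergent $p_m/q_m$. You are in fact more careful than the paper's own argument, which simply asserts the conclusion ``in either case'' without treating the boundary situation $b_t+q=q_{m+1}$ that lies outside the version of the best-approximation property stated earlier; your direct estimate via $|\Delta_{m,0}|+|\Delta_{m+1,0}|\le |\Delta_{m-1,0}|\le\cdots\le|\Delta_{-1,0}|=1$ cleanly closes that gap.
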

\begin{proof}
recall that $F_\kappa^t(0) = -\eta + b_t\lambda - a_t$, for some $a_t, b_t \in \N$.  Since $(a_t)_t$ and $(b_t)_t$ are non-decreasing and $t < h_{m+1,0}$, we have that $b_t \leq q_{m+1} - q$ and $a_t \leq p_{m+1} - p$, not both equal.  Hence either $b_t + q < q_{m+1}$ or $a_t + p < p_{m+1}$ and $b_t + q \leq q_{m+1}$.  In either case, by the best approximate property of convergents
\begin{equation*}
|F_\kappa^t(0)| = |(b_t + q)\lambda - (a_t + p)| \geq |q_m\lambda - p_m| = |\Delta_{m,0}|.
\end{equation*}
\end{proof}

\begin{lemma}
\label{ineqs}
Let $(m,n) \in \mathcal{N}_\lambda^<$.  Then for all $1 \leq t < h_{m,n+1}$,
\begin{align*}
F_\kappa^t(0) &\geq \Delta_{m,0} \text{ or } F_\kappa^t(0) \leq n\Delta_{m,0} + \Delta_{m-1,0} \text{ if $m$ is even,} \\[1mm]
F_\kappa^t(0) &\leq \Delta_{m,0} \text{ or } F_\kappa^t(0) \geq n\Delta_{m,0} + \Delta_{m-1,0} \text{ if $m$ is odd.}
\end{align*}
\end{lemma}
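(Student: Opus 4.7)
The plan is to proceed by contradiction and exploit the best-approximate properties \eqref{errorabove} and \eqref{errorbelow} of convergents and semiconvergents. I treat the case $m$ even in detail; the $m$ odd case is its mirror image.

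Set up using the sequences $(a_t),(b_t)$ from the proof of Lemma \ref{orbitzero}: write $F_\kappa^t(0) = s\lambda - r$ with $s = b_t + q$, $r = a_t + p$, so $s + r = t + p + q - 1$. Applying Lemma \ref{orbitzero} at $t = h_{m,n+1}$ gives $F_\kappa^{h_{m,n+1}}(0) = Q_{m,n+1}\lambda - P_{m,n+1}$, and irrationality of $\lambda$ makes this representation unique, so $b_{h_{m,n+1}} + q = Q_{m,n+1}$ and $a_{h_{m,n+1}} + p = P_{m,n+1}$. Since $(a_t),(b_t)$ are non-decreasing and $t < h_{m,n+1}$, we obtain $s \le Q_{m,n+1}$, $r \le P_{m,n+1}$, and the strict inequality $s + r < Q_{m,n+1} + P_{m,n+1}$. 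Now assume for contradiction that $F_\kappa^t(0) \in (\Delta_{m,n}, \Delta_{m,0})$ for some $1 \le t < h_{m,n+1}$.

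If $0 < s\lambda - r < \Delta_{m,0}$, then $r/s < \lambda$, $(r,s) \ne (p_m, q_m)$, and $|s\lambda - r| < |\Delta_{m,0}|$. As $m$ is even, $p_m/q_m$ is a best approximate from below, and its successor in the from-below subsequence is the semiconvergent $(m+1, 1)$, of denominator $q_{m+1} + q_m$. Inequality \eqref{errorbelow} therefore forces $s \ge q_{m+1} + q_m = (\lambda_{m+1} + 1)q_m + q_{m-1}$, which strictly exceeds $Q_{m,n+1} = (n+1)q_m + q_{m-1}$ since $n < \lambda_{m+1}$; this contradicts $s \le Q_{m,n+1}$. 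If instead $\Delta_{m,n} < s\lambda - r < 0$ (reading $\Delta_{m,n}$ as $\Delta_{m-1,0}$ when $n = 0$), then $r/s > \lambda$ and $|s\lambda - r|$ is strictly less than $|\Delta_{m,n}|$ (resp.\ $|\Delta_{m-1,0}|$). The corresponding best approximate from above is $(P_{m,n}, Q_{m,n})$ when $n \ge 1$ and $(p_{m-1}, q_{m-1})$ when $n = 0$; in both cases its successor in the from-above subsequence is the semiconvergent $(m, n+1)$ (equivalently $(m+1, 0)$ when $n + 1 = \lambda_{m+1}$), of denominator $Q_{m,n+1}$. Hence \eqref{errorabove} gives $s \ge Q_{m,n+1}$; combined with $s \le Q_{m,n+1}$ we get $s = Q_{m,n+1}$ and consequently $r < P_{m,n+1}$. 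But $r > s\lambda = P_{m,n+1} + \Delta_{m,n+1} = P_{m,n+1} - |\Delta_{m,n+1}|$, and since $|\Delta_{m,n+1}| < 1$ and $r$ is an integer, $r \ge P_{m,n+1}$ — contradiction.

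For $m$ odd the two cases are the mirror images: $p_m/q_m$ is now from above and drives the analogue of Case 1 via the same denominator comparison $q_{m+1} + q_m > Q_{m,n+1}$, while $(P_{m,n}, Q_{m,n})$ (resp.\ $(p_{m-1}, q_{m-1})$) is from below and drives the analogue of Case 2. The only twist is the final integrality step, where the bound $r < s\lambda$ now yields only $r \le P_{m,n+1}$ because $\Delta_{m,n+1} > 0$; one instead uses $r > s\lambda - \Delta_{m,n} = P_{m,n+1} + \Delta_{m,0}$ together with $\Delta_{m,0} < 0$ and $|\Delta_{m,0}| < 1$ to force $r \ge P_{m,n+1}$, contradicting $r < P_{m,n+1}$. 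The main obstacle is exactly this case-by-case bookkeeping — identifying the correct best approximate and its successor on each side of $\lambda$ for each parity of $m$ and for $n = 0$ versus $n \ge 1$ — and verifying the elementary denominator comparison $q_{m+1} + q_m > Q_{m,n+1}$, which relies only on the defining constraint $n < \lambda_{m+1}$ of $\mathcal{N}_\lambda^<$.
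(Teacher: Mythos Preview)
Your proposal is correct and follows essentially the same approach as the paper: both split on the sign of $F_\kappa^t(0)$, bound $s=b_t+q$ and $r=a_t+p$ by $Q_{m,n+1}$ and $P_{m,n+1}$, and then appeal to the one-sided best-approximation properties \eqref{errorabove}--\eqref{errorbelow} of the semiconvergents. The only cosmetic differences are that the paper cites Lemma~\ref{bound1} for the case $F_\kappa^t(0)>0$ (when $m$ is even) whereas you re-derive that bound inline via the successor semiconvergent $(m+1,1)$ and the comparison $Q_{m,n+1}\le q_{m+1}<q_{m+1}+q_m$, and you handle the boundary case $s=Q_{m,n+1}$ more explicitly with an integrality argument than the paper does.
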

\begin{proof}
From \eqref{errorabove}, recall that if $m$ is even and $n > 0$, then $P_{m,n}/Q_{m,n} > \lambda$ is a best approximate from above, which implies
\begin{equation}
\label{bounderror}
b\lambda - a \leq Q_{m,n}\lambda - P_{m,n} < 0,
\end{equation}
for all $a,b \in \Z$ with $0 < b < Q_{m,n+1}$ such that $P_{m,n}/Q_{m,n} \neq a/b > \lambda$.

Let $(a_t)_{t \in \N}$ and $(b_t)_{t \in \N}$ be the sequences described by \eqref{atbt}.  Suppose that $1 \leq t \leq h_{m,n}$ with $F_\kappa^t(0) < 0$.  Then 
\begin{equation*}
b_t \leq Q_{m,n+1} - q \text{ and } a_t \leq P_{m,n+1} - p,
\end{equation*}
since $(a_t)_t$ and $(b_t)_t$ are non-decreasing.  Thus, 
\begin{equation}
\label{bound2}
b_t + q \leq Q_{m,n+1} \text{ and } a_t + p \leq P_{m,n+1},
\end{equation}
not both equal.  Therefore, from \eqref{atbt} we know that
\begin{equation*}
F_\kappa^t(0) = (b_t + q)\lambda - (a_t + p),
\end{equation*}
and by \eqref{bounderror} with \eqref{bound2}, we have
\begin{equation*}
F_\kappa^t(0) \leq 	\begin{cases}
			Q_{m,n}\lambda - P_{m,n}, 	&\text{ if } n \neq 0, \\
			q_{m-1}\lambda - p_{m-1}, 		&\text{ if } n = 0.
			\end{cases}
\end{equation*}
Recalling the definition of $\Delta_{m,n}$ as in \eqref{semiconverrors}, we get
\begin{equation*}
F_\kappa^t(0) \leq 	\begin{cases}
			\Delta_{m,n}, 	&\text{ if } n \neq 0, \\
			\Delta_{m-1,0}, 	&\text{ if } n = 0.
			\end{cases}
\end{equation*}
Finally, by using \eqref{Deltan}, we have
\begin{equation*}
F_\kappa^t(0) \leq n\Delta_{m,0} + \Delta_{m-1,0}.
\end{equation*}
If $F_\kappa^t(0) > 0$, then by Lemma \ref{bound1}, we have 
\begin{equation*}
F_\kappa^t(0) \geq \Delta_{m,0}.
\end{equation*}
From \eqref{errorbelow}, recall that if $m$ is odd and $n > 0$, then $P_{m,n}/Q_{m,n} < \lambda$ is a best approximate from below, that is
\begin{equation*}
b\lambda - a \geq Q_{m,n}\lambda - P_{m,n} > 0,
\end{equation*}
for all $a,b \in \Z$ with $0 < b < Q_{m,n+1}$ such that $P_{m,n}/Q_{m,n} \neq a/b < \lambda$.  Thus, in the case that $m$ is odd and $F_\kappa^t(0) > 0$, then 
\begin{equation*}
b_t + q \leq Q_{m,n+1} \text{ and } a_t + p \leq P_{m,n+1},
\end{equation*}
not both equal.  Thus, similarly to the above case where $m$ is even, we have
\begin{align*}
F_\kappa^t(0) 	&= (b_t + q)\lambda - (a_t + p) \\
		&\geq 	\begin{cases}
				Q_{m,n}\lambda - P_{m,n} 	&\text{ if } n \neq 0 \\
				q_{m-1}\lambda - p_{m-1} 		&\text{ if } n = 0
				\end{cases} \\
		&= n\Delta_{m,0} + \Delta_{m-1,0}.
\end{align*}
On the other hand, if $F_\kappa^t(0) < 0$, then by Lemma \ref{bound1}, 
\begin{equation*}
F_\kappa^t(0) \leq \Delta_{m,0}.
\end{equation*}
\end{proof}

In order to prove the next theorem, we will distinguish between the following two cases and we will prove them separately. We will first prove that
\begin{equation*}
\text{if } z \in E^{-1}(S_{m,n}), \text{ then } h(z) = h_{m,n+1},
\end{equation*}
and then we will prove that
\begin{equation*}
\text{if } h(z) = h_{m,n+1}, \text{ then } F_\kappa^{h(z)}(z) = E(z) + \Delta_{m,n+1}.
\end{equation*}

\begin{theorem}
\label{conje}
Let $\alpha \in \mathbb{B}^{d+2}$, $\tau: \{1,...,d\} \rightarrow \{1,...,d\}$ be a bijection, $\lambda \in [0,1) \setminus \mathbb{Q}$, $-\lambda < \eta = p - q\lambda < 1$ for some $p,q \in \N \setminus \{0\}$ and set $\kappa = (\alpha, \tau, \lambda, \eta, 1)$.  For all $(m,n) \in \mathcal{N}_\lambda^<$ with $w_\lambda(m,n) > w_\lambda(m_0,n_0)$, $h(E^{-1}(S_{m,n}))$ exists and is equal to $h_{m,n+1}$.  Moreover, let $z \in E^{-1}(S_{m,n})$.  Then
\begin{equation}
\label{NewRz}
R_\kappa(z) = E(z) + \Delta_{m,n+1}(\lambda).
\end{equation}
\end{theorem}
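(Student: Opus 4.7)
My plan is to split the argument into the two implications flagged in the paragraph preceding the theorem. Fix $z \in E^{-1}(S_{m,n})$ and write $z' = E(z) \in S_{m,n} \subset C_c$. The implication ``$h(z) = h_{m,n+1}$ implies $R_\kappa(z) = E(z) + \Delta_{m,n+1}$'' is immediate from Lemma \ref{Split} at $j = h(z)$ combined with Lemma \ref{orbitzero}: together they give $F_\kappa^{h(z)}(z) = E(z) + F_\kappa^{h_{m,n+1}}(0) = E(z) + \Delta_{m,n+1}$. Lemma \ref{orbitzero} applies because $w_\lambda(m,n+1) > w_\lambda(m_0,n_0)$; in the edge case $n+1 = \lambda_{m+1}$ I would invoke the wrapping identities $\Delta_{m,\lambda_{m+1}} = \Delta_{m+1,0}$ and $h_{m,\lambda_{m+1}} = h_{m+1,0}$ that follow from \eqref{wrapping} and the recurrence for $P_{m,n}, Q_{m,n}$.

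The substance lies in the reverse implication. My plan is to prove by induction on $t$ that $F_\kappa^t(z) \notin C_c$ for all $1 \leq t < h_{m,n+1}$, which yields $h(z) \geq h_{m,n+1}$. The inductive hypothesis $F_\kappa^s(z) \notin C_c$ for $1 \leq s < t$ gives $h(z) \geq t$, so Lemma \ref{Split} supplies $F_\kappa^t(z) = z' + F_\kappa^t(0)$. Since $F_\kappa^t(0) \in \mathbb{R}$, this translate shares the imaginary part of $z'$, and, writing $\varepsilon_1 = \Im(z')\cot\alpha_{d+1}$ and $\varepsilon_2 = \Im(z')\cot\alpha_0$ as in the proof of Lemma \ref{Split}, its cone membership is determined entirely by where its real part sits relative to $-\varepsilon_1$ and $\varepsilon_2$. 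The inductive step then reduces to a short case analysis driven by Lemma \ref{ineqs}: for $m$ even that lemma forces $F_\kappa^t(0) \geq \Delta_{m,0}$ or $F_\kappa^t(0) \leq n\Delta_{m,0} + \Delta_{m-1,0}$; the first case combined with the factor $z' \in C_0 - \Delta_{m,0}$ in \eqref{defSmn} gives $\Re(z' + F_\kappa^t(0)) > \varepsilon_2$, so $F_\kappa^t(z) \in C_0$, and the second case combined with $z' \in C_{d+1} - (n\Delta_{m,0}+\Delta_{m-1,0})$ gives $F_\kappa^t(z) \in C_{d+1}$. For $m$ odd the two alternatives of Lemma \ref{ineqs} pair symmetrically with the swapped containments in the odd branch of \eqref{defSmn}, with the roles of $C_0$ and $C_{d+1}$ interchanged.

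Once the induction is complete we have $h(z) \geq h_{m,n+1}$, and one further application of Lemma \ref{Split} at $j = h_{m,n+1}$ together with Lemma \ref{orbitzero} gives $F_\kappa^{h_{m,n+1}}(z) = z' + \Delta_{m,n+1}$; the remaining defining containment $z' \in C_c - \Delta_{m,n+1}$ puts this point back in $C_c$, forcing $h(z) \leq h_{m,n+1}$ and hence equality, and the claimed formula $R_\kappa(z) = E(z) + \Delta_{m,n+1}(\lambda)$ follows. I expect the main delicate point to be the sign bookkeeping in the inductive step: one must use \eqref{oddpositive} and the recurrence \eqref{Deltan} to confirm that in each parity of $m$ the two alternatives from Lemma \ref{ineqs} pair correctly with the two ``exit'' factors of $S_{m,n}$, so that each alternative really does eject $F_\kappa^t(z)$ from $C_c$ on the side predicted by the definition of $S_{m,n}$.
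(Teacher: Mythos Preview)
Your proposal is correct and follows essentially the same route as the paper: both arguments rest on Lemma \ref{Split}, Lemma \ref{ineqs}, and Lemma \ref{orbitzero}, pairing the two alternatives of Lemma \ref{ineqs} with the two ``exit'' factors in the definition \eqref{defSmn} of $S_{m,n}$ to force $F_\kappa^t(z)$ out of $C_c$ for $t<h_{m,n+1}$, and then using $z'\in C_c-\Delta_{m,n+1}$ together with Lemma \ref{orbitzero} to land back in $C_c$ at time $h_{m,n+1}$.

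The only presentational difference is that the paper sidesteps your induction: rather than stepping through every $1\le t<h_{m,n+1}$, it assumes $h(z)<h_{m,n+1}$ and applies Lemma \ref{Split} once at $j=h(z)$ (which is automatically $\le h(z)$), then runs the Lemma \ref{ineqs} dichotomy at that single time to obtain $F_\kappa^{h(z)}(z)\in C_0\cup C_{d+1}$, contradicting the definition of $h(z)$. The paper also phrases the ejection via set containments such as $S_{m,n}+F_\kappa^{h(z)}(0)\subset C_0+(F_\kappa^{h(z)}(0)-\Delta_{m,0})\subset C_0$ rather than your $\varepsilon_1,\varepsilon_2$ real-part bookkeeping; this is marginally cleaner but logically identical. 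Your inductive framing has the virtue of making explicit why Lemma \ref{Split} is available at each step, whereas the paper's compressed opening line ``$E(z)+F_\kappa^{h_{m,n+1}}(0)\in C_c$, so $h(z)\le h_{m,n+1}$'' implicitly presupposes $h(z)\ge h_{m,n+1}$ for Lemma \ref{Split} to apply, which is only justified after the contradiction argument is complete.
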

\begin{proof}
Let $z \in C_c$.  Assume $z \in E^{-1}(S_{m,n})$.  Observe that $E(z) + F_\kappa^{h_{m,n+1}}(0) \in C_c$, so $h(z) \leq h_{m,n+1}$.  By Lemma \ref{Split}, we know that 
\begin{equation*}
F_\kappa^{h(z)}(z) = E(z) + F_\kappa^{h(z)}(0).
\end{equation*}
Suppose, for a contradiction, that $h(z) < h_{m,n+1}$.  We will prove the contradiction for even and odd $m$ separately, starting with the case that $m$ is even.  By Lemma \ref{ineqs}, we know that either $F_\kappa^{h(z)}(0) \geq \Delta_{m,0}$ or $F_\kappa^{h(z)}(0) \leq n\Delta_{m,0} + \Delta_{m-1,0}$.  Since $m$ is even, recall that
\begin{equation*}
S_{m,n} = (C_0 - \Delta_{m,0}) \cap (C_c - \Delta_{m,n+1}) \cap C_c \cap (C_{d+1} - (n\Delta_{m,0} + \Delta_{m-1,0})).
\end{equation*}

Observe that
\begin{equation*}
\begin{aligned}
F_\kappa^{h(z)}(z) 	&\in S_{m,n} + F_\kappa^{h(z)}(0) \\
			&\subset (C_0 + (F_\kappa^{h(z)}(0) - \Delta_{m,0})) \cap (C_{d+1} + (F_\kappa^{h(z)}(0) - (n\Delta_{m,0} + \Delta_{m-1,0}))).
\end{aligned}
\end{equation*}
Therefore, if $F_\kappa^{h(z)}(0) \geq \Delta_{m,0}$, then 
\begin{equation*}
F_\kappa^{h(z)}(z) \in C_0 + (F_\kappa^{h(z)}(0) - \Delta_{m,0}) \subset C_0.
\end{equation*}
But by the definition of $h(z)$ as in \eqref{hz}, we have
\begin{equation*}
F_\kappa^{h(z)}(z) = R_\kappa(z) \in C_c,
\end{equation*}
which reveals a contradiction.  Similarly, if $F_\kappa^{h(z)}(0) \leq n\Delta_{m,0} + \Delta_{m-1,0}$, then 
\begin{equation*}
F_\kappa^{h(z)}(z) \in C_{d+1} + (F_\kappa^{h(z)}(0) -(n\Delta_{m,0} + \Delta_{m-1,0})) \subset C_{d+1},
\end{equation*}
which also contradicts $F_\kappa^{h(z)}(z) \in C_c$. \vspace{5mm}

Now suppose $m$ is odd.  Then
\begin{equation*}
S_{m,n} = (C_0 - (n\Delta_{m,0} + \Delta_{m-1,0})) \cap C_c \cap (C_c - \Delta_{m,n+1})) \cap (C_{d+1} - \Delta_{m,0}).
\end{equation*}
By Lemma \ref{ineqs}, we know that either 
\begin{equation*}
F_\kappa^{h(z)}(0) \geq n\Delta_{m,0} + \Delta_{m-1,0} \text{ or } F_\kappa^{h(z)}(0) \leq \Delta_{m,n}.
\end{equation*}
Clearly, either of these cases give similar contradictions as before.  Therefore our assumption that $h(z) < h_{m,n+1}$ must be false, so in fact $h(z) = h_{m,n+1}$. \vspace{5mm}

Now, suppose $h(z) = h_{m,n+1}$.  By Lemma \ref{Split},
\begin{equation*}
F_\kappa^{h(z)}(z) = E(z) + F_\kappa^{h(z)}(0),
\end{equation*}
and by Lemma \ref{orbitzero} we know that 
\begin{equation*}
F_\kappa^{h_{m,n+1}}(0) = \Delta_{m,n+1}.
\end{equation*}
Combining these two with our assumption gives us that if 
$h(z) = h_{m,n+1},$
then
$F_\kappa^{h(z)}(z) = E(z) + \Delta_{m,n+1}.$
\end{proof}

With this Theorem, as well as Theorem \ref{Scalingprop}, we can prove the existence of a renormalization scheme around the point 0.  First, we will find the definition of the first return map $R_\kappa$ on the rest of $C_c$.

\begin{lemma}
\label{UnionFillsPc}
Let $S_{m,n}$ be as in \eqref{defSmn} and $(m_0,n_0) \in \mathcal{N}_\lambda^<$ as in \eqref{m0n0}.  Define the set $U(\kappa)$
\begin{equation*}
U(\kappa) = \{0\} \cup \bigcup_{\substack{(m,n) \in \mathcal{N}_\lambda^< \\ w_\lambda(m,n) \geq w_\lambda(m_0,n_0)}} S_{m,n}.
\end{equation*}
Then
\begin{equation*}
U(\kappa) = 	\begin{cases}
		(C_0 - \Delta_{m_0,0}) \cap C_c \cap (C_{d+1} - (n_0\Delta_{m_0,0} + \Delta_{m_0-1,0})), 	&\text{if } m_0 \text{ is even,} \\[1.5mm]
		(C_0 - (n_0 \Delta_{m_0,0} + \Delta_{m_0-1,0})) \cap C_c \cap (C_{d+1} - \Delta_{m_0,0}), 	&\text{if } m_0 \text{ is odd,}
	\end{cases}
\end{equation*}
and $U(\kappa)$ is convex.
\end{lemma}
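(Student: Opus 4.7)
The plan is to prove the lemma in two stages: first convexity, then the set equality, with the latter split into the two inclusions.

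Convexity of the claimed formula for $U(\kappa)$ is immediate in both parity cases: $U(\kappa)$ is presented as a threefold intersection of translates of the cones $C_0$, $C_c$, and $C_{d+1}$, each of which is a convex subset of $\ol{\mathbb{H}}$. The intersection of convex sets is convex, and adjoining $\{0\}$ preserves convexity since $0$ lies in the closure of this intersection (a direct check using the signs of $\Delta_{m_0,0}$ and $n_0\Delta_{m_0,0}+\Delta_{m_0-1,0}$ dictated by \eqref{oddpositive}).

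For the inclusion $\bigcup S_{m,n}\cup\{0\}\subseteq U(\kappa)$, I would compare factor by factor. By \eqref{defSmn}, each $S_{m,n}$ is a fourfold intersection whose $C_c$ factor coincides with the $C_c$ factor of $U(\kappa)$, and whose other three factors are translates of $C_0$, $C_c$, $C_{d+1}$. It suffices to show that each such translate lies inside the corresponding factor of $U(\kappa)$; this reduces, via the elementary fact $(C_j - x)\subseteq(C_j - y)$ iff $y-x$ sits on the correct side of the vertex of $C_j$, to inequalities between semiconvergent errors such as $|\Delta_{m,0}(\lambda)|\leq|\Delta_{m_0,0}(\lambda)|$. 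These follow from the monotone decrease of $|\Delta_{m,0}(\lambda)|$ in $m$ together with the sign pattern \eqref{oddpositive}, once one has used the hypothesis $w_\lambda(m,n)\geq w_\lambda(m_0,n_0)$ to control the relevant indices.

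For the reverse inclusion, I would proceed by induction along the well-ordering $<'$ of $\mathcal{N}_\lambda^<$, starting from $(m_0,n_0)$ and appending one $S_{m,n}$ at a time. The key geometric claim is that consecutive parallelograms in this ordering meet exactly along a shared edge, so that at each stage the partial union remains a parallelogram, with its vertices sliding progressively toward $0$. The two identities that make this work are $\Delta_{m,n+1}(\lambda)-\Delta_{m,n}(\lambda)=\Delta_{m,0}(\lambda)$, which governs the transition from $S_{m,n}$ to $S_{m,n+1}$ within a fixed row, and the wrapping identity $\Delta_{m,\lambda_{m+1}}(\lambda)=\Delta_{m+1,0}(\lambda)$, which governs the row-change from $(m,\lambda_{m+1}-1)$ to $(m+1,0)$. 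Passing to the full union and using $\Delta_{m,n}(\lambda)\to 0$ as $w_\lambda(m,n)\to\infty$ identifies $0$ as the unique limit point and shows that the nested parallelograms exhaust $U(\kappa)\setminus\{0\}$.

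The main obstacle is the row-change step, where the parity of $m$ flips and the parallelogram reverses orientation: one must verify carefully that the edge on which $S_{m,\lambda_{m+1}-1}$ ends is precisely the edge on which $S_{m+1,0}$ begins, rather than a slightly displaced parallel segment. Theorem \ref{Scalingprop} provides a convenient tool here, since it realises the tail of the family $\{S_{m+j,n}\}$ as a scaled copy of the family $\{S_{j,n}\}$ for $g^m(\lambda)$, reducing the verification of the infinite cascade to checking a single base configuration. I would use this self-similarity together with the explicit vertex formulas implicit in \eqref{defSmn} to close the induction.
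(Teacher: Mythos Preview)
Your treatment of convexity and of the forward inclusion $\bigcup S_{m,n}\cup\{0\}\subseteq U(\kappa)$ is essentially what the paper does: each $S_{m,n}$ is checked factor by factor against the three translated cones defining $U(\kappa)$, using the monotonicity and sign pattern of the errors $\Delta_{m,n}$, and convexity is just ``intersection of convex sets''.

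For the reverse inclusion your route diverges from the paper's. You propose an induction along $<'$, gluing one parallelogram at a time and invoking Theorem~\ref{Scalingprop} to collapse the infinite cascade to a base case. The paper instead argues directly: given $z\neq 0$ in the right-hand side, it picks the \emph{maximal} $(m,n)$ with $m$ odd such that $z\in C_0-(n\Delta_{m,0}+\Delta_{m-1,0})$ and the maximal $(m',n')$ with $m'$ even such that $z\in C_{d+1}-(n'\Delta_{m',0}+\Delta_{m'-1,0})$; comparing $m$ with $m'$ and using maximality forces $z$ into the corresponding $S_{m,n}$ or $S_{m',n'}$. This pointwise ``locate the correct tile'' argument avoids any induction, any edge-matching, and any appeal to Theorem~\ref{Scalingprop}.

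Your approach is workable in spirit but has two soft spots. First, the assertion that ``at each stage the partial union remains a parallelogram'' is not true as stated: what is a parallelogram is the \emph{tail} $U_{k,l}(\kappa)$, not the finite union $U(\kappa)\setminus U_{k,l}(\kappa)$, so you should recast the induction as showing that the remainder shrinks. Second, Theorem~\ref{Scalingprop} sends $\lambda$ to $g^m(\lambda)$, so it does not literally reduce the problem to a single base configuration for the same $\lambda$; you would need to prove the lemma uniformly in $\lambda$ and then invoke self-similarity, which is more bookkeeping than the paper's direct argument.
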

\begin{proof}
For brevity we will drop the parameters $\kappa$ when they are unambiguous.  We will first prove the equality
\begin{equation*}
U(\kappa) = 	\begin{cases}
	(C_0 - \Delta_{m_0,0}) \cap C_c \cap (C_{d+1} - (n_0\Delta_{m_0,0} + \Delta_{m_0-1,0})), 	&\text{if } m_0 \text{ is even,} \\[1.5mm]
	(C_0 - (n_0\Delta_{m_0,0} + \Delta_{m_0-1,0})) \cap C_c \cap (C_{d+1} - \Delta_{m_0,0}), 	&\text{if } m_0 \text{ is odd.}
	\end{cases}.
\end{equation*}
Observe that for all $(m,n) \in \mathcal{N}_\lambda^<$, 
\begin{equation*}
S_{m,n} \subset C_c.
\end{equation*}
Additionally, if $(m,n) \in \mathcal{N}_\lambda^<$ such that $w_\lambda(m,n) = j + k_0 - 1$, then
\begin{equation*}
S_{m,n} \subset 	\begin{cases}
			\bigcup_{m=m_0}^\infty (C_0 - \Delta_{m,0}), 	&\text{ if $m_0$ is even,} \\[2mm]
			\bigcup_{w_\lambda(m,n) \geq w_\lambda(m_0,n_0)} (C_0 - (n\Delta_{m,0} + \Delta_{m-1,0})), 	&\text{ if $m_0$ is odd.}
			\end{cases}
\end{equation*}
Note that $C_0 - (n\Delta_{m+1,0} + \Delta_{m,0}) \subset C_0 - \Delta_{m,0}$ for all $(m+1,n) \in \mathcal{N}_\lambda^<$.  Thus, we have
\begin{equation*}
S_{m,n} \subset 	\begin{cases}
			C_0 - \Delta_{m_0,0},						&\text{ if $m_0$ is even,} \\[1.5mm]
			C_0 - (n_0\Delta_{m_0,0} + \Delta_{m_0-1,0}), 	&\text{ if $m_0$ is odd.}
			\end{cases}.
\end{equation*}
We also know that
\begin{equation*}
S_{m,n} \subset 	\begin{cases}
			\bigcup_{w_\lambda(m,n) \geq w_\lambda(m_0,n_0)} (C_{d+1} - (n\Delta_{m,0} + \Delta_{m-1,0})), 	&\text{ if $m_0$ is even,} \\[2mm]
			\bigcup_{m=m_0+1}^\infty (C_{d+1} - \Delta_{m,0}), 	&\text{ if $m_0$ is odd.}
			\end{cases}
\end{equation*}
Thus, with a similar argument as above, we can show that
\begin{equation*}
S_{m,n} \subset 	\begin{cases}
			C_{d+1} - (n_0\Delta_{m_0,0} + \Delta_{m_0-1,0}),	&\text{ if $m_0$ is even,} \\[1.5mm]
			C_{d+1} - \Delta_{m_0,0},						&\text{ if $m_0$ is odd.}
			\end{cases},
\end{equation*}
Altogether, we deduce that
\begin{equation*}
U(\kappa) \subset 	\begin{cases}
			(C_0 - \Delta_{m_0,0}) \cap C_c \cap (C_{d+1} - (n_0\Delta_{m_0,0} + \Delta_{m_0-1,0})), 	&\text{if } m_0 \text{ is even,} \\[1.5mm]
			(C_0 - (n_0\Delta_{m_0,0} + \Delta_{m_0-1,0})) \cap C_c \cap (C_{d+1} - \Delta_{m_0,0}), 	&\text{if } m_0 \text{ is odd.}
			\end{cases}.
\end{equation*}

Suppose $m_0$ is even, and let
\begin{equation*}
z \in (C_0 - \Delta_{m_0,0}) \cap C_c \cap (C_{d+1} - (n_0\Delta_{m_0,0} + \Delta_{m_0-1,0})),
\end{equation*}
with $z \neq 0$.  Then there is some $(m,n) \in \mathcal{N}_\lambda^<$ with $m$ is odd and $w_\lambda(m,n) \geq w_\lambda(m_0,n_0)$ such that 
\begin{equation*}
z \in C_0 - (n\Delta_{m,0} + \Delta_{m-1,0}),
\end{equation*}
and there is some $(m',n') \in \mathcal{N}_\lambda^<$ with $m'$ even and $w_\lambda(m',n') \geq w_\lambda(m_0,n_0)$ such that 
\begin{equation*}
z \in C_{d+1} - (n\Delta_{m,0} + \Delta_{m-1,0}).
\end{equation*}
Suppose that $(m,n)$ and $(m',n')$ are the largest such pairs, which is well-defined since $z \neq 0$.  Since $m$ is odd and $m'$ is even, we either have $m < m'$ or $m' < m$.  Suppose $m < m'$.  Then
\begin{equation*}
z \in (C_0 - (n\Delta_{m,0} + \Delta_{m-1,0})) \cap C_c \cap (C_{d+1} - \Delta_{m,0}).
\end{equation*}
Since $(m,n)$ is the largest pair such that $z \in C_0 - (n\Delta_{m,0} + \Delta_{m-1,0})$, and noting that $(n+1)\Delta_{m,0} + \Delta_{m-1,0} = \Delta_{m,n+1}$ since $n+1>0$, we have that
\begin{equation*}
z \in (C_c - \Delta_{m,n+1}) \text{ or } z \in (C_{d+1} - \Delta_{m,n+1}).
\end{equation*}
However, $C_{d+1} - \Delta_{m,n+1} \subset C_{d+1}$ since $m$ is odd and so $\Delta_{m,n+1} > 0$.  Importantly, 
\begin{equation*}
C_c \cap (C_{d+1} - \Delta_{m,n+1}) = \emptyset,
\end{equation*}
and thus $z \in C_c - \Delta_{m,n+1}$.  Therefore,
\begin{equation*}
z \in (C_0 - (n\Delta_{m,0} + \Delta_{m-1,0})) \cap (C_c - \Delta_{m,n+1}) \cap C_c \cap (C_{d+1} - \Delta_{m,0}) = S_{m,n},
\end{equation*}
so clearly $z \in U(\kappa)$.  In the case that $m' < m$ we can use a similar argument to prove that
\begin{equation*}
\begin{aligned}
z 	&\in (C_0 - \Delta_{m',0}) \cap C_c \cap (C_c - \Delta_{m',n'+1}) \cap (C_{d+1} - (n'\Delta_{m',0} + \Delta_{m'-1,0})) \\
	&= S_{m',n'},
\end{aligned}
\end{equation*}
so that $z \in U(\kappa)$.  If $m_0$ is odd and
\begin{equation*}
z \in (C_0 - (n_0\Delta_{m_0,0} + \Delta_{m_0-1,0})) \cap C_c \cap (C_{d+1} - \Delta_{m_0,0}),
\end{equation*}
with $z \neq 0$, then we can use a similar argument to prove that $z \in U(\kappa)$.  Hence, we have
\begin{equation*}
U(\kappa) = 	\begin{cases}
	(C_0 - \Delta_{m_0,0}) \cap C_c \cap (C_{d+1} - (n_0\Delta_{m_0,0} + \Delta_{m_0-1,0})) 	&\text{if } m_0 \text{ is even} \\[1.5mm]
	(C_0 - (n_0\Delta_{m_0,0} + \Delta_{m_0-1,0})) \cap C_c \cap (C_{d+1} - \Delta_{m_0,0}) 	&\text{if } m_0 \text{ is odd}
	\end{cases}.
\end{equation*}
To show that $U$ is convex, one must note that the cones $C_0$, $C_c$, and $C_{d+1}$ and all their translates are convex sets, and that the intersection of convex sets is also convex.
\end{proof}

Define 
\begin{equation}
U_{k,l}(\kappa) = \{0\} \cup \bigcup_{\substack{(m,n) \in \mathcal{N}_\lambda^< \\ w_\lambda(m,n) \geq w_\lambda(k,l)}} S_{m,n},
\end{equation}
for $w_\lambda(k,l) \geq w_\lambda(m_0,n_0)$ (omitting the arguments where unambiguous), and let \begin{equation*}
\kappa' = (\alpha, \tau, \lambda, \eta', \rho),
\end{equation*}
where $-\lambda < \eta' = p' - q'\lambda < \rho$ is such that $p' \leq p$ and $q' \leq q$.  If $(m_0', n_0') \in \mathcal{N}_\lambda^<$ is the maximal element of $\mathcal{N}_\lambda^<$ such that $P_{m_0',n_0'} < p'$ or $Q_{m'_0,n'_0} < q'$, then $w_\lambda(m_0',n_0') \leq w_\lambda(m_0,n_0)$.  Furthermore,
\begin{equation}
\label{intersectionsequal}
R_\kappa |_{U_{m_0,n_0}(\kappa)}(z) = R_{\kappa'} |_{U_{m_0,n_0}(\kappa)}(z).
\end{equation}

Given $\alpha \in \mathbb{B}$, $\tau: \{1,...,d\} \rightarrow \{1,...,d\}$ be a bijection, $\lambda \in [0,1) \setminus \mathbb{Q}$, $\rho = 1$, $-\lambda < \eta = p - q\lambda < 1$ for some $p,q \in \N$, let $(m_0,n_0) \in \mathcal{N}_\lambda^<$ be as in \eqref{m0n0}.  Let $p', q' \in \N \setminus\{0\}$ be defined by
\begin{equation*}
p' = P_{m_0,0}(g^2(\lambda)), \text{ and } q' = Q_{m_0,0}(g^2(\lambda)),
\end{equation*}
and let
\begin{equation}
\label{neweta}
\eta' = p' - q'g^2(\lambda).
\end{equation}
Clearly by the definition of the one-sided convergents in \eqref{onesideconv}, we have 
\begin{equation*}
-g^2(\lambda) < \eta < 1.    
\end{equation*}

With this in mind, we have the following Theorem.

\begin{theorem}
\label{Renorm}
Let $\alpha \in \mathbb{B}^{d+2}$, $\tau: \{1,...,d\} \rightarrow \{1,...,d\}$ be a bijection, $\lambda \in [0,1) \setminus \Q$ and $-\lambda < \eta = p - q\lambda < 1$ for some $p,q \in \N$. Set the tuples $\kappa = (\alpha,\tau,\lambda,\eta,1)$ and $\kappa' = (\alpha,\tau,\lambda',\eta',1)$, where $\lambda' = g^2(\lambda)$ and $\eta'$ is as in \eqref{neweta}.  Then for all $z \in U_{m_0,0}(\kappa')$,
\begin{equation*}
R_{\kappa'}(z) = \frac{1}{1 - \lambda_1\lambda} R_\kappa((1 - \lambda_1\lambda)z).
\end{equation*}
\end{theorem}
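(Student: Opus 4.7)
The plan is to reduce both sides of the claimed identity to the explicit return-map formula $R_\kappa(z)=E(z)+\Delta_{m,n+1}(\lambda)$ of Theorem \ref{conje}, and then to match them using the two scaling principles already established: Theorem \ref{Scalingprop} for the parallelograms $S_{m,n}$ and Lemma \ref{Deltas} for the semiconvergent errors $\Delta_{m,n}$. A short computation from \eqref{convergents} with $\lambda_0=0$, $p_1=1$, $q_1=\lambda_1$ gives $\Delta_{1,0}(\lambda)=\lambda_1\lambda-1<0$, so $|\Delta_{1,0}(\lambda)|=1-\lambda_1\lambda$; this is precisely the scale factor produced by Theorem \ref{Scalingprop} and Lemma \ref{Deltas} when we take $m=2$, and is the same factor that appears in the statement. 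The identification $\lambda'=g^2(\lambda)$ in the theorem is therefore exactly what is needed to turn the scaling lemmas into a renormalization identity.

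Concretely, I would fix a nonzero $z\in U_{m_0,0}(\kappa')$ and write $z\in E^{-1}(S_{m,n}(\lambda'))$ for the unique pair $(m,n)\in\mathcal{N}_{\lambda'}^<$ with $w_{\lambda'}(m,n)\geq w_{\lambda'}(m_0,0)$. Theorem \ref{conje} applied at $\kappa'$ then gives
\begin{equation*}
R_{\kappa'}(z)=E(z)+\Delta_{m,n+1}(\lambda').
\end{equation*}
Next, consider the scaled point $(1-\lambda_1\lambda)z$. Theorem \ref{Scalingprop} with $m=2$ reads $S_{m,n}(\lambda')=(1-\lambda_1\lambda)^{-1}S_{m+2,n}(\lambda)$, and because each cone $C_j$ is invariant under positive dilation about the origin (by \eqref{scalezP}), the rotation map $E$ commutes with multiplication by $1-\lambda_1\lambda>0$. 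Hence $(1-\lambda_1\lambda)z\in E^{-1}(S_{m+2,n}(\lambda))$, so Theorem \ref{conje} applied at $\kappa$ gives
\begin{equation*}
R_\kappa\bigl((1-\lambda_1\lambda)z\bigr)=(1-\lambda_1\lambda)E(z)+\Delta_{m+2,n+1}(\lambda).
\end{equation*}
Dividing by $1-\lambda_1\lambda$ and invoking Lemma \ref{Deltas} (with $m=2$, $j=m$) to rewrite $\Delta_{m+2,n+1}(\lambda)/(1-\lambda_1\lambda)=\Delta_{m,n+1}(\lambda')$ then closes the loop and yields the claimed equality.

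The main obstacle is the index bookkeeping needed to legitimize both uses of Theorem \ref{conje}. On the $\kappa'$ side one needs $w_{\lambda'}(m,n)>w_{\lambda'}(m_0',n_0')$, where $(m_0',n_0')$ is the pair attached to $\kappa'$ by \eqref{m0n0}; the specific choice $p'=P_{m_0,0}(\lambda')$, $q'=Q_{m_0,0}(\lambda')$ in \eqref{neweta} is arranged so that $(m_0',n_0')$ is the immediate predecessor of $(m_0,0)$ in the well-ordering $<'$, which is exactly what is required, and this is verified from the monotonicity of $P_{m,n}(\lambda')$ and $Q_{m,n}(\lambda')$ together with \eqref{wrapping}. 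On the $\kappa$ side one needs $w_\lambda(m+2,n)>w_\lambda(m_0,n_0)$, which follows from $m\geq m_0$ together with \eqref{wrapping} and the inequality $n_0<\lambda_{m_0+1}$ built into $\mathcal{N}_\lambda^<$. Finally, the point $z=0$ requires a separate remark: since the orbit of $0$ under $F_\kappa$ stays on $\R\setminus\{0\}$ by Lemma \ref{orbitzero}, $R_\kappa(0)$ is not literally defined, and the identity should be read on $U_{m_0,0}(\kappa')\setminus\{0\}$ with $0$ included only to record that $V$ is a neighbourhood of the origin.
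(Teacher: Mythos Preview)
Your proposal is correct and follows essentially the same route as the paper's proof: both arguments apply Theorem \ref{conje} on each parallelogram to obtain the explicit formulas $R_\kappa(\,\cdot\,)=E(\,\cdot\,)+\Delta_{m+2,n+1}(\lambda)$ and $R_{\kappa'}(\,\cdot\,)=E(\,\cdot\,)+\Delta_{m,n+1}(g^2(\lambda))$, invoke Theorem \ref{Scalingprop} with $m=2$ together with the scaling relation $E(az)=aE(z)$ to pass between them, and then close the identity via Lemma \ref{Deltas} after noting $\Delta_{1,0}(\lambda)=\lambda_1\lambda-1$. Your handling of the index bookkeeping for $(m_0',n_0')$ and your remark about $z=0$ are slightly more explicit than the paper's, but the strategy is identical.
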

\begin{proof}
We begin by noting that the quantity
\begin{equation}
\label{mnmin2}
(m_0',n_0') = \max\{(m,n) \in \mathcal{N}_{g^2(\lambda)}^< : P_{m,n}(g^2(\lambda)) < p' \text{ or } Q_{m,n}(g^2(\lambda)) < q' \},
\end{equation}
satisfies $(m_0',n_0') = (m_0,0)$ and, importantly for the reasons of \eqref{intersectionsequal}, we have
\begin{equation}
\label{winequality}
w_\lambda(m_0'+2,n_0') \leq w_\lambda(m_0+2,0).
\end{equation}
Here we recall the equality in Proposition \ref{interchange} since $(m_0',n_0')$ is being considered an element of $\mathcal{N}_{g^2(\lambda)}^<$ and $(m_0,0)$ an element of $\mathcal{N}_\lambda^<$.

Recall that by Theorem \ref{Scalingprop}, we have
\begin{equation*}
\frac{1}{1 - \lambda_1\lambda}S_{j+2,n}(\lambda) = S_{j,n}(g^2(\lambda)),
\end{equation*}
for all $(j+2,n) \in \mathcal{N}_\lambda^<$ with $w_\lambda(j+2,n) \geq w_\lambda(m_0+2,0)$, i.e. $j \geq m_0$.  Note that by Proposition \ref{interchange}, 
\begin{equation*}
w_\lambda(m_0'+2,n_0') \leq w_\lambda(m_0+2,0),
\end{equation*}
is equivalent to the statement that 
\begin{equation*}
w_{g^2(\lambda)}(m_0', n_0') \leq w_{g^2(\lambda)}(m_0, 0).
\end{equation*}
Hence,
\begin{equation*}
\frac{1}{1 - \lambda_1\lambda}U_{m_0+2,0}(\kappa) = U_{m_0,0}(\kappa').
\end{equation*}
Let $z \in C_c$ so that $(1 - \lambda_1\lambda)z \in S_{m+2,n}(\lambda)$ with $w_\lambda(m+2,n) \geq w_\lambda(m_0+2,0)$.  Also note that since $aC_j = C_j$ for all $a > 0$ and $E$ consists of rotations about 0, we have 
\begin{equation}
\label{Escaleinvariant}
E(az) = aE(z),
\end{equation} 
for $a > 0$.  Therefore $z \in S_{m,n}(g^2(\lambda))$ and by expanding $R_\kappa$ as in \eqref{NewRz} we get
\begin{equation*}
\frac{1}{1 - \lambda_1\lambda} R_\kappa\left( (1 - \lambda_1\lambda)z \right) = \frac{1}{1 - \lambda_1\lambda}\left( E((1 - \lambda_1\lambda)z) + \Delta_{m+2,n+1}(\lambda) \right).
\end{equation*}
Using \ref{Escaleinvariant}, we get
\begin{align*}
\frac{1}{1 - \lambda_1\lambda} R_\kappa\left( (1 - \lambda_1\lambda)z \right) 
													&= \frac{1}{1 - \lambda_1\lambda}\left( (1 - \lambda_1\lambda)E(z) + \Delta_{m+2,n+1}(\lambda) \right) \\
													&= E(z) + \left( -\frac{\Delta_{m+2,n+1}(\lambda)}{\lambda_1\lambda - 1} \right).
\end{align*}
Now, recall that $p_1 = 1$ and $q_1 = \lambda_1$, so that $\Delta_{1,0}(\lambda) = \lambda_1\lambda - 1$.  Using \eqref{Delta} and comparing with the formula for $R_{\kappa'}$ as in \eqref{NewRz}, we see
\begin{align*}
\frac{1}{1 - \lambda_1\lambda} R_\kappa\left( (1 - \lambda_1\lambda)z \right) 	&= E(z) + \left( -\frac{\Delta_{m+2,n+1}(\lambda)}{\Delta_{1,0}(\lambda)} \right) \\
															&= E(z) + \Delta_{m,n+1}(g^2(\lambda)) \\
															&= R_{\kappa'}(z).
\end{align*}
\end{proof}

The immediate consequence of Theorem \ref{Renorm} is that when $\lambda \in [0,1) \setminus \mathbb{Q}$ and $-\lambda < \eta = p-q\lambda < 1$, one can renormalize infinitely ``towards'' 0 in the sense that the domains of each renormalization are shrinking neighbourhoods of 0 in $C_c$.  Additionally, the renormalizations are determined by the orbit of $\lambda$ under the square of the Gauss map $g^2(x)$.  We shall now apply our results in this and the previous section to an example.

\section{An Example}
As an example inspired by \cite{PR18}, we will set $\alpha \in \mathbb{B}^{d+2}$, $\lambda = \Phi$, $\rho = 1$, and $\eta = \Phi^2$, where 
\begin{equation}
\Phi = \frac{\sqrt{5} - 1}{2}.
\end{equation}
In this case the the space $\mathcal{N}_\lambda^< = \N \times \{0\} \cong \N$ since $\lambda = [0;\bar{1}]$ and so $\lambda_k = 1$ for all $k \in \N$.  Thus,  $w_\Phi(m,n) = w_\Phi(m) = m$.  Also due to $\lambda_k = 1$ for each $k \in \N$, the semiconvergents $P_m/Q_m$ simply coincide with the convergents $p_m/q_m$, and the convergents are in this case defined by 
\begin{equation*}
\begin{aligned}
p_0 &= 0,					&&q_0 = 1, \\
p_1 &= 1,					&&q_1 = 1, \\
p_m &= p_{n-1} + p_{n-2}, 	&&q_m = q_{m-1} + q_{m-2}.
\end{aligned}
\end{equation*}
It is thus clear that $p_m = \operatorname{Fib}_m$ and $q_m = \operatorname{Fib}_{m+1}$, where $(\operatorname{Fib}_m)_{m \in \N}$ is the Fibonacci sequence with $\operatorname{Fib}_0 = 0$ and $\operatorname{Fib}_1 = 1$.

The first return times $(h_m)_{m \in \N}$ in the case of $\lambda = \Phi$ are given by
\begin{equation*}
h_m = (q_m - 1) + (p_m - 1) + 1 = \operatorname{Fib}_{m+1} + \operatorname{Fib}_m - 1 = \operatorname{Fib}_{m+2} - 1
\end{equation*}
for all $m \in \N$. Observe that $h_m = h_{m-1} + h_{m-2} + 1$ for all $m \geq 2$, and $h_0 = 2$, $h_1 = 4$.  Now note that $\eta = \Phi^2 = 1 - \Phi = 1 - \lambda$, and thus we have
\begin{equation*}
\begin{aligned}
m_0 = \max\{ m \in \N : p_m < 1 \text{ or } q_m < 1 \} = 0.
\end{aligned}
\end{equation*}
The errors of the convergents $\Delta_m(\Phi)$ are given by
\begin{equation}
\label{Phis}
\Delta_m(\Phi) = q_m\lambda - p_m = \operatorname{Fib}_{m+1}\Phi - \operatorname{Fib}_m.
\end{equation}

\begin{figure}
\centering
\includegraphics[width=\linewidth]{{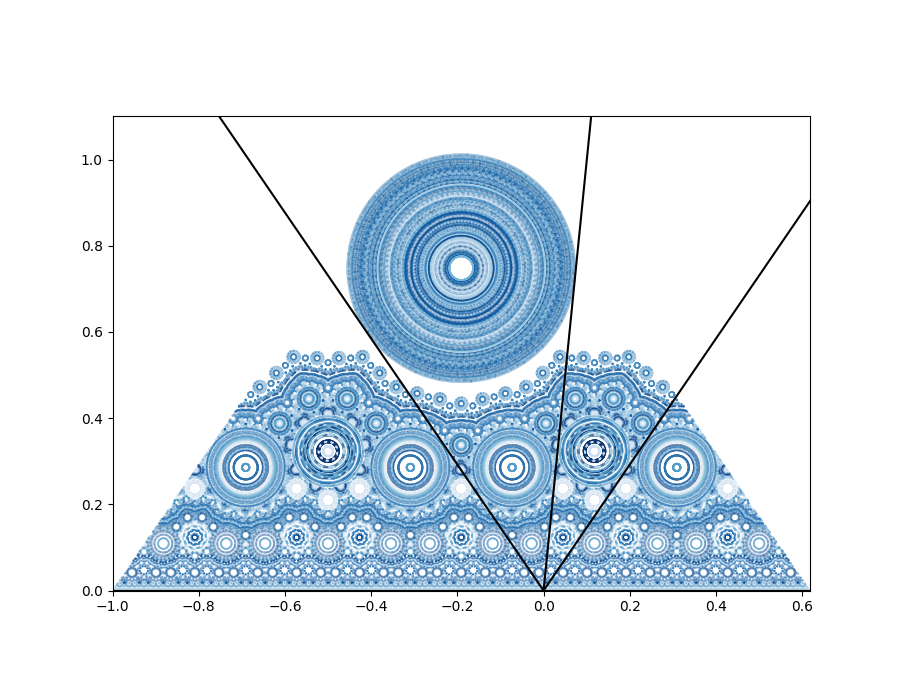}}
\caption{A plot of 1500 iterates of 500 uniformly chosen points within the box $[-1,\lambda] \times [0,1.1]$ under the TCE with parameters $\alpha = (\pi/2-0.6,0.5,0.7,\pi-0.6)$, $\tau: 1 \mapsto 2, 2 \mapsto 1$, $\lambda = \Phi$, $\eta = \Phi^2$, and $\rho = 1$.  The first 400 points of each orbit are omitted to remove transients.}
\label{plot}
\end{figure}

\begin{proposition}
We have
\begin{equation*}
\Delta_m(\Phi) = -(-\Phi)^{m+1}
\end{equation*}
\end{proposition}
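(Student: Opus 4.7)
The plan is to prove this by straightforward induction on $m$, using the recurrence relation for the errors $\Delta_{m,0}(\lambda)$ that was established earlier in Section 3, together with the defining algebraic identity $\Phi^2 = 1 - \Phi$.

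First, I would observe that since $\Phi = [0;\bar 1]$, all partial quotients satisfy $\lambda_m = 1$, so the recurrence
\begin{equation*}
\Delta_{m,0}(\lambda) = \lambda_m \Delta_{m-1,0}(\lambda) + \Delta_{m-2,0}(\lambda)
\end{equation*}
specialises for $\lambda = \Phi$ (writing $\Delta_m := \Delta_{m,0}(\Phi)$) to the Fibonacci-like recurrence $\Delta_m = \Delta_{m-1} + \Delta_{m-2}$ for $m \geq 2$. I would then verify the two base cases using \eqref{Phis}: $\Delta_0 = q_0\Phi - p_0 = \Phi = -(-\Phi)^1$, and $\Delta_1 = q_1\Phi - p_1 = \Phi - 1$, which equals $-(-\Phi)^2 = -\Phi^2 = \Phi - 1$ by the identity $\Phi^2 = 1-\Phi$.

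For the inductive step, suppose the formula holds at indices $m-1$ and $m-2$. Then
\begin{equation*}
\Delta_m = \Delta_{m-1} + \Delta_{m-2} = -(-\Phi)^m - (-\Phi)^{m-1} = -(-\Phi)^{m-1}\bigl((-\Phi) + 1\bigr) = -(-\Phi)^{m-1}(1 - \Phi).
\end{equation*}
The key algebraic move is then to rewrite $1 - \Phi = \Phi^2 = (-\Phi)^2$, from which
\begin{equation*}
\Delta_m = -(-\Phi)^{m-1}\cdot(-\Phi)^2 = -(-\Phi)^{m+1},
\end{equation*}
completing the induction.

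There is essentially no obstacle here: the only thing requiring care is bookkeeping of signs and the single appeal to the golden-ratio identity $\Phi^2 + \Phi - 1 = 0$. An alternative route via Binet's formula $\mathrm{Fib}_m = (\phi^m - \psi^m)/\sqrt{5}$ with $\phi = 1/\Phi$ and $\psi = -\Phi$ also works and yields $\Delta_m = \psi^m\Phi = (-\Phi)^m\Phi = -(-\Phi)^{m+1}$ directly, but the induction is both shorter and more in keeping with the recursive definitions used throughout Section 3, so that is the route I would take.
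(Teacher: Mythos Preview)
Your proof is correct and essentially the same as the paper's: both argue by induction using the recurrence for $\Delta_m$ together with the identity $\Phi^2 = 1-\Phi$. The only cosmetic difference is that the paper first collapses the second-order recurrence to the first-order relation $\Delta_m = -\Phi\,\Delta_{m-1}$ (so only one base case is needed), whereas you keep the Fibonacci-type recurrence $\Delta_m = \Delta_{m-1}+\Delta_{m-2}$ and check two base cases.
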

\begin{proof}
Observe that
\begin{equation*}
\begin{aligned}
\Delta_m(\Phi) 	&= \operatorname{Fib}_{m+1}\Phi - \operatorname{Fib}_m \\
			&= \operatorname{Fib}_m\Phi + \operatorname{Fib}_{m-1}\Phi - \operatorname{Fib}_m \\
			&= \operatorname{Fib}_{m-1}\Phi - (1 - \Phi)\operatorname{Fib}_m \\
			&= -\Phi(\operatorname{Fib}_m\Phi - \operatorname{Fib}_{m-1}) \\
			&= -\Phi\Delta_{m-1}(\Phi).
\end{aligned}
\end{equation*}
Recall that $p_0 = 0$ and $q_0 = 1$.  Then a simple inductive argument shows us that
\begin{equation*}
\Delta_m(\Phi) = (-\Phi)^m\Delta_0(\Phi) = (-\Phi)^m(q_0\Phi - p_0) =  -(-\Phi)^{m+1}
\end{equation*}
\end{proof}

\begin{figure}
\centering
\includegraphics[width=\linewidth]{{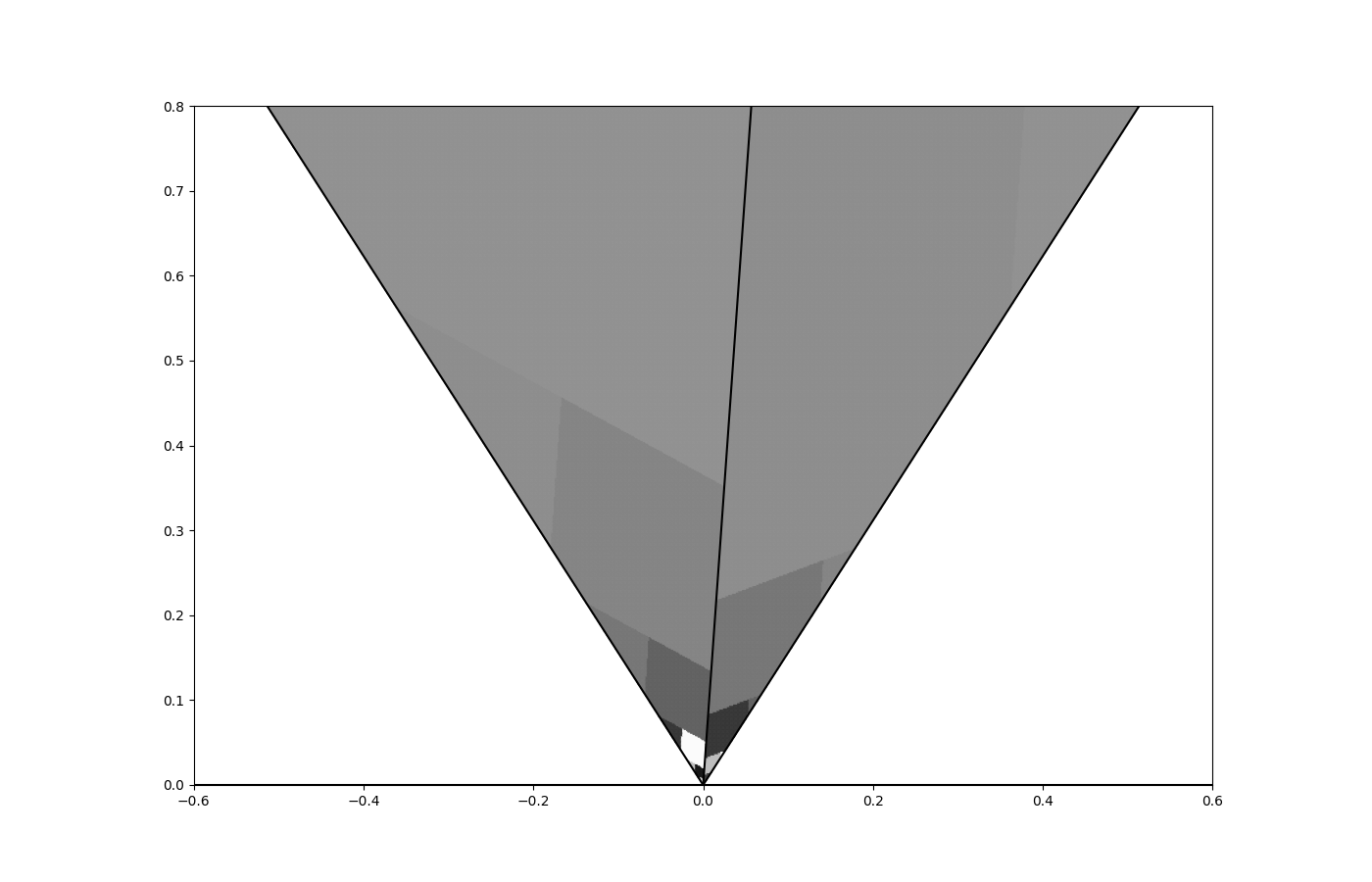}}
\caption{A partition of $C_c$ in the case where $\alpha = (1,0.5,\pi-2.5,1)$, $\tau: 1 \mapsto 2, 2 \mapsto 1$, $\lambda = \Phi$, $\eta = \Phi^2$, and $\rho = 1$.  A cascading pattern towards the origin can be seen, but its geometric structure becomes clearer after we apply the cone exchange $E$.}
\label{partpreE}
\end{figure}

Noting that the recurrence relations for $p_m$ and $q_m$ give us $p_{-1} = 1$ and $q_{-1} = 0$ and so we can set $\Delta_{-1} = -1 = -(-\Phi)^{0}$, which remains consistent with \eqref{Phis}.
With this proposition in mind, for $m \in \N$ we can determine the sets $S_m$ as
\begin{equation*}
\begin{aligned}
S_m 	&= 	\begin{cases}
			(C_0 - \Delta_m) \cap C_c \cap (C_c - \Delta_{m+1}) \cap (C_{d+1} - \Delta_{m-1}),	&\text{ if $m$ is even,} \\[1.5mm]
			(C_0 - \Delta_{m-1}) \cap (C_c - \Delta_{m+1}) \cap C_c \cap (C_{d+1} - \Delta_m),	&\text{ if $m$ is odd.}
		\end{cases} \\
	&=	\begin{cases}
			(C_0 + (-\Phi)^{m+1}) \cap C_c \cap (C_c + (-\Phi)^{m+2}) \cap (C_{d+1} + (-\Phi)^m),	&\text{ if $m$ is even,} \\[1.5mm]
			(C_0 + (-\Phi)^m) \cap (C_c + (-\Phi)^{m+2}) \cap C_c \cap (C_{d+1} + (-\Phi)^{m+1}),	&\text{ if $m$ is odd.}
		\end{cases}
\end{aligned}
\end{equation*}
These are rhombi, as can be seen in figure \ref{partpostE}, and as can be deduced from the discussion around \eqref{sidelengths} since $\alpha_0 = \alpha_{d+1}$.  It is also clear to see that for all $m \in \N$.
\begin{equation}
\label{selfsimilar}
S_{m+2} = \Phi^2 S_m.
\end{equation}
In this case, it is simple to find a partition for the entirity of the middle cone $C_c$ for the map $R_\kappa$.  In particular, define the sets $X$ and $Y$ to be
\begin{equation}
\label{X}
X = C_c \cap (C_c - (\lambda - \eta)) \cap (C_{d+1} + \eta) = C_c \cap (C_c - \Phi^3) \cap (C_{d+1} + \Phi^2),
\end{equation}
and
\begin{equation}
\label{Y}
Y = C_c \cap (C_c + \eta) = C_c \cap (C_c + \Phi^2).
\end{equation}
As we will see soon, the collection $\{X, Y, S_n : n \geq 2\}$ forms a partition of $C_c$.  We are interested in the pre-image of these sets under $E$.  In particular, define the partition 
\begin{equation*}
\mathcal{C}' = \left\{ E^{-1}(S) \cap C_j : j \in \{1, ..., d\}, S \in \{Y, X, S_2, S_3, ...,\} \right\}.
\end{equation*}

\begin{figure}
\centering
\includegraphics[width=\linewidth]{{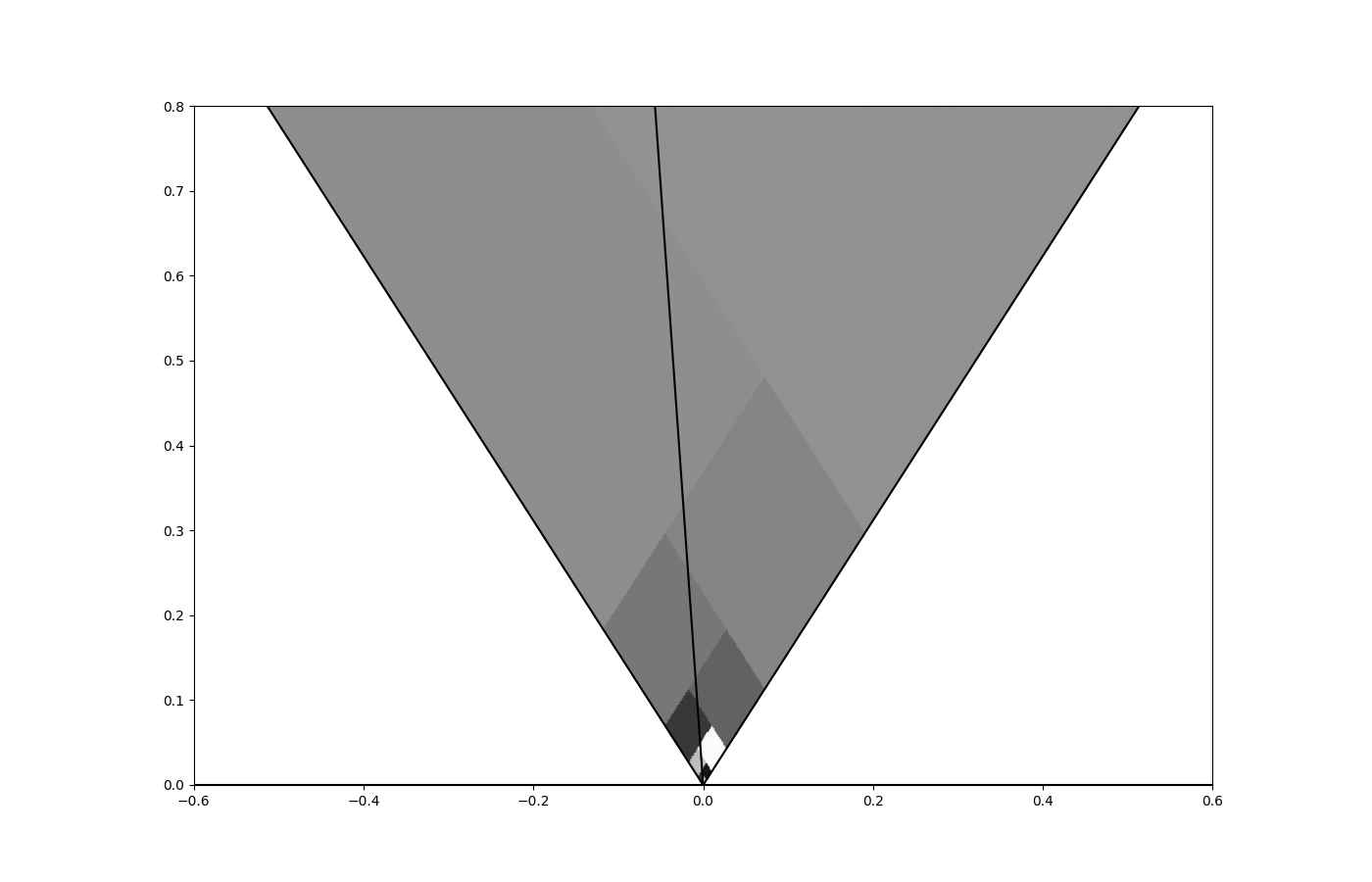}}
\caption{The same partition as in figure \ref{partpreE}, but after an application of $E$, which reveals an alternating pattern of rhombi.}
\label{partpostE}
\end{figure}

This partition can be seen in figure \ref{partpreE}.  As a consequence of the next theorem, $(\mathcal{C}', R_\kappa)$ is a PWI with a countably infinite number of atoms.  We also define a separate family of sets
\begin{equation*}
\mathcal{Q} = \left\{ Q_{n,j} = E^{-1}(S_n) \cap C_j : n \in \N, j \in \{1, ..., d\} \right\},
\end{equation*}
which includes only the rhombi, and thus forms a partition of only a subset of the middle cone.  Note that since $\lambda_m = 1$ for all $m \in \N$ and $m_0 = 0$.  We see that the set $U(\kappa)$ from Lemma \ref{UnionFillsPc} is given by
\begin{equation*}
\begin{aligned}
U(\kappa) 	&= \{0\} \cup \bigcup_{m=0}^\infty S_m \\
	&= (C_0 - \Delta_0) \cap C_c \cap (C_{d+1} - \Delta_{-1}) \\
	&= (C_0 - \Phi) \cap C_c \cap (C_{d+1} + 1).
\end{aligned}
\end{equation*}
Also observe that by removing $S_0$ and $S_1$ we get
\begin{equation*}
U_{2,0}(\kappa) = \{0\} \cup \bigcup_{m=2}^\infty S_m = (C_0 - \Phi^3) \cap C_c \cap (C_{d+1} + \Phi^2).
\end{equation*}
From this, we notice that
\begin{equation*}
U_2(\kappa) \cup X = C_c \cap (C_{d+1} + \Phi^2) \cap \left( (C_c - \Phi^3) \cup (C_0 - \Phi^3) \right),
\end{equation*}
but since $C_{d+1} - \Phi^3 \subset C_{d+1}$, we know that $C_c \cap (C_{d+1} - \Phi^3) = \emptyset$, so
\begin{align*}
U_2(\kappa) \cup X 	&= C_c \cap (C_{d+1} + \Phi^2) \cap \left( (C_{d+1} - \Phi^3) \cup (C_c - \Phi^3) \cup (C_0 - \Phi^3) \right) \\
			&= C_c \cap (C_{d+1} + \Phi^2) \cap \ol{\mathbb{H}} \\
			&= C_c \cap (C_{d+1} + \Phi^2).
\end{align*}
Therefore, we can see that
\begin{equation*}
U_2(\kappa) \cup X \cup Y = C_c \cap \left( (C_c + \Phi^2) \cup (C_{d+1} + \Phi^2) \right),
\end{equation*}
and a similar argument tells us that $C_c \cap (C_0 + \Phi^2) = \emptyset$ and so finally we get,
\begin{equation*}
U_2(\kappa) \cup X \cup Y = C_c \cap \left( (C_0  + \Phi^2) \cup (C_c + \Phi^2) \cup (C_{d+1} + \Phi^2) \right) = C_c \cap \ol{\mathbb{H}} = C_c.
\end{equation*}
Therefore, $\mathcal{C}'$ is a partition of $C_c$ up to a set of Lebesgue measure 0.

Note that for all $(m,n) \in \mathcal{N}_\Phi^<$, $w_\Phi(m,1) = w_\Phi(m+1,0)$, and so by recalling that $w_\Phi(m,0) = w_\Phi(m) = m$, the condition that $w_\Phi(m,1) > w_\Phi(m_0,0)$ is equivalent to the condition that
\begin{equation*}
w_\Phi(m+1,0) > w_\Phi(m_0,0),
\end{equation*}
which is itself equivalent to
\begin{equation*}
m > m_0 - 1 = -1.
\end{equation*}
With this in mind, Theorem \ref{conje} tells us that for all $m \in \N$, if $z \in E^{-1}(S_m)$, then
\begin{equation*}
h(z) = h_{m,1} = h_{m+1,0} = h_{m+1} = \operatorname{Fib}_{m+3} - 1,
\end{equation*}
and 
\begin{equation*}
R_\kappa(z) = E(z) + \Delta_{m+1} = E(z) - (-\Phi)^{m+2},
\end{equation*}

\begin{figure}
\centering
\includegraphics[width=\linewidth]{{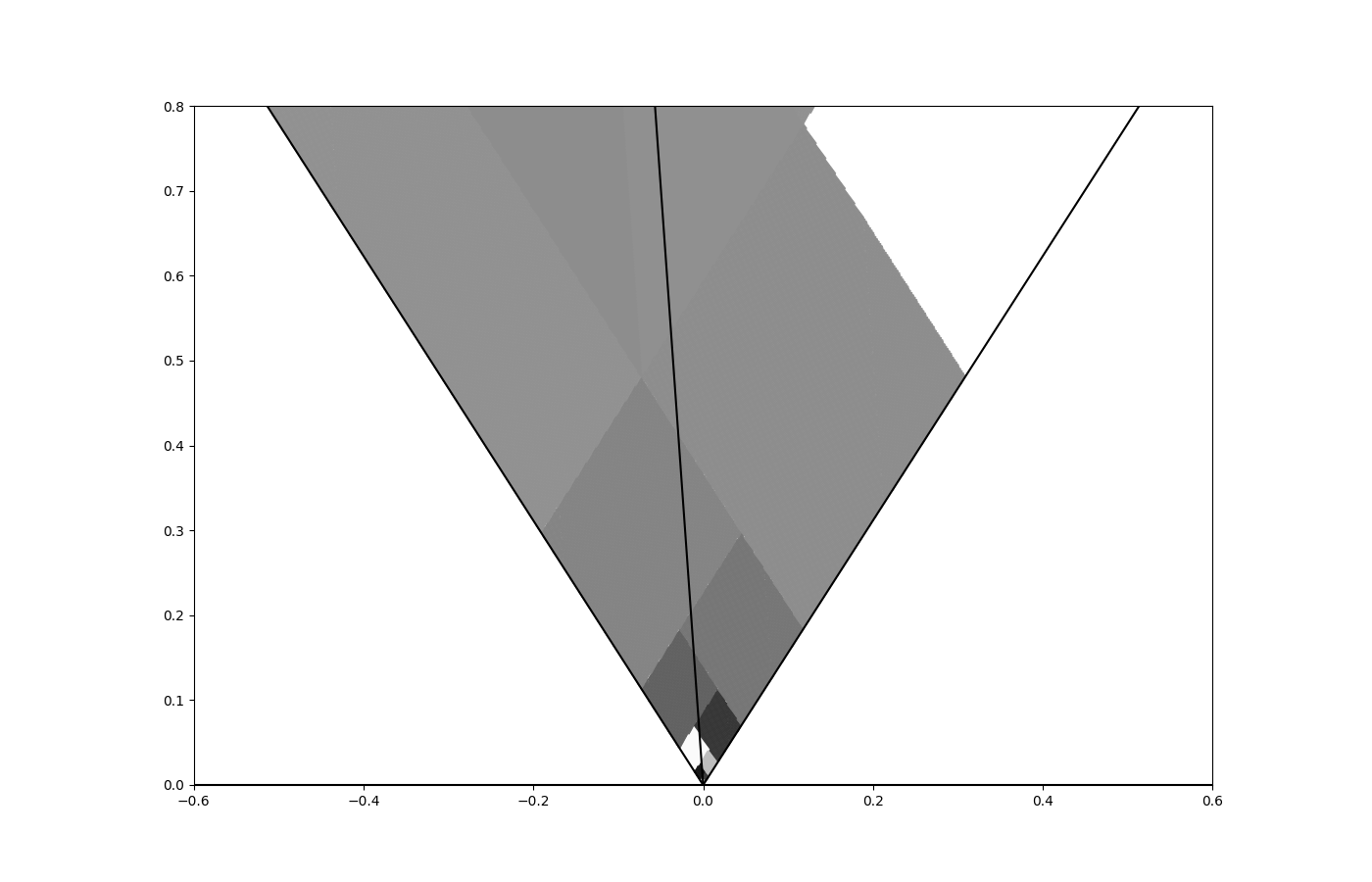}}
\caption{The same partition of $C_c$ as in figure \ref{partpreE}, after an application of $R_\kappa$, which has shifted the rhombi alternately.  Note that there is an overlap between the cone and ribbon (both of which are more clearly seen in figure \ref{partpostE}) on the top of the figure, causing an unavoidable overlap of the colours.}
\label{partpostR}
\end{figure}

Observe that $\lambda = \Phi$ is a special case of irrational number within $[0,1]$ in the sense that it is a fixed point of the Gauss map $g$.  Thus, $g^2(\Phi) = \Phi$ and thus we can choose $\eta' = \eta = 1 - \Phi = \Phi^2$ and Theorem \ref{Renorm} tells us that the first return map $R_\kappa$ exhibits exact self-similarity within $U(\kappa)$.  In particular, for all $z \in U_{0,0}(\kappa) = U(\kappa)$, we have the following conjugacy
\begin{equation*}
R_\kappa(z)  = \frac{1}{-\Delta_1}R_\kappa((-\Delta_1)z) = \frac{1}{\Phi^2}R_\kappa(\Phi^2z).
\end{equation*}

One consequence of this is that if there exists a periodic point $z \in S_m$ of period $k$, then $z$ is a periodic point of $R_\kappa$ with period $k/h_{m+1}$.  The self-similarity shows that for all $n \in \Z$ such that $2n \geq -m$, $\Phi^{2n}z$ is a periodic point of $R_\kappa$, thus also a periodic point of $F_\kappa$ whose period is an integer multiple of $h_{m+2n+1}$.  In particular, there is a sequence $(z_n)_{n \in \N}$ given by
\begin{equation}
\label{periodic_sequence}
z_n = \Phi^{2n-m}z, 
\end{equation}
so that for all $n \in \N$, $z_n \in S_{2n+\tilde{m}}$ and the period of $z_n$ is an integer multiple of $h_{2n+\tilde{m}+1}$, where $\{0,1\} \ni \tilde{m} \cong m \pmod{2}$.

Given a map $f : X \rightarrow X$, let $\mathcal{O}_f^+(x)$ denote the forward orbit of $x \in X$ under $f$, that is
\begin{equation*}
\mathcal{O}_f^+(x) = \{ f^n(x): n \in \N\}.
\end{equation*}

\begin{proposition}
\label{Application}
Suppose there exists a periodic point $z \in S_m$ for some $m \in \N$, and let $(z_n)_{n \in \N}$ be the sequence of periodic points given by \eqref{periodic_sequence}.  Then the sequence $\left( \mathcal{O}_{F_\kappa}^+(z_n) \right)_{n \in \N}$ of periodic orbits accumulates on the interval $[-1,\Phi]$.
\end{proposition}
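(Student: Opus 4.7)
The plan is to combine three ingredients: (a) $z_n \to 0$ as $n \to \infty$; (b) Lemma \ref{Split}, which says that during a first return the orbit of $z_n$ shadows the orbit of $0$ with a rigid offset $E(z_n)$; and (c) density of $\mathcal{O}_{F_\kappa}^+(0)$ in $[-1, \Phi]$.

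Ingredient (a) is immediate from $z_n = \Phi^{2n-m}z$ together with $0 < \Phi < 1$. For (c), Proposition \ref{FisG} reduces $F_\kappa|_\R$ to $G|_\R$, and a direct check of the piecewise formula for $G$ (with $\eta = 1 - \Phi$, $\lambda = \Phi$, $\rho = 1$) shows that $[-1, \Phi)$ is forward invariant: on $(0,\Phi)$ we have $G(x) = x-1 \in (-1,-\Phi^2)$; on $[-1,0)$ we have $G(x) = x+\Phi \in [-\Phi^2,\Phi)$; and $G(0) = -\Phi^2$. The translation $\varphi(x) = x+1$ then conjugates $G|_{[-1,\Phi)}$ to the rotation $y \mapsto y + \Phi \pmod{1+\Phi}$ on the circle of length $1+\Phi$. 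Since $\Phi$ is irrational, so is the rotation number $\Phi/(1+\Phi)$, so this rotation is minimal and $\mathcal{O}_{F_\kappa}^+(0)$ is dense in $[-1, \Phi]$.

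Given $x \in [-1, \Phi]$ and $\eps > 0$, density produces $j \in \N$ with $|F_\kappa^j(0) - x| < \eps/2$. Since $h(z_n) = h_{2n+\tilde{m}+1} = \operatorname{Fib}_{2n+\tilde{m}+3} - 1 \to \infty$ and $|z_n| \to 0$, we may choose $n$ large enough that $h_{2n+\tilde{m}+1} \geq j$ and $|z_n| < \eps/2$. Lemma \ref{Split} then gives
\begin{equation*}
F_\kappa^j(z_n) = E(z_n) + F_\kappa^j(0),
\end{equation*}
and since $E$ is an isometry fixing the origin, $|F_\kappa^j(z_n) - F_\kappa^j(0)| = |E(z_n)| = |z_n|$. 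The triangle inequality yields $|F_\kappa^j(z_n) - x| < \eps$, and since $F_\kappa^j(z_n) \in \mathcal{O}_{F_\kappa}^+(z_n)$, this exhibits $x$ as an accumulation point of $\bigcup_n \mathcal{O}_{F_\kappa}^+(z_n)$.

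The main content of the argument is ingredient (c), recognizing the baseline dynamics as an irrational circle rotation; once this is in place, Lemma \ref{Split} mechanically propagates the density from $\mathcal{O}_{F_\kappa}^+(0)$ to the union $\bigcup_n \mathcal{O}_{F_\kappa}^+(z_n)$, with the vertical errors vanishing at the geometric rate $\Phi^{2n-m}$ coming from the self-similarity \eqref{selfsimilar}.
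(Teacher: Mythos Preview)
Your proof is correct and mirrors the paper's strategy exactly: shadow the orbit of $0$ via Lemma \ref{Split}, use $z_n\to 0$ together with $h(z_n)\to\infty$, and invoke density of $\mathcal{O}_{F_\kappa}^+(0)$ in $[-1,\Phi]$ by recognising the baseline map as an irrational rotation. Two small imprecisions are worth tightening: your conjugacy of $G|_{[-1,\Phi)}$ to the rotation fails at the single point $x=0$ (there $G(0)=-\Phi^2$, whereas the rotation would send $0$ to $-1$), and the paper closes this by observing via Lemma \ref{bound1} that the forward orbit of $0$ never returns to $0$, so after one step it genuinely coincides with a rotation orbit; and your exact formula $h(z_n)=h_{2n+\tilde m+1}$ would require $E(z_n)\in S_{2n+\tilde m}$ (i.e.\ $z_n\in E^{-1}(S_{2n+\tilde m})$), whereas the hypothesis only gives $z_n\in S_{2n+\tilde m}$---but all you actually need is $h(z_n)\to\infty$, which follows from $|E(z_n)|=|z_n|\to 0$ just as in the paper.
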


\begin{proof}
Let $n \in \N$.  Note that 
\begin{equation}
\{F_\kappa^j(z_n) : 1 \leq j \leq h(z)\} \subset \mathcal{O}_{F_\kappa}^+(z_n).
\end{equation}
Lemma \ref{Split} tells us that for all $1 \leq j \leq h(z_n)$,
\begin{equation*}
F_\kappa^j(z_n) = E(z_n) + F_\kappa^j(0).
\end{equation*}
Therefore,
\begin{equation}
\label{dist}
|F_\kappa^j(z_n) - F_\kappa^j(0)| = |E(z_n)| = |z_n|.
\end{equation}
Let $H \in \N$.  Then there exists an $N \in \N$ such that $h(z_n) \geq H$ for all $n \geq N$, and thus \eqref{dist} holds for all $1 \leq j \leq H$.  Now let $\varepsilon > 0$ be small.  Then there exists an $N' \in \N$ such that for all integers $n \geq N'$ such that 
\begin{equation}
\label{bound}
|z_{n}| < \varepsilon.
\end{equation}
Set $N^* = \max\{N,N'\}$.  Then for all integers $n \geq N^*$, both \eqref{dist} holds for all $1 \leq j \leq H$ and \eqref{bound} holds.  Hence, for all $n \geq N^*$ we have
\begin{equation*}
|F_\kappa^j(z) - F_\kappa^j(0)| < \varepsilon,
\end{equation*}
for all $1 \leq j \leq H$.  Since $H$ and $\varepsilon$ are independent and arbitrary, we conclude that the sequence $\left( \mathcal{O}_{F_\kappa}^+(z_n) \right)_{n \in \N}$ accumulates on the set $\mathcal{O}_{F_\kappa}^+(0)$.  

By Proposition \ref{FisG}, $F_\kappa$ is a 2-IET everywhere on the interval $[-1,\lambda]$  except on the preimages of $0$, since $F_\kappa(0) = -\eta = 1 - \lambda$, contrary to $F_\kappa(x) = x + \lambda$ for $x \in [-1,0)$ and $F_\kappa(x) = x - 1$ for $x \in (0,\lambda)$.  

Therefore $F_\kappa$ is conjugate to an irrational rotation almost everywhere (with respect to one-dimensional Lebesgue measure), since $\lambda = \Phi$ is irrational.  In particular, since $\lambda$ is irrational and $\eta = 1 - \lambda$, we know, by for example Lemma \ref{bound1}, that $F_\kappa^j(0) = F_\kappa^{j-1}(-\eta)$ is bounded away from $0$ for all integers $j > 0$, so $F_\kappa^j(0) \neq 0$ for any $j > 0$.  

Hence, the orbit of $F_\kappa(0) = -\eta$ under $F_\kappa$ is also the orbit under an irrational rotation, and thus the orbit of $0$ is dense in the interval $[-1,\lambda]$, i.e.
\begin{equation*}
\ol{\mathcal{O}_{F_\kappa}^+(0)} = [-1,\lambda].
\end{equation*}
Therefore, the sequence $\left( \mathcal{O}_{F_\kappa}^+(z_n) \right)_{n \in \N}$ accumulates on the interval $[-1,\lambda]$.
\end{proof}

\begin{remark}
Although extending Proposition \ref{Application} to periodic continued fractions $\lambda$ should follow from a similar strategy to the proof used here, an extension to aperiodic continued fractions seems to require nothing short of assuming/proving that every TCE has at least one periodic point in its `renormalizable domain' $U(\kappa)$.
\end{remark}

\section{Discussion}

Translated cone exchanges, introduced first in \cite{AGPR} and investigated in \cite{P19,PR18}, are an interesting and largely unexplored family of parametrised PWIs. They contain an embedding of a simple IET on the baseline and as such they are an interesting tool to understand more general PWIs by gaining leverage from known results for IETs.
In this paper we go beyond results in \cite{AGPR,P19,PR18} to show that for a dense subset of an open set in the parameter space of TCEs there is a mapping ($\kappa \mapsto \kappa'$ in Theorem \ref{Renorm}) that determines a renormalization scheme for the first return map $R_\kappa$ of $F_\kappa$ to the vertex $0$ of the middle cone $C_c$.  This helps us describe the small-scale, long-term behaviour of $F_\kappa$ near the baseline $[-1,\lambda]$ via the large-scale, short-term behaviour of $F_{\kappa''}$ with $\kappa'' = (\alpha, \tau, \lambda'', \eta'', 1)$, $\lambda'' = g^{2k}(\lambda)$, a large enough integer $k > 0$ and some suitably chosen $\eta''$ described by \eqref{neweta}.  Proposition \ref{Application} is an example of this, where a periodic disk of small period for $F_\kappa$ and the periodicity of the continued fraction coefficients of $\lambda = \Phi$ give rise to an countable collection of periodic disks of arbitrarily high period clustering on $[-1,\lambda]$, through the renormalizability established by Theorem \ref{Renorm}.

Although these results give a glimpse into the dynamics for orbits close to the baseline, there remains a lot more to do to understand the dynamics of these TCEs near general points in exceptional set, but this seems to be a far more complex task to undertake, especially as the dynamics near the baseline primarily consists of horizontal translations, whereas in general the effect of the rotations will be inextricably linked to translations.

\subsection*{Acknowledgements}

We thank Pedro Peres and Arek Goetz for discussions about this research. NC and PA thank the Mittag-Leffler Institute for their hospitality and support to visit during the ``Two Dimensional Maps'' programme of early 2023.

\subsection*{Funding Acknowledgement}
This work was supported by the Engineering and Physical Sciences Research Council.

\subsection*{Data Access Statement}
For the purpose of open access, the authors have applied a Creative Commons Attribution (CC BY) licence to any Author Accepted Manuscript version arising from this submission.

No new data were generated or analysed during this study.  The figures in this study were produced by python programming code written by NC.  This code, namely the `pyTCE' program, is publicly available at the following link:

\centering https://github.com/NoahCockram/pyTCE/tree/main

\end{document}